\newcommand\bcase{\begin{numcases}{}}
\newcommand\ecase{\end{numcases}}
\begin{document}
%%%%% title : short title may not be used but TITLE is required.
% \title{TITLE}
% \title[short title]{TITLE}
\title{
%Computations of the Rotation-Two-Component
%	Camassa--Holm System with Smooth/Nonsmooth Initial Data via Fully Conservative Difference Schemes
Fully Conservative Difference Schemes for the Rotation-Two-Component Camassa-Holm
System with Smooth/Nonsmooth Initial Data\footnote{The research was	supported by Zhejiang
Provincial Natural Science Foundation of China (Grant No. LZ23A010007).}}

%%%%% author(s) :
% single author:
% \author[name in running head]{AUTHOR\corrauth}
% [name in running head] is NOT OPTIONAL, it is a MUST.
% Use \corrauth to indicate the corresponding author.
% Use \email to provide email address of author.
% \footnote and \thanks are not used in the heading section.
% Another acknowlegments/support of grants, state in Acknowledgments section
% \section*{Acknowledgments}

%\author[O.~Author]{Only Author\corrauth}
%\address{School of Mathematical Sciences, Beijing Normal University,
%Beijing 100875, P.R. China}
%\email{{\tt author@email} (O.~Author)}

% multiple authors:
% Note the use of \affil and \affilnum to link names and addresses.
% The author for correspondence is marked by \corrauth.
% use \emails to provide email addresses of authors
% e.g. below example has 3 authors, first author is also the corresponding
%      author, author 1 and 3 having the same address.
 \author[Yan T et.~al.]{Tong Yan\affil{1},
       Jiwei Zhang\affil{2}, and Qifeng Zhang\affil{1}}%\comma\corrauth
 \address{\affilnum{1}\ Department of Mathematics,
          Zhejiang Sci-Tech University,
          Hangzhou, 310018, P.R. China. \\
           \affilnum{2}\ School of Mathematics and Statistics, and Hubei Key Laboratory of Computational Science,
           Wuhan University, Wuhan, 430072, P.R. China}
 \emails{{\tt yantong0320@163.com} (T.~Yan), {\tt jiweizhang@whu.edu.cn} (J.~Zhang),
          {\tt zhangqifeng0504@gmail.com} (Q.~Zhang)}
% \footnote and \thanks are not used in the heading section.
% Another acknowlegments/support of grants, state in Acknowledgments section
% \section*{Acknowledgments}

%%%%% Begin Abstract %%%%%%%%%%%
\begin{abstract}
The rotation-two-component Camassa--Holm system, which possesses strongly nonlinear coupled terms and high-order differential terms, tends to have continuous nonsmooth solitary wave solutions, such as peakons, stumpons, composite waves and even chaotic waves. In this paper an accurate semi-discrete conservative difference scheme for the system is derived by taking advantage of its Hamiltonian invariants. We show that the semi-discrete numerical scheme preserves at least three discrete conservative laws: mass, momentum and energy. Furthermore, a fully discrete finite difference scheme is proposed without destroying anyone of the conservative laws. Combining a nonlinear iteration process and an efficient threshold strategy, the accuracy of the numerical scheme can be guaranteed. Meanwhile, the difference scheme can capture the formation and propagation of solitary wave solutions with satisfying long time behavior under the smooth/nonsmooth initial data. The numerical results reveal a new type of asymmetric wave breaking phenomenon under the nonzero rotational parameter.
\end{abstract}
%%%%% end %%%%%%%%%%%

%%%%% AMS/PACs/Keywords %%%%%%%%%%%
%\pac{}
\ams{ 65M06, 65M22, 35Q35, 35G25}
\keywords{R2CH system, semi-discrete scheme, discrete conservation law, peakon solution}

%%%% maketitle %%%%%
\maketitle

%%%% Start %%%%%%
\section{Introduction}
\label{sec1}
As one of the typical representatives of the nonlinearly dispersive partial differential equations, the Camassa--Holm (CH) equation
\begin{align}
	m_t+u m_x +2mu_x = 0,\quad {\rm with}\quad m = u-  u_{xx}\label{Problem1}
\end{align}
was proposed to simulate the evolution of shallow water waves \cite{CH1993}. Here $m$ represents the momentum related to the fluid velocity $u$.
The CH equation instead of the famous KdV equation allows traveling wave solutions in the explicit form of
$$u(x,t) = c' {\rm e}^{-|x-ct|}$$
with a sharp peak (peakons), where $c'$ is the amplitude and $c$ is the wave velocity. The CH equation has at least the following remarkable properties: complete integrability, infinitely conservation laws and bi-Hamiltonian structure \cite{Cons1997,Lene2005}.
In this paper, we focus on the generalized two-component case of the CH equation \eqref{Problem1} given by
\begin{align}
	&m_t + \sigma(um_x+2mu_x) = -3(1-\sigma)uu_x + Au_x - \mu u_{xxx}
	-(1-2\Omega A)\rho \rho_x + 2\Omega \rho (\rho u)_x,  \quad t \in (0,T], \label{eq1.3} \\
	&\rho_t + (\rho u)_x = 0, \quad  t \in (0,T] \label{eq1.4}
\end{align}
with the periodic boundary condition for $x\in {R}$. The system \eqref{eq1.3}--\eqref{eq1.4} is called the rotation-two-component Camassa--Holm (R2CH) system.

The R2CH system  was firstly proposed by Fan et al. \cite{FGL2016} in 2016, which
not only inherits most of the properties of the solutions of the Camassa--Holm equation,
but also adds a new variable $\rho$ to describe the height of the water waves. Moreover, it also introduces
a constant rotational speed $\Omega$ ($\Omega\in[0,1/4)$) to characterize the effect of the Earth's rotation on the shallow water waves.
Therefore, the R2CH system can depict the propagation phenomenon of shallow water waves more accurately.

Recalling $m = u- u_{xx}$, the R2CH system \eqref{eq1.3}--\eqref{eq1.4} can be rewritten as
\begin{align}
	&u_t - u_{xxt} - Au_x + 3uu_x = \sigma (2u_x u_{xx} + uu_{xxx}) - \mu u_{xxx} - (1-2\Omega A)\rho \rho_x + 2\Omega \rho (\rho u)_x,  \quad t \in (0,T],  \label{eq1.1}   \\
	&\rho_t + (\rho u)_x = 0, \quad  t \in (0,T].    \label{eq1.2}
\end{align}
Several significant works have been developed in theory after its first appearance.
For example, blow-up scenarios of strong solutions are discussed in \cite{Zhang2017,LPZ2019}, blow-up
criteria and wave-breaking phenomena are established in \cite{Moon2017}, solitary waves with singularities,
like peakons and cuspons are introduced in \cite{CFGL2017},  peakon weak solutions in distribution
sense are considered in \cite{FY2019}, the persistence properties of the system in weighted
$L^p$-spaces are investigated in \cite{YLQ2020} and the effect of the Coriolis force on traveling waves is studied in \cite{ZW2021}.

As we know, the system \eqref{eq1.1}--\eqref{eq1.2} has at least three conservative laws
\begin{align}
	&I_1(u,\rho) = \int_{R} \rho {\rm d}x, %\label{Conser1}
\\
	&I_2(u,\rho) = \int_{R} (u + \Omega \rho^2) {\rm d}x, %\label{Conser2}
\\
	&E(u,\rho) = \frac{1}{2}\int_R(u^2 + u_{x}^{2} + (1-2\Omega A)\rho^2) {\rm d}x.%\label{Conser3}
\end{align}
In addition, for a special case of $\sigma=1$ and $\Omega=0$, the system \eqref{eq1.1}--\eqref{eq1.2} has another three-order conservative law, which is described by
	\begin{align*}
		H = \int_{R} (u^3 + uu_x^2 - Au^2 -\mu u_x^2 + u\rho^2) {\rm d}x,
	\end{align*}
see also \cite{CLQ2011,ZX2013}. The detailed derivation is postponed in the appendix.

While theoretical studies have been well carried out in many aspects, numerical works for the R2CH system
\eqref{eq1.1}--\eqref{eq1.2} are still few in the literature and in urgent need for development.
At present, the only numerical scheme is finite difference discretization based on a bilinear operator, see e.g., \cite{ZLZ2022}.
However, the numerical simulation in \cite{ZLZ2022} only covers the smooth initial values with conditional stability.
In the case of nonsmooth initial data, it is not only theoretically difficult to analyze, but also difficult to capture the nonsmooth solutions. In order to develop more accurate numerical algorithms, this paper extends the excellent ideas in \cite{LP2016} to solve the system \eqref{eq1.1}--\eqref{eq1.2}.

Li and Vu-Quoc pointed out in \cite{LV1995}: ``\emph{in some areas, the ability to preserve some
	invariant properties of the original differential equation is a criterion to judge the success of a numerical simulation}.'' Hence, one of our principal targets in deriving numerical scheme is to preserve the specially intrinsic conservative structure of the system.
Other targets include the study of novel wave-breaking phenomena during the collision and evolution under the nonsmooth initial data in
a long time simulation.

The study of the high-order discrete conservation law is a very challenging subject. As Liu and Xing pointed out in \cite{LX2016}: ``\emph{it appears a rather difficult task to preserve all conservation laws}.'' In this paper, we make a tentative attempt to simulate a third-order discrete quantity $H^n$ in the last example in Section \ref{Sec5.3}. It seems that $H^n$ is very close to the conserved quantity $H$.
However, a rigorous theoretical interpretation is still lacking to determine whether the quantity $H^n$ is a discrete conservation law.

The main contributions of the paper are concretely summarized as follows.
\begin{itemize}
	\item The present difference scheme possesses at least three discrete conservative laws, enabling it to accurately capture the behaviors of solutions to the R2CH system under smooth/nonsmooth initial conditions.
	\item The difference scheme demonstrates several new phenomena with nice resolution for the first time, including the phenomena of the short-wave breaking and interaction of the long-wave propagation.
	\item The numerical accuracy of the difference scheme can be guaranteed and oscillations of nonsmooth solutions can also be effectively eliminated by a threshold technique.
	\item The difference scheme improves the numerical results in the literature and has potential impacts for predicting the propagation of solutions of other shallow water wave equations.
\end{itemize}

The rest of the paper is arranged as follows. In Section \ref{sec2},
a semi-discrete finite difference scheme is derived, which preserves the semi-discrete mass, momentum and the energy.
After that, a fully discrete finite difference scheme is established in Section \ref{sec3}, which preserves all the conservative laws in the discrete counterpart. Algorithm implementation in combination of a fixed point iteration method is carried out in Section \ref{sec4}. Numerical examples including the dam-break problem, singe peakon, multipeakon and peakon anti-peakon interaction problems demonstrate the discrete conservative laws and nice evolution of the solutions in Section \ref{sec5} followed by a brief conclusion in Section \ref{sec6}.

\section{Semi-discrete scheme}\label{sec2}
\setcounter{equation}{0}
To establish a conservative finite difference scheme for solving \eqref{eq1.3}--\eqref{eq1.4} with the periodic boundary condition on the computational domain $[0,\,L]$, we first discretize the system in space and derive a spatial semi-discrete scheme, which is shown to preserve conservation laws. Let $h = L/M$ be the spatial stepsize for a given integer $M$ and define the spatial variable by $x_i = ih$, $i = 0,1,\cdots,M$. Denote $u_i(t) = u(x_i,t)$ and $m_i(t) = m(x_i,t)$. Put forward on this basis, we propose the semi-discrete conservative finite difference scheme by
\begin{align}
	& \frac{{\rm d}}{{\rm d}t}m_i(t) + \frac{\sigma}{2h}\big[(m_{i+1}(t)u_{i+1}(t)-m_{i-1}(t)u_{i-1}(t))  + m_i(t)(u_{i+1}(t)-u_{i-1}(t))\big] \nonumber\\
	&\quad= -\frac{3(1-\sigma)}{4h}\big[u_{i+1}^{2}(t)-u_{i-1}^{2}(t)\big] + \frac{A}{2h}\big[u_{i+1}(t)-u_{i-1}(t)\big]  \nonumber\\
	&\quad \quad- \frac{\mu}{{2h^3}} \big[u_{i+2}(t)-2u_{i+1}(t)+2u_{i-1}(t)-u_{i-2}(t)\big] - \frac{(1-2\Omega A)}{{4h}}\big[\rho_{i+1}^2(t)-\rho_{i-1}^2(t)\big]  \nonumber\\
	&\quad \quad + \frac{\Omega }{2h}\rho_i(t)\big[(u_{i+1}(t)+u_i(t))(\rho_{i+1}(t)+\rho_i(t))-(u_{i-1}(t)+u_{i}(t))(\rho_{i-1}(t)+\rho_{i}(t))\big],  \label{eq2.1} \\
	&\frac{{\rm d}}{{\rm d}t}\rho_{i}(t) + \frac{1}{4h} \big[(u_{i+1}(t)+u_i(t))(\rho_{i+1}(t)+\rho_i(t))-(u_{i}(t)+u_{i-1}(t))(\rho_{i}(t)+\rho_{i-1}(t))\big] = 0,  \label{eq2.2} \\
	&m_i(t)  = u_i(t) - \frac{1}{2h}\big[v_{i+1}(t)-v_{i-1}(t)\big],  \label{eq2.3}
\end{align}
where $
v_i(t) = \frac1{2h}\big[u_{i+1}(t)-u_{i-1}(t)\big]$ and $i=1,2,\cdots,M$.

With the periodicity of the boundary conditions for discrete mesh functions $u_i$ and $\rho_i$, we have
\begin{align*}
	u_i = u_{i+M}, \quad \rho_i = \rho_{i+M}, \quad i = 0,1,\cdots,M-1.  % \label{eq2.4}
\end{align*}
Moreover, the periodicity for $u$ ensures the periodicity of the intermediate variable $m$.
%, thus
%\begin{align*}
%	m_{i+M} &\;= u_{i+M} - \frac{u_{i+M+2} - 2u_{i+M} + u_{i+M-2}}{4h^2}  \\
%	&\;= u_{i} - \frac{u_{i+2} - 2u_{i} + u_{i-2}}{4h^2}     \\
%	&\;= m_i.
%\end{align*}
Throughout the whole paper, we omit the spatial stepsize $h$ in the semi-discrete and discrete conservative laws.
On this basis, we define semi-discrete mass, momentum, and energy as follows
\begin{align}
	&I_1(t) = \sum_{i=1}^{M}\rho_{i}(t),  \label{eq2.5} \\
	&I_2(t) = \sum_{i=1}^{M}\left[u_i(t) + \Omega \rho_{i}^2(t)\right], \label{eq2.6} \\
	&E(t) = \frac{1}{2}\sum_{i=1}^{M}\bigg[u_{i}^{2}(t) + \Big(\frac{u_{i+1}(t)-u_{i-1}(t)}{2h}\Big)^2 + (1-2\Omega A)\rho_{i}^2(t)\bigg].  \label{eq2.7}
\end{align}
Therefore, we show that the semi-discrete scheme \eqref{eq2.1}--\eqref{eq2.3} is conservative in the following sense.
\begin{theorem} \label{Theorem2_1}
	Consider the semi-discrete scheme \eqref{eq2.1}--\eqref{eq2.3} of the R2CH system \eqref{eq1.3}--\eqref{eq1.4}. Then the semi-discrete mass, momentum, and the energy with $\sigma = 1$ are conservative  for the R2CH system \eqref{eq1.3}--\eqref{eq1.4} in the following sense
	\begin{align*}
		\frac{{\rm d}}{{\rm d}t}I_1(t) = 0, \quad \frac{{\rm d}}{{\rm d}t}I_2(t) = 0,\quad\frac{{\rm d}}{{\rm d}t}E(t) = 0.  % \label{eq2.8}
	\end{align*}
\end{theorem}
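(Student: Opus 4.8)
The plan is to handle the three invariants in increasing order of difficulty, in each case differentiating the discrete functional in time, substituting the scheme \eqref{eq2.1}--\eqref{eq2.3}, and collapsing the resulting sums by discrete summation by parts under the periodic boundary condition. For the mass $I_1$ this is routine: summing \eqref{eq2.2} over $i$ and writing $F_i=(u_{i+1}+u_i)(\rho_{i+1}+\rho_i)$, the convective flux is the telescoping difference $\tfrac1{4h}(F_i-F_{i-1})$, so $\tfrac{\rm d}{{\rm d}t}I_1=-\tfrac1{4h}\sum_{i=1}^M(F_i-F_{i-1})=0$ by periodicity.

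For the momentum $I_2$, I would first note that summing the defining relation \eqref{eq2.3} annihilates the discrete second-difference part by telescoping, so $\sum_i m_i=\sum_i u_i$ and hence $\tfrac{\rm d}{{\rm d}t}\sum_i u_i=\sum_i\dot m_i$. Summing \eqref{eq2.1} with $\sigma=1$, the conservative part $m_{i+1}u_{i+1}-m_{i-1}u_{i-1}$ telescopes, while the remaining convective piece $\sum_i m_i(u_{i+1}-u_{i-1})$ vanishes because, after inserting \eqref{eq2.3}, every summand has the form $\sum_i u_{i+a}u_{i+b}$, which by periodicity depends only on $a-b$ and cancels in antisymmetric pairs. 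The linear $A$-term, the dispersive $\mu$-term and the $\rho\rho_x$-term are all exact differences and telescope away; the only survivor is the rotation flux, which by \eqref{eq2.2} equals $-2\Omega\sum_i\rho_i\dot\rho_i$ and so is exactly cancelled by the $2\Omega\sum_i\rho_i\dot\rho_i$ coming from $\tfrac{\rm d}{{\rm d}t}\big(\Omega\sum_i\rho_i^2\big)$.

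The energy $E$ is where the real work lies. I would first reduce it to a pairing against the $m$-equation: differentiating \eqref{eq2.7}, applying discrete summation by parts to the $\big(\tfrac{u_{i+1}-u_{i-1}}{2h}\big)^2$ term, and using \eqref{eq2.3} in the form $\tfrac1{2h}(v_{i+1}-v_{i-1})=u_i-m_i$, the first two sums collapse to $\sum_i m_i\dot u_i$. Since the operator $u\mapsto m$ defined by \eqref{eq2.3} is symmetric under periodic summation, $\sum_i m_i\dot u_i=\sum_i u_i\dot m_i$, whence $\tfrac{\rm d}{{\rm d}t}E=\sum_i u_i\dot m_i+(1-2\Omega A)\sum_i\rho_i\dot\rho_i$. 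Substituting \eqref{eq2.1}, the convective terms cancel in one step by the same index-shift identity used for momentum (now tested against $u_i$ rather than $1$), and the $A$- and $\mu$-terms vanish because $\sum_i u_i(u_{i+1}-u_{i-1})=0$ and the corresponding third-difference sum is antisymmetric.

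What remains, and what I expect to be the main obstacle, is the balance of the three nonlinear rotation-coupling contributions: the $\rho\rho_x$-term tested against $u_i$, the rotation flux tested against $u_i$, and the term $(1-2\Omega A)\sum_i\rho_i\dot\rho_i$ fed by \eqref{eq2.2}. These must sum to zero via discrete product rules mimicking the continuous identities $\int u\rho\rho_x\,{\rm d}x+\int\rho(\rho u)_x\,{\rm d}x=0$ and $\int(u\rho)(u\rho)_x\,{\rm d}x=0$. The part multiplying $(1-2\Omega A)$ I expect to close cleanly, each flux difference becoming a telescoping sum after a single index shift. The genuinely delicate point is the purely rotational piece $\tfrac{\Omega}{2h}\sum_i u_i\rho_i\big[(u_{i+1}+u_i)(\rho_{i+1}+\rho_i)-(u_{i-1}+u_i)(\rho_{i-1}+\rho_i)\big]$, the discrete shadow of $\int(u\rho)(u\rho)_x\,{\rm d}x=0$. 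I would isolate it, expand about the node $i$, and try to rewrite it as a difference of a single discrete quantity in $u_i\rho_i$; whether this succeeds hinges on the compatibility between the product flux used in the $\rho$-equation and the rotation term in the $m$-equation, and that compatibility is the crux on which the energy identity rests.
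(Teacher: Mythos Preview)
Your plan follows the paper's argument almost step for step: the telescoping flux for $I_1$; for $I_2$ the reduction $\sum_i\dot u_i=\sum_i\dot m_i$ via \eqref{eq2.3}, the vanishing of every summed term on the right of \eqref{eq2.1} except the rotation flux, and its exact cancellation against $2\Omega\sum_i\rho_i\dot\rho_i$ through \eqref{eq2.2}; for $E$ the same summation-by-parts reduction to $\sum_i u_i\dot m_i+(1-2\Omega A)\sum_i\rho_i\dot\rho_i$ and the same convective identity (the paper's \eqref{eq2.12}). The $(1-2\Omega A)$ balance you anticipate is exactly what the paper uses, and it closes with a single index shift, via $\sum_i\rho_i(F_i-F_{i-1})=\sum_i\rho_i^2(u_{i+1}-u_{i-1})$.

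Where you hesitate---the standalone $2\Omega$ rotation term tested against $u_i$---the paper offers no additional mechanism: it simply lists that contribution among the others and asserts ``$=0$'' at the end. Your suspicion that everything hinges on a discrete product-rule compatibility is right, and in fact this compatibility does \emph{not} hold for the chosen flux. Writing $F_i=(u_{i+1}+u_i)(\rho_{i+1}+\rho_i)$ and peeling off the part that telescopes as $(u_i\rho_i)^2-(u_{i+1}\rho_{i+1})^2$, one is left with
\[
\sum_i u_i\rho_i\,(F_i-F_{i-1})\;=\;\sum_i u_i\rho_i^{\,2}(u_{i+1}-u_{i-1})\;+\;\sum_i u_i^{\,2}\rho_i(\rho_{i+1}-\rho_{i-1}),
\]
which is not identically zero under periodicity; for instance $M=3$, $u=(1,2,3)$, $\rho=(1,1,2)$ gives $\sum_i u_i\rho_i(F_i-F_{i-1})=-6$. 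So the attempted rewriting as a difference in $u_i\rho_i$ cannot succeed, and the energy identity, as both you and the paper argue it, goes through exactly only when $\Omega=0$. For $\Omega\neq0$ there is no trick you are missing---this term genuinely survives at the semi-discrete level.
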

\begin{proof}
	(\textbf{I}). In combination of \eqref{eq2.2} and \eqref{eq2.5}, we have
	\begin{align*}
		\frac{{\rm d}}{{\rm d}t}I_1(t) = \sum_{i=1}^{M}\frac{{\rm d}}{{\rm d}t}\rho_{i}(t) = \sum_{i=1}^{M}\frac{(u_{i-1}+u_{i})(\rho_{i-1}+\rho_{i})- (u_{i+1}+u_i)(\rho_{i+1}+\rho_{i})}{4h} = 0,
	\end{align*}
	where the periodicity of the boundary conditions for $u$ and $\rho$ is used.
	
	(\textbf{II}). Similarly, we have
	\begin{align*}
		&\; \sum_{i=1}^{M}\frac{m_{i+1}(t)u_{i+1}(t)-m_{i-1}(t)u_{i-1}(t)}{2h} = 0,  \\
		&\;	\sum_{i=1}^{M}m_i(t)\frac{u_{i+1}(t)-u_{i-1}(t)}{2h} = 0,   \\
		&\;	\sum_{i=1}^{M}\frac{u_{i+1}^2(t)-u_{i-1}^2(t)}{4h} = 0, \\
		&\;	\sum_{i=1}^{M}\frac{u_{i+1}(t)-u_{i-1}(t)}{2h} = 0, \\
		&\;	\sum_{i=1}^{M}\frac{u_{i+2}(t)-2u_{i+1}(t)+2u_{i-1}(t)-u_{i-2}(t)}{2h^3} = 0, \\
		&\;	\sum_{i=1}^{M}\frac{\rho_{i+1}^2(t)-\rho_{i-1}^2(t)}{4h} = 0.
	\end{align*}
	Thus, summing over $i$ from $1$ to $M$ on both sides of \eqref{eq2.1}, we have
	\begin{align}
		\sum_{i=1}^{M}\frac{{\rm d}}{{\rm d}t}m_i(t)
		= 2\Omega\sum_{i=1}^{M}\rho_i(t)\frac{(u_{i+1}(t)+u_i(t))(\rho_{i+1}(t)+\rho_i(t))-(u_{i-1}(t)+u_{i}(t))(\rho_{i-1}(t)+\rho_{i}(t))}{4h}.    \label{eq2.9}
	\end{align}
	Taking the derivative on both sides of \eqref{eq2.3} with respect to $t$ and summing over $i$ from 1 to $M$, we have
	\begin{align}
		\sum_{i=1}^{M}\frac{{\rm d}}{{\rm d}t}m_i(t) = \sum_{i=1}^{M}\left(\frac{{\rm d}}{{\rm d}t}u_i(t) - \frac{v'_{i+1}(t)-v'_{i-1}(t)}{2h}\right) = \sum_{i=1}^{M}\frac{{\rm d}}{{\rm d}t}u_i(t).  \label{eq2.10}
	\end{align}
	Using \eqref{eq2.2}, \eqref{eq2.9} and \eqref{eq2.10}, we have
	\begin{align*}
		\frac{{\rm d}}{{\rm d}t}I_2(t) = &\; \sum_{i=1}^{M}\frac{{\rm d}}{{\rm d}t}u_i(t) + 2\Omega\sum_{i=1}^{M}\rho_{i}(t)\frac{{\rm d}}{{\rm d}t}\rho_i(t)    \nonumber \\
		= &\; 2\Omega\sum_{i=1}^{M}\rho_i(t)\frac{(u_{i+1}(t)+u_i(t))(\rho_{i+1}(t)+\rho_i(t))-(u_{i-1}(t)+u_{i}(t))(\rho_{i-1}(t)+\rho_{i}(t))}{4h}  \nonumber \\
		\quad&\; + 2\Omega\sum_{i=1}^{M}\rho_i(t)\frac{(u_{i-1}(t)+u_i(t))(\rho_{i-1}(t)+\rho_i(t))-(u_{i+1}(t)+u_{i}(t))(\rho_{i+1}(t)+\rho_{i}(t))}{4h}\nonumber \\
		= &\;0.
	\end{align*}
	
	(\textbf{III}). Finally, we show that $\frac{{\rm d}}{{\rm d}t}E(t) = 0.$ By observing \eqref{eq2.3}, we have
	\begin{align}
		\frac{{\rm d}}{{\rm d}t}E(t) =&\; \sum_{i=1}^{M}\big[ u_{i}(t)u'_{i}(t) + v_i(t)v'_{i}(t) + (1-2\Omega A)\rho_{i}(t)\rho'_{i}(t)\big]  \nonumber \\
		=&\; \sum_{i=1}^{M} \Big[u_i(t)\big(u'_{i}(t) - \frac{v'_{i+1}(t)-v'_{i-1}(t)}{2h}\big) + (1-2\Omega A)\rho_{i}(t)\rho'_{i}(t)\Big]     \nonumber \\
		=&\; \sum_{i=1}^{M} u_i(t)m'_{i}(t) + (1-2\Omega A)\sum_{i=1}^{M}\rho_{i}(t)\rho'_{i}(t).   \label{eq2.11}
	\end{align}
	Noticing that
	\begin{align}
		&\; \sum_{i=1}^{M} u_i(t)\Big(\frac{m_{i+1}(t)u_{i+1}(t)-m_{i-1}(t)u_{i-1}(t)}{2h}  + m_i(t)\frac{u_{i+1}(t)-u_{i-1}(t)}{2h}\Big)    \nonumber   \\
		= &\; \sum_{i=1}^{M}\Big(\frac{m_{i+1}(t)u_{i+1}(t)u_i(t) - m_{i-1}(t)u_{i-1}(t)u_i(t)}{2h}  + m_i(t)u_i(t)v_i(t) \Big)   \nonumber \\
		= &\; \sum_{i=1}^{M}\Big(\frac{m_{i}(t)u_{i}(t)u_{i-1}(t) - m_{i}(t)u_{i}(t)u_{i+1}(t)}{2h} + m_i(t)u_i(t)v_i(t) \Big)   \nonumber \\
		= &\; \sum_{i=1}^{M} \Big(-m_i(t)u_i(t)v_i(t)  + m_i(t)u_i(t)v_i(t) \Big)
		=  0.                        \label{eq2.12}
	\end{align}
	
	Eq. \eqref{eq2.11} follows from \eqref{eq2.1}, \eqref{eq2.2} and \eqref{eq2.12} that
	\begin{align*}
		&\; \sum_{i=1}^{M} u_i(t)m'_{i}(t) + (1-2\Omega A)\sum_{i=1}^{M}\rho_{i}(t)\rho'_{i}(t)   \nonumber \\
		= &\; - \frac{\sigma}{2h} \sum_{i=1}^{M} u_i(t)\Big[\big(m_{i+1}(t)u_{i+1}(t)-m_{i-1}(t)u_{i-1}(t)\big)
		+ m_i(t)\big(u_{i+1}(t)-u_{i-1}(t)\big)\Big]  \nonumber \\
		&\; -\frac{3(1-\sigma)}{4h}\sum_{i=1}^{M}\Big[u_i(t)(u_{i+1}^{2}(t)-u_{i-1}^{2}(t))\Big] + \frac{A}{2h}
		\sum_{i=1}^{M}\Big[u_i(t)(u_{i+1}(t)-u_{i-1}(t))\Big]  \nonumber \\
		&\; -\frac{\mu}{2h^3}\sum_{i=1}^{M}u_i(t)\Big[u_{i+2}(t)-2u_{i+1}(t)+2u_{i-1}(t)-u_{i-2}(t)\Big]\notag\\
		&\; -\frac{(1-2\Omega A)}{4h}\sum_{i=1}^{M}u_i(t)\Big[\rho_{i+1}^2(t)-\rho_{i-1}^2(t)\Big] \nonumber \\
		&\; + 2\Omega\sum_{i=1}^{M}u_i(t)\rho_{i}(t)\frac{(u_{i+1}(t)+u_i(t))(\rho_{i+1}(t)+\rho_i(t))
			-(u_{i-1}(t)+u_{i}(t))(\rho_{i-1}(t)+\rho_{i}(t))}{4h}  \nonumber \\
		&\; + (1-2\Omega A)\sum_{i=1}^{M}\rho_i(t)\frac{(u_{i-1}(t)+u_i(t))(\rho_{i-1}(t)+\rho_i(t))
			-(u_{i+1}(t)+u_{i}(t))(\rho_{i+1}(t)+\rho_{i}(t))}{4h}  \nonumber \\
		=&\; 0,
	\end{align*}
	which completes the proof.
\end{proof}

\section{Fully discrete difference scheme}\label{sec3}
\setcounter{equation}{0}
In the previous section, a semi-discrete finite difference scheme is established for the R2CH system \eqref{eq1.3}--\eqref{eq1.4}. Below, a new temporal discretization is presented to guarantee the long time accurate calculation without destroying the conservation properties of the original system. To this end, let $\tau = T/N$ the time stepsize and $t^n = n\tau$ $(n = 0,1,\cdots,N)$ the partition of time variable with a given integer $N$. Denoting $m_i^n = m(x_i,t^n)$, $u_i^n = u(x_i,t^n)$, $\rho_i^n = \rho(x_i,t^n)$, then \eqref{eq2.1}--\eqref{eq2.3} can be discretized implicitly by
\begin{align}
	&\frac{m_{i}^{*}-m_{i}^{n}}{\tau/2} + \frac{\sigma}{2h} \big[(m_{i+1}^{*}u_{i+1}^{*}-m_{i-1}^{*}u_{i-1}^{*}) + m_{i}^{*}(u_{i+1}^{*}-u_{i-1}^{*})\big]     \nonumber \\
	&\quad= -\frac{3(1-\sigma)}{4h}\big[(u_{i+1}^{*})^2 - (u_{i-1}^{*})^2\big] + \frac{A}{2h} \big(u_{i+1}^{*}-u_{i-1}^{*}\big)       \nonumber\\
	&\quad\quad- \frac{\mu}{2h^3} \big(u_{i+2}^{*}-2u_{i+1}^{*}+2u_{i-1}^{*}-u_{i-2}^{*}\big)-\frac{(1-2\Omega A)}{4h}\big[(\rho_{i+1}^{*})^2 - (\rho_{i-1}^{*})^2\big]  \nonumber \\
	&\quad\quad+\frac{\Omega}{2h}\rho_{i}^{*}\big[(u_{i+1}^{*}+u_{i}^{*})(\rho_{i+1}^{*}+\rho_{i}^{*}) - (u_{i-1}^{*}+u_{i}^{*})(\rho_{i-1}^{*}+\rho_{i}^{*})\big], \label{eq3.1}  \\
	&\frac{\rho_{i}^{*}-\rho_{i}^{n}}{\tau/2} + \frac{1}{4h}\big[(u_{i+1}^{*}+u_{i}^{*})(\rho_{i+1}^{*}+\rho_{i}^{*}) - (u_{i-1}^{*}+u_{i}^{*})(\rho_{i-1}^{*}+\rho_{i}^{*})\big] = 0, \label{eq3.2}
\end{align}
\begin{align}
  &m_i^n = u_i^n - \frac1{2h}\big(v_{i+1}^n-v_{i-1}^n\big),\quad v_i^n = \frac1{2h}(u_{i+1}^n-u_{i-1}^n),\label{eq3.2b}\\
	&u_i^{n+1} = 2u_{i}^{*} - u_{i}^{n},\quad m_{i}^{n+1} = 2m_{i}^{*} - m_{i}^{n}, \quad \rho_{i}^{n+1} = 2\rho_{i}^{*} - \rho_{i}^{n},  \label{eq3.3}
\end{align}
where $i=1,\cdots,M$ and $n=0,1,\cdots,N-1$.
Thus, the discrete mass, momentum and energy are defined as
\begin{align}
	&I_1^n = \sum_{i=1}^{M}\rho_{i}^n,  \label{eq2.5(b)} \\
	&I_2^n = \sum_{i=1}^{M}\left[u_i^n + \Omega (\rho_{i}^n)^2\right], \label{eq2.6(b)} \\
	&E^n = \frac{1}{2}\sum_{i=1}^{M}\bigg[(u_{i}^{n})^2 + \Big(\frac{u_{i+1}^n-u_{i-1}^n}{2h}\Big)^2 + (1-2\Omega A)(\rho_{i}^n)^2\bigg]. \label{eq2.7(b)}
\end{align}
The newly established discrete scheme will be shown to preserve the above three conserved quantities for the original system \eqref{eq1.1}--\eqref{eq1.2}. Then we have the following theorem.
\begin{theorem}\label{Theorem3_1}
	Consider the difference scheme \eqref{eq3.1}--\eqref{eq3.3} of the R2CH system \eqref{eq1.3}--\eqref{eq1.4}. Then the discrete mass, momentum, and energy with $\sigma = 1$ are conservative for the R2CH system \eqref{eq1.3}--\eqref{eq1.4}  in the following sense
	\begin{align*}
		I_{1}^{n+1} = I_{1}^{n}, \quad I_{2}^{n+1} = I_{2}^{n}, \quad E^{n+1} = E^{n}.   %\label{3.4}
	\end{align*}
\end{theorem}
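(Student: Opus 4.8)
The plan is to reduce each discrete conservation identity to its semi-discrete counterpart (Theorem \ref{Theorem2_1}) evaluated at the intermediate level $*$. The starting observation is that the update rule \eqref{eq3.3} makes every starred quantity the arithmetic mean of the two time levels, $w_i^* = \tfrac12(w_i^n + w_i^{n+1})$ for $w \in \{u, m, \rho\}$, and by linearity $v_i^* := \tfrac{1}{2h}(u_{i+1}^* - u_{i-1}^*) = \tfrac12(v_i^n + v_i^{n+1})$ together with $m_i^* = u_i^* - \tfrac1{2h}(v_{i+1}^* - v_{i-1}^*)$; that is, the discrete Helmholtz relation \eqref{eq3.2b} also holds at the $*$-level. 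Consequently every quadratic difference factors through the midpoint, $(w_i^{n+1})^2 - (w_i^n)^2 = (w_i^{n+1} + w_i^n)(w_i^{n+1} - w_i^n) = 4 w_i^*(w_i^* - w_i^n)$, which is the algebraic bridge that turns a time increment into a product of a starred value with $w_i^* - w_i^n$, the latter being supplied directly by the scheme \eqref{eq3.1}--\eqref{eq3.2}.

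For the mass, I would sum \eqref{eq3.2} over $i$; the flux difference telescopes to zero by periodicity exactly as in part (\textbf{I}) of Theorem \ref{Theorem2_1}, giving $\sum_i(\rho_i^* - \rho_i^n) = 0$, and then \eqref{eq3.3} yields $I_1^{n+1} = 2\sum_i\rho_i^* - \sum_i\rho_i^n = \sum_i\rho_i^n = I_1^n$. For the momentum, summing \eqref{eq3.1} with $\sigma=1$ over $i$ leaves only the rotation term, while summing the $*$-level relation \eqref{eq3.2b} gives $\sum_i m_i^* = \sum_i u_i^*$, so that $\sum_i(u_i^{n+1} - u_i^n) = 2\sum_i(u_i^* - u_i^n) = 2\sum_i(m_i^* - m_i^n)$ equals $\tau$ times the rotation sum. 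The quadratic piece $\Omega\sum_i[(\rho_i^{n+1})^2 - (\rho_i^n)^2] = 4\Omega\sum_i\rho_i^*(\rho_i^* - \rho_i^n)$ is evaluated through \eqref{eq3.2} and produces exactly the opposite rotation sum, so the two cancel and $I_2^{n+1} = I_2^n$ by the same mechanism as in part (\textbf{II}).

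The energy is the crux. Writing $E^{n+1} - E^n$ and factoring each of the three quadratic differences via the midpoint identity gives $E^{n+1} - E^n = 2\sum_i[u_i^*(u_i^* - u_i^n) + v_i^*(v_i^* - v_i^n) + (1-2\Omega A)\rho_i^*(\rho_i^* - \rho_i^n)]$. The key step is a discrete summation by parts, using $m_i^* - m_i^n = (u_i^* - u_i^n) - \tfrac1{2h}[(v_{i+1}^* - v_{i+1}^n) - (v_{i-1}^* - v_{i-1}^n)]$ together with periodicity, to collapse the first two terms into $\sum_i u_i^*(m_i^* - m_i^n)$; this is the fully discrete analogue of the passage \eqref{eq2.11}. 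Substituting $m_i^* - m_i^n$ from \eqref{eq3.1} and $\rho_i^* - \rho_i^n$ from \eqref{eq3.2} then turns $E^{n+1} - E^n$ into $\tau$ times precisely the expression shown to vanish at the end of part (\textbf{III}): the transport term cancels by the discrete identity \eqref{eq2.12}, the $A$- and $\mu$-terms vanish by the skew-symmetry of the centered first- and third-difference operators, and the remaining $\rho$-terms cancel against the rotation term.

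I expect the main obstacle to be the bookkeeping in this last reduction — in particular verifying that the cubic nonlinearity drops out, which is exactly where the hypothesis $\sigma = 1$ is indispensable, since for $\sigma \neq 1$ the term $\sum_i u_i^*((u_{i+1}^*)^2 - (u_{i-1}^*)^2) = \sum_i u_i^* u_{i+1}^*(u_{i+1}^* - u_i^*)$ does not telescope to zero — and in tracking the factors of $\tau$ and $\tau/2$ so that the starred identity lines up verbatim with Theorem \ref{Theorem2_1}.
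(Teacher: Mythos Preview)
Your proposal is correct and follows essentially the same route as the paper's proof: both exploit the midpoint relations $w_i^{*}=\tfrac12(w_i^{n}+w_i^{n+1})$ to factor quadratic increments, use summation by parts together with the discrete Helmholtz relation \eqref{eq3.2b} (which, as you note, persists at the $*$-level by linearity) to convert $\sum_i u_i^{*}(u_i^{*}-u_i^{n})+\sum_i v_i^{*}(v_i^{*}-v_i^{n})$ into $\sum_i u_i^{*}(m_i^{*}-m_i^{n})$, and then substitute \eqref{eq3.1}--\eqref{eq3.2} to land exactly on the algebraic identity already established in the proof of Theorem~\ref{Theorem2_1}. The only difference is organizational: you state the reduction to the semi-discrete proof at the $*$-level up front, whereas the paper rederives the needed cancellations (\eqref{eq2.12} and \eqref{eq3.9}) in place.
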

\begin{proof}
	(\textbf{I}). Firstly, we show that $I_{1}^{n+1} = I_{1}^{n}$. Noticing  \eqref{eq3.2} and \eqref{eq3.3}, it readily knows that
	\begin{align}
		\frac{\rho_{i}^{n+1} - \rho_{i}^{n}}{\tau} + \frac{(u_{i+1}^{*}+u_{i}^{*})(\rho_{i+1}^{*}+\rho_{i}^{*}) - (u_{i-1}^{*}+u_{i}^{*})(\rho_{i-1}^{*}+\rho_{i}^{*})}{4h}  = 0.   \label{eq3.5}
	\end{align}
	Summing \eqref{eq3.5} with respect to $i$ from 1 to $M$ and utilizing the periodicity, we have
	\begin{align*}
		&\; \sum_{i=1}^{M} \Big[\frac{\rho_{i}^{n+1} - \rho_{i}^{n}}{\tau} + \frac{(u_{i+1}^{*}+u_{i}^{*})(\rho_{i+1}^{*}+\rho_{i}^{*}) - (u_{i-1}^{*}+u_{i}^{*})(\rho_{i-1}^{*}+\rho_{i}^{*})}{4h}\Big] \nonumber \\
		= &\; \sum_{i=1}^{M}\frac{\rho_{i}^{n+1} - \rho_{i}^{n}}{\tau}
		= \; \frac{I_{1}^{n+1} - I_{1}^{n}}{\tau}
		= \; 0,
	\end{align*}
	which implies $I_{1}^{n+1} = I_{1}^{n}$.
	
	(\textbf{II}). Next we show $I_{2}^{n+1} = I_{2}^{n}$.
	In combination of \eqref{eq3.1} and \eqref{eq3.2b}, and noticing the periodicity, we have
	\begin{align}
		&\;	\sum_{i=1}^{M}\frac{u_{i}^{n+1} - u_{i}^{n}}{\tau}  \nonumber \\
		= &\; \sum_{i=1}^{M}\frac{m_{i}^{n+1} - m_{i}^{n}}{\tau} + \sum_{i=1}^{M}\frac{(v_{i+1}^{n+1} - v_{i-1}^{n+1}) - (v_{i+1}^{n} - v_{i-1}^{n})}{2h\tau}   \nonumber \\
		= &\; \Omega \sum_{i = 1}^{M} \rho_{i}^{*}\frac{(u_{i+1}^{*}+u_{i}^{*})(\rho_{i+1}^{*}+\rho_{i}^{*}) - (u_{i-1}^{*}+u_{i}^{*})(\rho_{i-1}^{*}+\rho_{i}^{*})}{2h}.   \label{eq3.6}
	\end{align}
	Again using \eqref{eq3.5}, we find that
	\begin{align}
		&\;	\sum_{i=1}^{M}\frac{(\rho_{i}^{n+1})^2 - (\rho_{i}^{n})^2}{\tau}
		=2\sum_{i=1}^{M}\frac{\rho_{i}^{n+1} + \rho_{i}^{n}}{2}\cdot \frac{\rho_{i}^{n+1} - \rho_{i}^{n}}{\tau}  \nonumber  \\
		= &\; \sum_{i=1}^{M} \rho_{i}^{*}\frac{(u_{i-1}^{*}+u_{i}^{*})(\rho_{i-1}^{*}+\rho_{i}^{*}) - (u_{i+1}^{*}+u_{i}^{*})(\rho_{i+1}^{*}+\rho_{i}^{*})}{2h}.  \label{eq3.7}
	\end{align}
	Combining \eqref{eq3.6} with \eqref{eq3.7}, we have
	\begin{align*}
		\frac{I_{2}^{n+1} - I_{2}^{n}}{\tau}
		= \sum_{i=1}^{M}\frac{u_{i}^{n+1} - u_{i}^{n}}{\tau}  + \Omega \sum_{i=1}^{M}\frac{(\rho_{i}^{n+1})^2 - (\rho_{i}^{n})^2}{\tau}
		= 0,
	\end{align*}
	which indicates that $I_{2}^{n+1} = I_{2}^{n}$.
	
	(\textbf{III}). Finally, we prove $E^{n+1} = E^{n}$. Using \eqref{eq2.12}, we have
	\begin{align*}
		\sum_{i=1}^{M} u_i^*\Big(\frac{m_{i+1}^*u_{i+1}^*-m_{i-1}^*u_{i-1}^*}{2h}  + m_i^*\frac{u_{i+1}^*-u_{i-1}^*}{2h}\Big) = 0.        %\label{eq3.8}
	\end{align*}
	Using the periodicity, we have
	\begin{align}
		\sum_{i=1}^{M} \rho_{i}^{*} \frac{(u_{i-1}^{*}+u_{i}^{*})(\rho_{i-1}^{*}+\rho_{i}^{*}) -(u_{i+1}^{*} + u_{i}^{*})(\rho_{i+1}^{*} + \rho_{i}^{*})}{4h}
		= \sum_{i=1}^{M} (\rho_i^*)^2 \frac{u_{i-1}^* - u_{i+1}^*}{4h}.  \label{eq3.9}
	\end{align}
	Multiplying \eqref{eq3.1} with $u_{i}^{*}$ and summing over $i$ from 1 to $M$, and rearranging the corresponding result,
	we have
	\begin{align*}
		0 =&\; \sum_{i=1}^{M}\frac{m_{i}^{n+1} - m_{i}^{n}}{\tau}  u_{i}^{*} + (1-2\Omega A)\sum_{i=1}^{M}\frac{(\rho_{i+1}^{*})^2 - (\rho_{i-1}^{*})^2}{4h}  u_{i}^{*}   \nonumber   \\
		= &\; \sum_{i=1}^{M} \bigg(\frac{u_{i}^{n+1} - u_{i}^{n}}{\tau}u_{i}^{*} - \frac{v_{i+1}^{n+1}-v_{i-1}^{n+1}-v_{i+1}^{n}+v_{i-1}^n}{2h\tau}u_{i}^{*}
		\bigg) + (1-2\Omega A) \sum_{i=1}^{M}(\rho_{i}^{*})^2 \frac{u_{i-1}^{*} - u_{i+1}^{*}}{4h}   \nonumber \\
		= &\; \sum_{i=1}^{M} \bigg[\frac{u_{i}^{n+1} - u_{i}^{n}}{\tau}u_{i}^{*} + \Big(\frac{v_{i}^{n+1}-v_i^{n}}{\tau}\Big) \Big(\frac{u_{i+1}^* - u_{i-1}^*}{2h}\Big)\bigg]   \nonumber \\
		&\; +  (1-2\Omega A) \sum_{i=1}^{M} \rho_{i}^{*} \frac{(u_{i-1}^{*}+u_{i}^{*})(\rho_{i-1}^{*}+\rho_{i}^{*}) -(u_{i+1}^{*} + u_{i}^{*})(\rho_{i+1}^{*} + \rho_{i}^{*})}{4h}\notag\\
		= &\; \sum_{i=1}^{M} \bigg[\Big( \frac{u_{i}^{n+1} - u_{i}^{n}}{\tau}\Big) \Big(\frac{u_{i}^{n+1} + u_{i}^{n}}{2}\Big) + \Big(\frac{v_{i}^{n+1} - v_{i}^{n}}{\tau}\Big) \Big( \frac{v_{i}^{n+1} + v_{i}^{n}}{2}\Big) \bigg]     \nonumber \\
		&\; +  (1-2\Omega A) \sum_{i=1}^{M} \frac{\rho_i^{n+1}+\rho_i^{n}}{2} \cdot \frac{\rho_i^{n+1}-\rho_i^n}{\tau}\nonumber\\
		 = & \; \frac{E^{n+1} - E^{n}}{\tau},
	\end{align*}
	where the periodicity, \eqref{eq3.3} and $\sigma = 1$ are used in the first equality,  the periodicity and \eqref{eq3.2b} are used in the second equality, the periodicity and \eqref{eq3.9} are used in the third equality, and \eqref{eq3.5} is used in the penultimate equality. This completes the proof.
\end{proof}
\section{Algorithm implementation}\label{sec4}
\setcounter{equation}{0}
Denote $\boldsymbol{u} = (u_1,u_2,\cdots,u_M)^T$, $\boldsymbol{m} = (m_1,m_2,\cdots,m_M)^T$ and $\boldsymbol{\rho} = (\rho_1,\rho_2,\cdots,\rho_M)^T$. Recalling \eqref{eq3.2b},
we have a linear system of equations $\boldsymbol{m} = \boldsymbol{B}\textbf{u}$, where $\boldsymbol{B}$ is a
symmetric circulant matrix defined by
$$\textbf{B} = {\rm \textbf{circ}}(c[0],c[1],\cdots,c[M-1])$$
with $c[0] = 1+\frac1{2h^2}$, $c[1] = 0$, $c[2] = -\frac{1}{4h^2}$, $c[3]= \cdots = c[M-3]=0$, $c[M-2]=-\frac{1}{4h^2}$ and $c[M-1] = 0$.
%\begin{align}
%\boldsymbol{B} = \left(
%\begin{array}{cccccccc}
%1 + \frac{1}{2h^2}  &  0 & -\frac{1}{4h^2} & 0 & \cdots & 0 & -\frac{1}{4h^2} &  0 \\
%0 & 1 + \frac{1}{2h^2} & 0 & -\frac{1}{4h^2} & 0 & 0 & \cdots & -\frac{1}{4h^2}  \\
%-\frac{1}{4h^2} & 0 &  1 + \frac{1}{2h^2} & 0 & -\frac{1}{4h^2} & 0 & \cdots & 0 \\
%   & \ddots &  & \ddots &  &  \ddots &   &     \\
%  &  & \ddots &  & \ddots &  & \ddots &  \\
%  0 & \cdots & 0 & -\frac{1}{4h^2} & 0 & 1 + \frac{1}{2h^2} & 0 & -\frac{1}{4h^2}  \\
%  -\frac{1}{4h^2} & 0 & \cdots & 0 & -\frac{1}{4h^2} & 0 & 1 + \frac{1}{2h^2} & 0  \\
%  0 & -\frac{1}{4h^2} & 0 & \cdots & 0 & -\frac{1}{4h^2}  & 0 & 1 + \frac{1}{2h^2}  \\
%\end{array}
%\right).
%\end{align}
Obviously, $\boldsymbol{B}$ is a dominant diagonal matrix, which allows us to express the vector $\boldsymbol{u}$ in terms of $\boldsymbol{m}$ as $\boldsymbol{u} = \boldsymbol{B}^{-1}\boldsymbol{m}$.
To avoid ambiguity we let $(\boldsymbol{B}^{-1}\boldsymbol{m}^*)_{i}$ denote the $i$th element of the vector.
Consequently, \eqref{eq3.1}--\eqref{eq3.3} can be rewritten equivalently in terms of $m^*$ as follows
\begin{align}
	&\frac{m_{i}^{*}-m_{i}^{n}}{\tau/2} + \frac{\sigma}{2h} \big[(m_{i+1}^{*}(\boldsymbol{B}^{-1}\boldsymbol{m}^*)_{i+1}-m_{i-1}^{*}(\boldsymbol{B}^{-1}\boldsymbol{m}^*)_{i-1}) + m_{i}^{*}((\boldsymbol{B}^{-1}\boldsymbol{m}^*)_{i+1}-(\boldsymbol{B}^{-1}\boldsymbol{m}^*)_{i-1})\big]    \nonumber \\
	&\quad= -\frac{3(1-\sigma)}{4h}\big[(\boldsymbol{B}^{-1}\boldsymbol{m}^*)_{i+1}^2 - (\boldsymbol{B}^{-1}\boldsymbol{m}^*)_{i-1}^2\big] + \frac{A}{2h} \big[(\boldsymbol{B}^{-1}\boldsymbol{m}^*)_{i+1}-(\boldsymbol{B}^{-1}\boldsymbol{m}^*)_{i-1}\big]     \nonumber \\
	&\quad\quad - \frac{\mu}{2h^3} \big[(\boldsymbol{B}^{-1}\boldsymbol{m}^*)_{i+2}-2(\boldsymbol{B}^{-1}\boldsymbol{m}^*)_{i+1}
	+2(\boldsymbol{B}^{-1}\boldsymbol{m}^*)_{i-1}-(\boldsymbol{B}^{-1}\boldsymbol{m}^*)_{i-2}\big]  \nonumber \\
	&\quad\quad -\frac{(1-2\Omega A)}{4h}\big[(\rho_{i+1}^{*})^2 - (\rho_{i-1}^{*})^2\big]   + \frac{\Omega}{2h} \rho_{i}^{*}\big[((\boldsymbol{B}^{-1}\boldsymbol{m}^*)_{i+1}+(\boldsymbol{B}^{-1}\boldsymbol{m}^*)_{i})(\rho_{i+1}^{*}+\rho_{i}^{*}) \notag\\
	&\quad\quad- ((\boldsymbol{B}^{-1}\boldsymbol{m}^*)_{i-1}+(\boldsymbol{B}^{-1}\boldsymbol{m}^*)_{i})(\rho_{i-1}^{*}+\rho_{i}^{*})\big],\label{eq4.28} \\
	&\frac{\rho_{i}^{*}-\rho_{i}^{n}}{\tau/2} + \frac{1}{4h} \big[((\boldsymbol{B}^{-1}\boldsymbol{m}^*)_{i+1}+(\boldsymbol{B}^{-1}\boldsymbol{m}^*)_{i})(\rho_{i+1}^{*}+\rho_{i}^{*}) \notag
	\\&\quad\quad- ((\boldsymbol{B}^{-1}\boldsymbol{m}^*)_{i-1}+(\boldsymbol{B}^{-1}\boldsymbol{m}^*)_{i})(\rho_{i-1}^{*}+\rho_{i}^{*})\big] = 0, \label{eq4.29} \\
	&\; m_{i}^{n+1} = 2m_{i}^{*} - m_{i}^{n}, \quad \rho_{i}^{n+1} = 2\rho_{i}^{*} - \rho_{i}^{n}.\label{eq4.30}
\end{align}
At present, we have a nonlinear system of equations with variables $\boldsymbol{m}^*$ and $\boldsymbol{\rho}^*$ only, which can be solved by a fixed point iteration method.

The algorithm flow chart of the scheme \eqref{eq4.28}--\eqref{eq4.30} is given as follows. Specifically, to solve the solutions at the $(n+1)$th time level, we suppose that $\boldsymbol{u}^{n,l}  =
\Big(u_1^{n,l},u_2^{n,l},\cdots,u_M^{n,l}\Big)^T$, $\boldsymbol{\rho}^{n,l}  =
\Big(\rho_1^{n,l},\rho_2^{n,l},\cdots,\rho_M^{n,l}\Big)^T$ and $\boldsymbol{m}^{n,l}  =
\Big(m_1^{n,l},m_2^{n,l},\cdots,m_M^{n,l}\Big)^T$ have been determined, then {\bf Algorithm 1} is utilized to solve the scheme \eqref{eq4.28}--\eqref{eq4.30}.
\begin{center}
	{\bf Algorithm 1}: The iterative process for solving \eqref{eq4.28}--\eqref{eq4.30} \\
	\begin{tabular}{rll}\hline
		1. & Set the tolerance error \emph{tol}. \cr
		2. & Give the initial guess $\boldsymbol{u}^{*,l} := \boldsymbol{u}^{n,l}$.\cr
		3. & For $n = 1,2,\cdots,N$ Do:\cr
		4. & \qquad Compute $\boldsymbol{\rho}^{*,l+1}$, $\boldsymbol{m}^{*,l+1}$, then obtain $\boldsymbol{u}^{*,l+1}$ \cr
		5. & EndDo   \cr
		6. & If $\Vert \boldsymbol{u}^{*,l} - \boldsymbol{u}^{*,l+1} \Vert_{\infty} > \emph{tol}$, $\boldsymbol{u}^{*,l} := \boldsymbol{u}^{*,l+1}$, GoTo 2;  \cr
		7. & else, $\boldsymbol{u}^{n+1} := 2 \boldsymbol {u}^{*,l+1} - \boldsymbol{u}^{n,l}$; $\boldsymbol{\rho}^{n+1} := 2 \boldsymbol{\rho}^{*,l+1} - \boldsymbol{\rho}^{n,l}$, $\boldsymbol{m}^{n+1} = \boldsymbol{Bu}^{n+1}$ .  \cr
		\hline
	\end{tabular}
\end{center}

\begin{remark}
	Provided the solution of the system \eqref{eq1.1}--\eqref{eq1.2} is less regular, then numerical oscillation will occur during the calculation. To eliminate this phenomenon, an efficient strategy is utilized  by adding local numerical viscosity. The following two viscous terms
	\begin{align*}
		&\; R_i^{u} = \frac{\epsilon_{i}^{u}}{2h}(u_{i+1}^{*}-2u_i^{*}+u_{i-1}^{*}),  \quad \text{and} \quad \; R_i^{\rho} = \frac{\epsilon_{i}^{\rho}}{2h}(\rho_{i+1}^{*}-2\rho_i^{*}+\rho_{i-1}^{*}),
	\end{align*}
	where
	\begin{align*}
		\epsilon_i^{u} =  \left\{
		\begin{array}{ll}
			1, \quad |u_{i+1}^{*} - 2u_{i}^{*} + u_{i-1}^{*}| \geqslant \varepsilon h, \\
			0, \quad otherwise,
		\end{array}
		\right.
		\quad\text{and}   \quad
		\epsilon_i^{\rho} =  \left\{
		\begin{array}{ll}
			1, \quad |\rho_{i+1}^{*} - 2\rho_{i}^{*} + \rho_{i-1}^{*}| \geqslant \varepsilon h, \\
			0, \quad otherwise.
		\end{array}
		\right.
	\end{align*}
	are added to the right-hand side of \eqref{eq3.1} and \eqref{eq3.2}, respectively.
	The above factor $\varepsilon$ is a threshold (usually very small, e.g., $\varepsilon=10^{-5}$) which can determine where the slope of the solution becomes unbounded.
	It is worth mentioning that the factor $\varepsilon$ varies from case to case in the simulation of the discontinuous solutions in the section \ref{sec5_2} below.
\end{remark}

\section{Numerical tests}\label{sec5}
\setcounter{equation}{0}
In this section, several examples for the benchmark problems are provided to verify the convergence, conservation laws and the performance of the proposed scheme \eqref{eq3.1}--\eqref{eq3.3}.
To test the convergence orders, the posterior error estimate is utilized in temporal direction and spatial direction.
To be more specific, for sufficient small $\tau$, we denote
\begin{align*}
	\|{\rm F}_{u}(h)\|_{\infty} = \max\limits_{1\leqslant i \leqslant M, 1 \leqslant n \leqslant N} \big| u_{i}^{n}(h,\tau) - u_{2i}^{n}(h/2,\tau)\big|, \quad{\rm Ord}_{\infty}^{h} = \log_2\frac{\|{\rm F}_{u}(h)\|_{\infty}}{\|{\rm F}_{u}(h/2)\|_{\infty}},
\end{align*}
and for sufficient small $h$, denote
\begin{align*}
	\|{\rm G}_{u}(\tau)\|_{\infty} = \max\limits_{1\leqslant i \leqslant M, 1 \leqslant n \leqslant N}\big| u_{i}^{n}(h,\tau) - u_{i}^{2n}(h,\tau/2)\big|, \quad {\rm Ord}_{\infty}^{\tau} = \log_2\frac{\|{\rm G}_{u}(\tau)\|_{\infty}}{\|{\rm G}_{u}(\tau/2)\|_{\infty}}.
\end{align*}

\subsection{Part I: smooth initial data}\label{sec5_1}
\begin{example}[\textbf{Dam-break problem} \cite{LP2016,ZLZ2022}] The R2CH system with the smooth initial data  \label{Exam5.1}
	\begin{align*}
		u(x,0) = 0, \quad \rho(x,0) = 1 + \tanh(x+a) - \tanh(x-a)  %\label{eq4.1}
	\end{align*}
	are considered, where $a$ is a dam-breaking parameter. The exact solution for the problem is unknown.
\end{example}
To test the convergence orders and conservation, we consider the following four cases:
\begin{itemize}
	\item \textbf{Case} (\uppercase\expandafter{\romannumeral1}). $a=0.1$, $A = 0$, $\mu = 0$, $\sigma = 1$, $\Omega = 0$, on the domain $[-6,6] \times[0,20]$.
	\item \textbf{Case} (\uppercase\expandafter{\romannumeral2}). $a=4$, $A = 0$, $\mu = 0$, $\sigma = 1$, $\Omega = 0$, on the domain $[-12\pi,12\pi]\times[0,2]$.
	\item \textbf{Case} (\uppercase\expandafter{\romannumeral3}). $a=0.1$, $A = 0.1$, $\mu = 0.1$, $\sigma = 1$, $\Omega = 73 \times10^{-6}$, on the domain $[-8,8]\times[0,1]$.
	\item  \textbf{Case} (\uppercase\expandafter{\romannumeral4}). $a=4$, $A = 1$, $\mu = 1$, $\sigma = 1$, $\Omega = 73 \times 10^{-6}$, on the domain $[-12\pi,12\pi]\times[0,2]$.
\end{itemize}

\textbf{(Convergence)} We first verify the convergence orders for the above four cases.
The temporal convergence orders of velocity $u(x,t)$ and height $\rho(x,t)$ are respectively listed in Table \ref{table1} and Table \ref{table2}, which show  the second-order convergence by fixing spatial grid $M=100$ and refining $N$. Similarly, Tables \ref{table3}--\ref{table4} illustrate the second-order spatial convergence order by fixing  $N=4000$ and refining $M$.

\textbf{(Conservation)} The conservations in Theorem \ref{Theorem3_1} are demonstrated in Table \ref{table5}, which shows our scheme indeed preserves three conserved quantities defined in \eqref{eq2.5(b)}--\eqref{eq2.7(b)}. It is worth noting that the total momentum in \textbf{Cases} (\uppercase\expandafter{\romannumeral1}) and (\uppercase\expandafter{\romannumeral2}) is zero, while it does not vanish in \textbf{Cases} (\uppercase\expandafter{\romannumeral3}) and (\uppercase\expandafter{\romannumeral4}) since $\Omega$ is nonzero in \eqref{eq2.6(b)}.

\textbf{(Portraits of solutions at different instants)} In addition, an applicable reference solution is necessary to evaluate the behavior of the numerical solution. To generate a reference solution, we take a refined grid $M=3200$ in space. Figures \ref{fig1}--\ref{fig2} show the behaviors of the predicted velocities $u(x,t)$ and heights $\rho(x,t)$ at time $t=2$ for \textbf{Cases} (\uppercase\expandafter{\romannumeral2}) and  (\uppercase\expandafter{\romannumeral4}). We observe that the solutions in \textbf{Case} (\uppercase\expandafter{\romannumeral2}) are symmetric, while the solutions in  \textbf{Case} (\uppercase\expandafter{\romannumeral4}) are asymmetric. Actually the symmetry depends heavily on the selection of parameters. It can be clearly seen that the conservative scheme \eqref{eq3.1}--\eqref{eq3.3} performs well in depicting the dam break solution of the R2CH system \eqref{eq1.3}--\eqref{eq1.4} even using a relatively rough grid. Figures \ref{fig3}--\ref{fig4} show the evolution of the predicted dam break solutions of the R2CH system over a long period of time. We find that even up to $t=50$, the solution in Figure \ref{fig3} still stretches leftward and rightward in a symmetric form, while the evolution trend of the solutions in Figure \ref{fig4} is clearly less symmetric.

\begin{table}[htb!]
	\begin{center}
		\renewcommand{\arraystretch}{1.25}
		\tabcolsep 0pt
		\caption{Numerical errors, temporal convergence orders of velocity $u(x,t)$
			of the scheme \eqref{eq3.1}--\eqref{eq3.3}
			with $M = 100$ for \textbf{Case} (\uppercase\expandafter{\romannumeral1}), \textbf{Case} (\uppercase\expandafter{\romannumeral2}), \textbf{Case} (\uppercase\expandafter{\romannumeral3}) and \textbf{Case} (\uppercase\expandafter{\romannumeral4}). }   \label{table1}
		\def\temptablewidth{0.88\textwidth}
		\rule{\temptablewidth}{1pt}
		{\footnotesize
			\begin{tabular*}{\temptablewidth}{@{\extracolsep{\fill}}ccccccccc}
				&\multicolumn{2}{c}{\textbf{Case} (\uppercase\expandafter{\romannumeral1})}
				&\multicolumn{2}{c}{\textbf{Case} (\uppercase\expandafter{\romannumeral2})}
				&\multicolumn{2}{c}{\textbf{Case} (\uppercase\expandafter{\romannumeral3})}
				&\multicolumn{2}{c}{\textbf{Case} (\uppercase\expandafter{\romannumeral4})}    \\
				\cline{2-3}\cline{4-5} \cline{6-7} \cline{8-9}
				$\quad N$ &$\|{\rm F}_{u}(\tau)\|_\infty$&${ {\rm Ord}_{\infty}^{\tau}}$&  $\|{\rm F}_{u}(\tau)\|_\infty$ & ${{\rm Ord}_{\infty}^{\tau}} $  &$\|{\rm F}_{u}(\tau)\|_\infty$ &${ {\rm Ord}_{\infty}^{\tau}} $  &$\|{\rm F}_{u}(\tau)\|_{\infty}$&${ {\rm Ord}_{\infty}^{\tau}} $ \\
				\hline
				$\quad 100 $ &$9.0981{\rm e}-04$ & $*$     &$3.3262{\rm e}-04$ &$*$      &$1.7801{\rm e}-07$ &$*$      &$8.3334{\rm e}-04$ &$*$       \quad\\
				$\quad 200 $ &$2.2919{\rm e}-04$ &$1.9890$ &$8.3267{\rm e}-05$ &$1.9982$ &$4.4503{\rm e}-08$ &$2.0000$ &$2.0881{\rm e}-04$ &$1.9967$   \quad\\
				$\quad 400 $ &$5.7402{\rm e}-05$ &$1.9973$ &$2.0827{\rm e}-05$ &$1.9999$ &$1.1126{\rm e}-08$ &$2.0000$ &$5.2245{\rm e}-05$ &$1.9988$   \quad\\
				$\quad 800 $ & $1.4357{\rm e}-05$&$1.9993$ &$5.1994{\rm e}-06$ &$1.9997$ &$2.7815{\rm e}-09$ &$2.0000$ &$1.3056{\rm e}-05$ &$2.0006$   \quad\\
				$\quad 1600$ & $3.5897{\rm e}-06$&$1.9998$ &$1.3006{\rm e}-06$ &$1.9995$ &$6.9536{\rm e}-10$ &$2.0000$ &$3.2636{\rm e}-06$ &$ 2.0001$  \quad\\
		\end{tabular*}}
		\rule{\temptablewidth}{1pt}
	\end{center}
\end{table}

\begin{table}[htb!]
	\begin{center}
		\renewcommand{\arraystretch}{1.25}
		\tabcolsep 0pt
		\caption{Numerical errors, temporal convergence orders in height  $\rho(x,t)$
			of the scheme \eqref{eq3.1}--\eqref{eq3.3}
			with $M=100$ for \textbf{Case} (\uppercase\expandafter{\romannumeral1}), \textbf{Case} (\uppercase\expandafter{\romannumeral2}), \textbf{Case} (\uppercase\expandafter{\romannumeral3}) and \textbf{Case} (\uppercase\expandafter{\romannumeral4}). }  \label{table2}
		\def\temptablewidth{0.88\textwidth}
		\rule{\temptablewidth}{1pt}
		{\footnotesize
			\begin{tabular*}{\temptablewidth}{@{\extracolsep{\fill}}ccccccccc}
				&\multicolumn{2}{c}{\textbf{Case} (\uppercase\expandafter{\romannumeral1})}
				&\multicolumn{2}{c}{\textbf{Case} (\uppercase\expandafter{\romannumeral2})}
				&\multicolumn{2}{c}{\textbf{Case} (\uppercase\expandafter{\romannumeral3})}
				&\multicolumn{2}{c}{\textbf{Case} (\uppercase\expandafter{\romannumeral4})}\\
				\cline{2-3}	\cline{4-5} \cline{6-7} \cline{8-9}
				$\quad N$ &$\|{\rm F}_{\rho}(\tau)\|_\infty$  &${{\rm Ord}_{\infty}^{\tau}}$ &$\|{\rm F}_{\rho}(\tau)\|_\infty$ &${{\rm Ord}_{\infty}^{\tau}}$ &$\|{\rm F}_{\rho}(\tau)\|_\infty$ &${ {\rm Ord}_{\infty}^{\tau}} $  &$\|{\rm F}_{\rho}(\tau)\|_{\infty}$&${ {\rm Ord}_{\infty}^{\tau}} $ \\
				\hline
				$\quad100$ &$1.4094{\rm e}-03$           &$*$      &$3.0830{\rm e}-04$ &$*$      &$5.2605{\rm e}-07$ &$*$      &$3.7110{\rm e}-04$ & $*$     \quad\\
				$\quad200$ &$3.5345{\rm e}-04$ &$1.9955$ &$7.7121{\rm e}-05$ &$1.9991$ &$1.3151{\rm e}-07$ &$2.0000$ &$9.2969{\rm e}-05$ &$1.9970$  \quad\\
				$\quad400$ &$8.8512{\rm e}-05$ &$1.9976$ &$1.9283{\rm e}-05$ &$1.9997$ &$3.2879{\rm e}-08$ &$2.0000$ &$2.3258{\rm e}-05$ &$1.9990$  \quad\\
				$\quad800$ &$2.2133{\rm e}-05$ &$1.9997$ &$4.8210{\rm e}-06$ &$1.9999$ &$8.2193{\rm e}-09$ &$2.0001$ &$5.8135{\rm e}-06$ &$2.0002$ \quad\\
				$\quad1600$&$5.5335{\rm e}-06$ &$1.9999$ &$1.2060{\rm e}-06$ &$1.9990$ &$2.0549{\rm e}-09$ &$2.0000$ &$1.4532{\rm e}-06$ &$2.0001$ \quad\\
		\end{tabular*}}
		\rule{\temptablewidth}{1pt}
	\end{center}
\end{table}

\begin{table}[htb!]
	\begin{center}
		\renewcommand{\arraystretch}{1.25}
		\tabcolsep 0pt
		\caption{Numerical errors, spatial convergence orders in velocity  $u(x,t)$
			of the scheme \eqref{eq3.1}--\eqref{eq3.3}
			with $N=4000$ for \textbf{Case} (\uppercase\expandafter{\romannumeral1}), \textbf{Case} (\uppercase\expandafter{\romannumeral2}), \textbf{Case} (\uppercase\expandafter{\romannumeral3}) and \textbf{Case} (\uppercase\expandafter{\romannumeral4}). }   \label{table3}
		\def\temptablewidth{0.88\textwidth}
		\rule{\temptablewidth}{1pt}
		{\footnotesize
			\begin{tabular*}{\temptablewidth}{@{\extracolsep{\fill}}ccccccccc}
				&\multicolumn{2}{c}{\textbf{Case} (\uppercase\expandafter{\romannumeral1})}
				&\multicolumn{2}{c}{\textbf{Case} (\uppercase\expandafter{\romannumeral2})}
				&\multicolumn{2}{c}{\textbf{Case} (\uppercase\expandafter{\romannumeral3})}
				&\multicolumn{2}{c}{\textbf{Case} (\uppercase\expandafter{\romannumeral4})}    \\
				\cline{2-3}   \cline{4-5}  \cline{6-7} \cline{8-9}
				$\quad M$ &$\|{\rm F}_{u}(h)\|_\infty$  &${{\rm Ord}_{\infty}^{h}}$ &$\|{\rm F}_{u}(h)\|_\infty$& ${{\rm Ord}_{\infty}^{h}}$ &$\|{\rm F}_{u}(h)\|_\infty$ &${ {\rm Ord}_{\infty}^{h}} $  &$\|{\rm F}_{u}(h)\|_{\infty}$&${ {\rm Ord}_{\infty}^{h}} $ \\
				\hline
				$\quad 100$ &$6.2259{\rm e}-04$  &$*$      &$1.2876{\rm e}-01$  &$*$      &$1.9975{\rm e}-04$  &$*$      &$1.1547{\rm e}-01$  &$*$     \quad\\
				$\quad 200$ &$1.5547{\rm e}-04$  &$2.0016$ &$3.7311{\rm e}-02$  &$1.7871$ &$4.9396{\rm e}-05$  &$2.0157$ &$4.6363{\rm e}-02$  &$1.3165$   \quad\\
				$\quad 400$ &$3.8856{\rm e}-05$  &$2.0004$ &$1.2058{\rm e}-02$  &$1.6296$ &$1.2327{\rm e}-05$  &$2.0025$ &$1.4034{\rm e}-02$  &$1.7240$  \quad\\
				$\quad 800$ &$9.7152{\rm e}-06$  &$1.9998$ &$3.3317{\rm e}-03$  &$1.8557$ &$3.0826{\rm e}-06$  &$1.9997$ &$3.7075{\rm e}-03$  &$1.9205$ \quad\\
				$\quad 1600$&$2.4288{\rm e}-06$  &$2.0000$ &$8.5597{\rm e}-04$  &$1.9606$ &$7.7051{\rm e}-07$  &$2.0003$ &$9.3181{\rm e}-04$  &$1.9923$ \quad\\
		\end{tabular*}}
		\rule{\temptablewidth}{1pt}
	\end{center}
\end{table}

\begin{table}[htb!]
	\begin{center}
		\renewcommand{\arraystretch}{1.25}
		\tabcolsep 0pt
		\caption{Numerical errors, spatial convergence orders in height  $\rho(x,t)$
			of the scheme \eqref{eq3.1}--\eqref{eq3.3}
			with $N = 4000$ for \textbf{Case} (\uppercase\expandafter{\romannumeral1}), \textbf{Case} (\uppercase\expandafter{\romannumeral2}), \textbf{Case} (\uppercase\expandafter{\romannumeral3}) and \textbf{Case} (\uppercase\expandafter{\romannumeral4}). }   \label{table4}
		\def\temptablewidth{0.88\textwidth}
		\rule{\temptablewidth}{1pt}
		{\footnotesize
			\begin{tabular*}{\temptablewidth}{@{\extracolsep{\fill}}ccccccccc}
				&\multicolumn{2}{c}{\textbf{Case} (\uppercase\expandafter{\romannumeral1})}
				&\multicolumn{2}{c}{\textbf{Case} (\uppercase\expandafter{\romannumeral2})}
				&\multicolumn{2}{c}{\textbf{Case} (\uppercase\expandafter{\romannumeral3})}
				&\multicolumn{2}{c}{\textbf{Case} (\uppercase\expandafter{\romannumeral4})}    \\
				\cline{2-3} \cline{4-5} \cline{6-7} \cline{8-9}
				$\quad M$&$\|{\rm F}_{\rho}(h)\|_\infty$  &${{\rm Ord}_{\infty}^{h}}$    &$\|{\rm F}_{\rho}(h)\|_\infty$ &${{\rm Ord}_{\infty}^{h}}$  &$\|{\rm F}_{\rho}(h)\|_\infty$ &${ {\rm Ord}_{\infty}^{h}}$  &$\|{\rm F}_{\rho}(h)\|_{\infty}$&${ {\rm Ord}_{\infty}^{h}} $ \\
				\hline
				$\quad 100$&$8.9486{\rm e}-04$  &$*$      &$1.4496{\rm e}-01$  &$*$       &$2.4458{\rm e}-04$   &$*$       &$2.4052{\rm e}-01$  &$*$      \quad\\
				$\quad 200$&$2.2263{\rm e}-04$  &$2.0070$ &$8.6727{\rm e}-02$  &$0.7411$  &$6.0340{\rm e}-05$   &$2.0191$  &$1.8170{\rm e}-01$  &$0.4046$   \quad\\
				$\quad 400$&$5.5586{\rm e}-05$  &$2.0018$ &$3.4910{\rm e}-02$  &$1.3129$  &$1.5037{\rm e}-05$   &$2.0046$  &$7.7859{\rm e}-02 $ &$1.2226$  \quad\\
				$\quad 800$&$1.3894{\rm e}-05$  &$2.0003$ &$1.0946{\rm e}-02$  &$1.6732$  &$3.7620{\rm e}-06$   &$1.9990$  &$2.5178{\rm e}-02$  &$1.6289$ \quad\\
				$\quad 1600$&$3.4736{\rm e}-06$ &$1.9999$ &$2.9327{\rm e}-03$  &$1.9001$  &$9.4032{\rm e}-07$   &$2.0003$  &$7.0786{\rm e}-03$  &$1.8305$ \quad\\
		\end{tabular*}}
		\rule{\temptablewidth}{1pt}
	\end{center}
\end{table}
\begin{table}[tbh!]
	\begin{center}
		\renewcommand{\arraystretch}{1.10}
		\tabcolsep 0pt
		\caption{Numerical conserved quantities $E^n$, $I_1^n$ and $I_2^n$ at time $t^n$.}
		\def\temptablewidth{0.8\textwidth}
		\rule{\temptablewidth}{1pt}            \label{table5}
		{\footnotesize
			\begin{tabular*}{\temptablewidth}{@{\extracolsep{\fill}}cccc}
				&\multicolumn{3}{c}{\textbf{Case} (I), $h=0.06$, $\tau = 0.01$}  \\ %M=200
				\cline{2-4}
				$\quad t_n\quad$  &$E^n$  &$I_1^n$ & $I_2^n$\\
				\hline
				$\quad0\quad$   &$107.1098478543294\quad$   &$206.6665840981688\quad$  &$0$\;\\
				$\quad2\quad$   &$107.1098478543293\quad$   &$206.6665840981688\quad$  &$-0.000000000000005\;$\\
				$\quad4\quad$   &$107.1098478543292\quad$   &$206.6665840981687\quad$  &$-0.000000000000002\;$\\
				$\quad6\quad$   &$107.1098478543292\quad$   &$206.6665840981687\quad$  &$~~~0.000000000000022\;$\\
				$\quad8\quad$   &$107.1098478543292\quad$   &$206.6665840981686\quad$  &$~~~0.000000000000013\;$\\
				$\quad10\quad$  &$107.1098478543292\quad$   &$206.6665840981686\quad$  &$~~~0.000000000000017\;$\\
				\hline
				&\multicolumn{3}{c}{\textbf{Case} (II), $h=0.5$, $\tau = 0.005$}\\ %M=150
				\cline{2-4}
				$\quad t_n\quad$    &  $E^n$ & $I_1^n$  & $I_2^n$ \\
				\hline
				$\quad0\quad $   &$134.6831098503824\quad$    &$181.8309885794262\quad$    &$~~~0$     \;       \\
				$\quad2\quad$    &$134.6831098503929\quad$    &$181.8309885794291\quad$    &$~~~0.000000000000020\;$ \\
				$\quad4\quad$    &$134.6831098503961\quad$    &$181.8309885794310\quad$    &$~~~0.000000000000039\;$  \\
				$\quad6\quad$    &$134.6831098503970\quad$    &$181.8309885794320\quad$    &$~~~0.000000000000074\;$ \\
				$\quad8\quad$    &$134.6831098503970\quad$    &$181.8309885794321\quad$    &$~~~0.000000000000009\;$ \\
				$\quad10\quad$   &$134.6831098503968\quad$    &$181.8309885794320\quad$    &$~~~0.000000000000006\;$ \\
				\hline
				&\multicolumn{3}{c}{\textbf{Case} (III), $h=0.16$, $\tau = 0.0025$} \\ %M=100
				\cline{2-4}
				$\quad t_n\quad$    &  $E^n$ & $I_1^n$  & $I_2^n$ \\
				\hline
				$\quad0\quad$    &$52.66545441047811\quad$  &$102.4999994287402\quad$     &$~~~0.007689268607251\;$      \\
				$\quad1\quad$    &$52.66545441059597\quad$  &$102.4999994287401\quad$     &$~~~0.007689268554501\;$      \\
				$\quad2\quad$    &$52.66545441119767\quad$  &$102.4999994287401\quad$     &$~~~0.007689262987543\;$      \\
				$\quad3\quad$    &$52.66545441165016\quad$  &$102.4999994287401\quad$     &$~~~0.007689144120514\;$      \\
				$\quad4\quad$    &$52.66545441384896\quad$  &$102.4999994287401\quad$     &$~~~0.007687960991156\;$      \\
				$\quad5\quad$    &$52.66545444157879\quad$  &$102.4999994287402\quad$     &$~~~0.007680802034650\;$      \\
				\hline
				& \multicolumn{3}{c}{\textbf{Case} (IV), $h=0.45$, $\tau = 0.0025$} \\ %M=170
				\cline{2-4}
				$\quad t_n\quad$    &  $E^n$ & $I_1^n$  & $I_2^n$ \\
				\hline
				$\quad0\quad$    &$152.6185732846562\quad$   &$206.0751204356079\quad$     &$~~~0.022285565392107\;$      \\
				$\quad1\quad$    &$152.6185714348163\quad$   &$206.0751204356105\quad$     &$~~~0.022285565391962\;$      \\
				$\quad2\quad$    &$152.6185618398274\quad$   &$206.0751204356114\quad$     &$~~~0.022285565214827\;$      \\
				$\quad3\quad$    &$152.6185527930804\quad$   &$206.0751204356117\quad$     &$~~~0.022285564293643\;$      \\
				$\quad4\quad$    &$152.6185466151580\quad$   &$206.0751204356121\quad$     &$~~~0.022285563402993\;$      \\
				$\quad5\quad$    &$152.6185421518980\quad$   &$206.0751204356124\quad$     &$~~~0.022285563022157\;$      \\
		\end{tabular*}}
		\rule{\temptablewidth}{1pt}
	\end{center}
\end{table}

\begin{figure}[htbp]
	\vspace{-8mm}
	\subfigure[$M=200$]{\centering
		\includegraphics[width=0.35\textwidth]{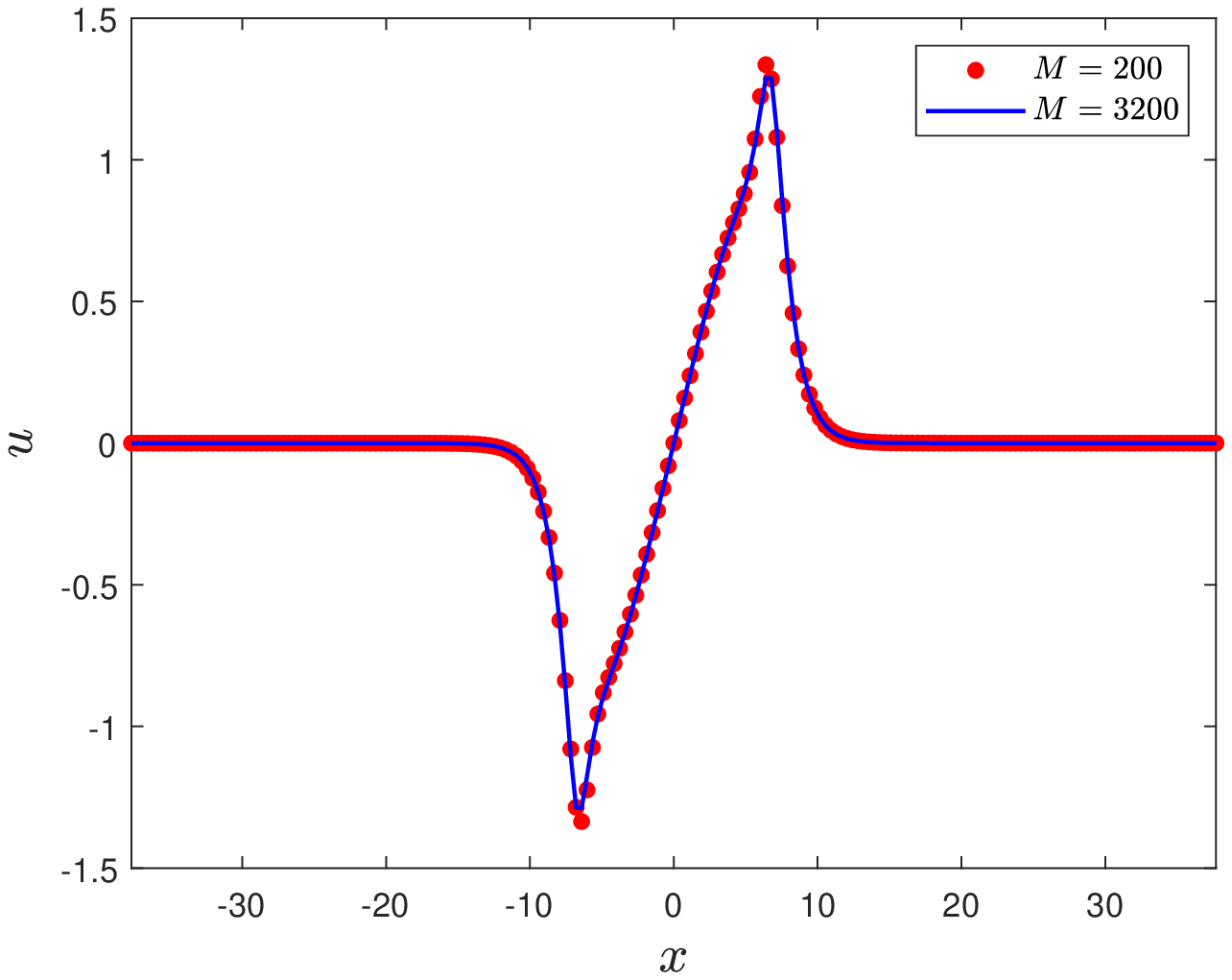}
	}\hspace{-5.5mm}\subfigure[$M=400$]{\centering
		\includegraphics[width=0.35\textwidth]{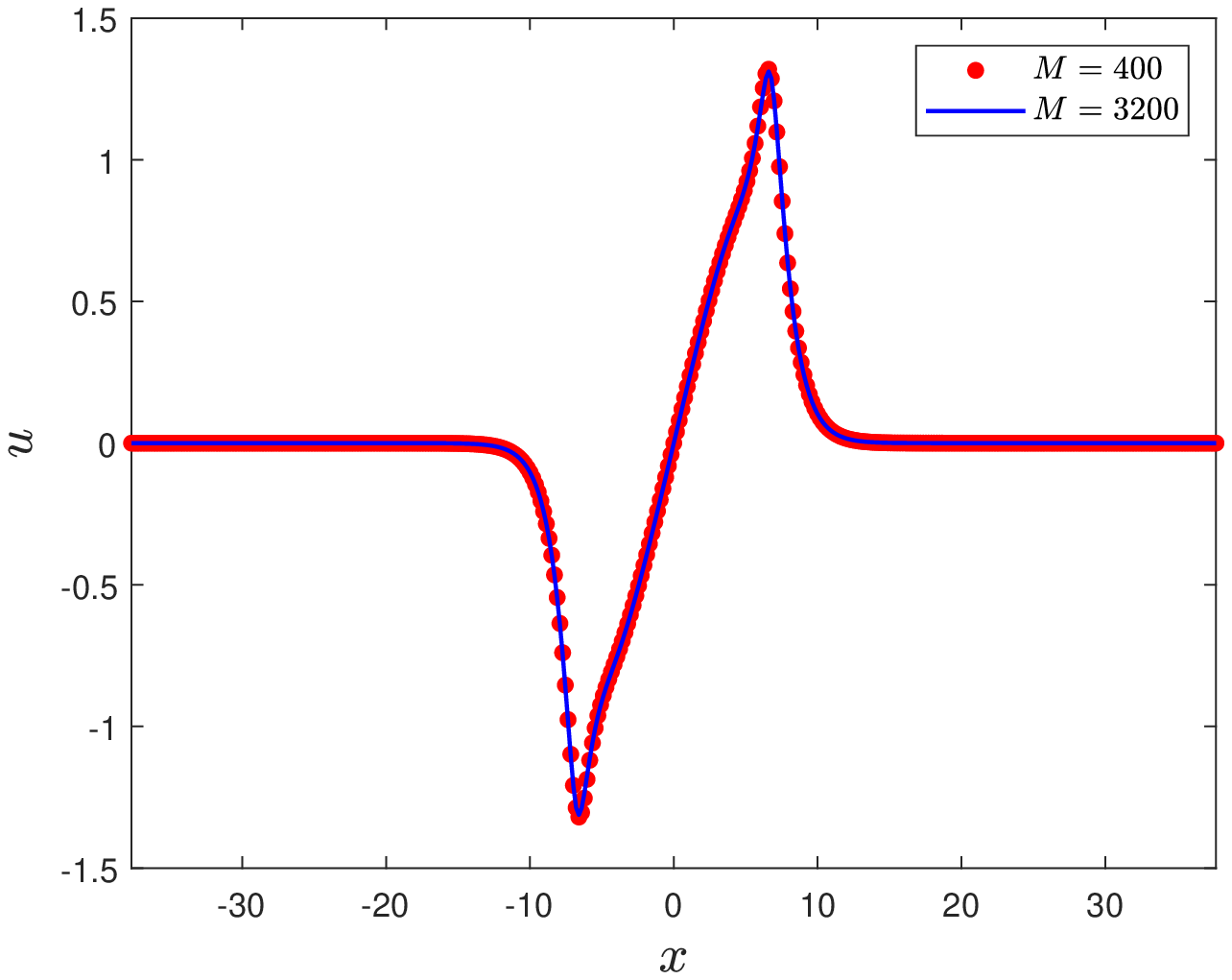}
	}\hspace{-5.5mm}\subfigure[$M=800$]{\centering
		\includegraphics[width=0.35\textwidth]{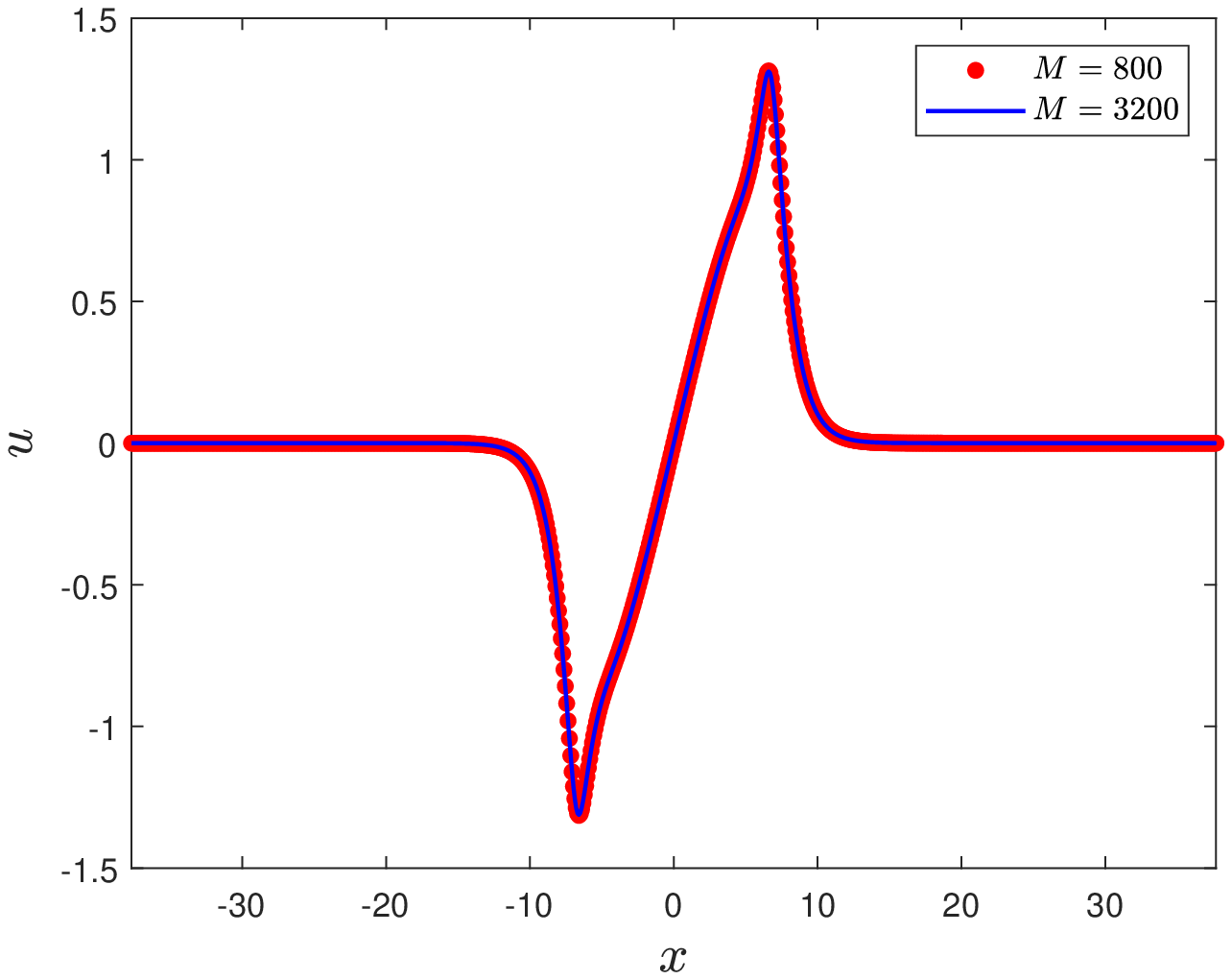}
	}
	\subfigure[$M=200$]{\centering
		\includegraphics[width=0.35\textwidth]{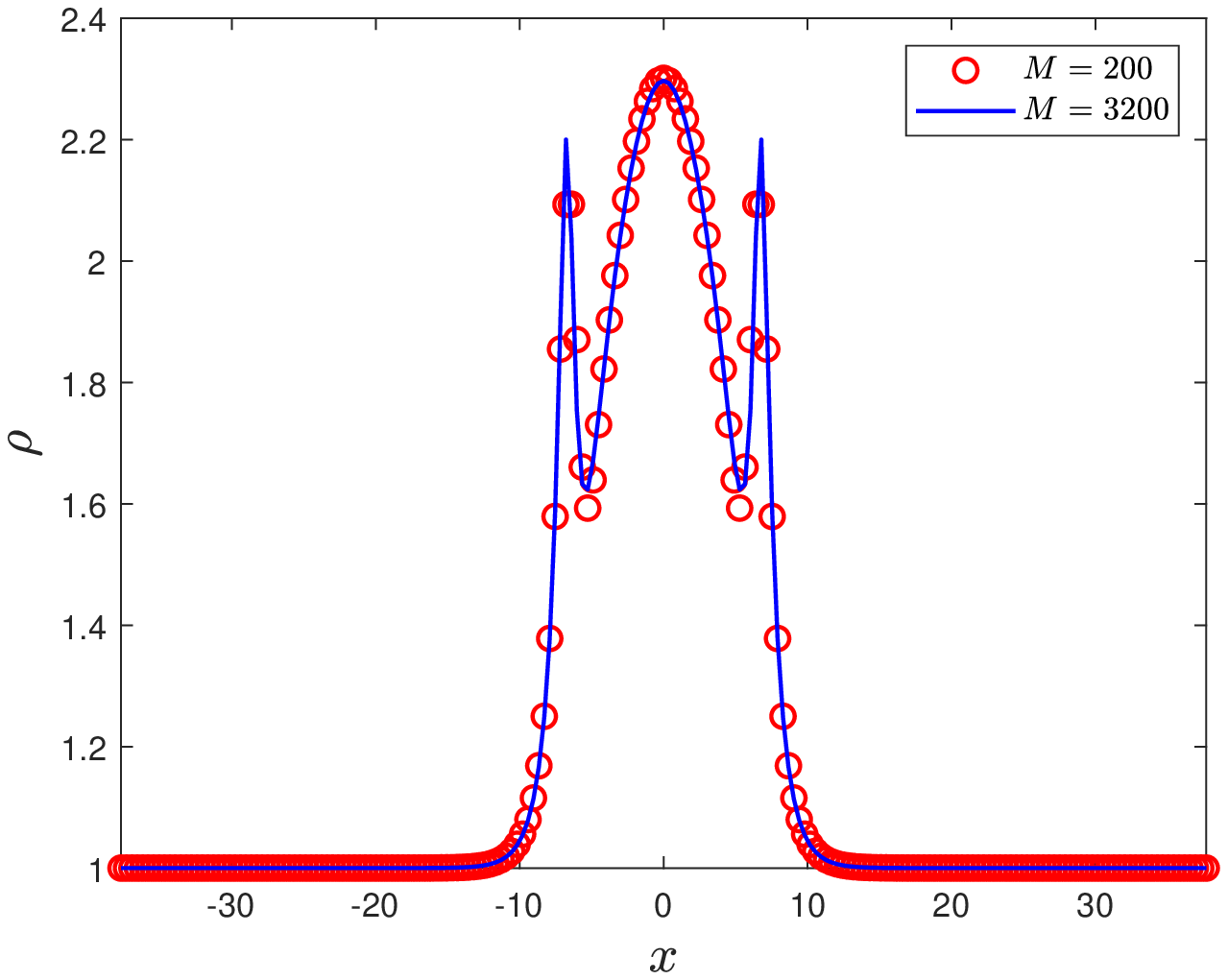}
	}\hspace{-5.5mm}\subfigure[$M=400$]{\centering
		\includegraphics[width=0.35\textwidth]{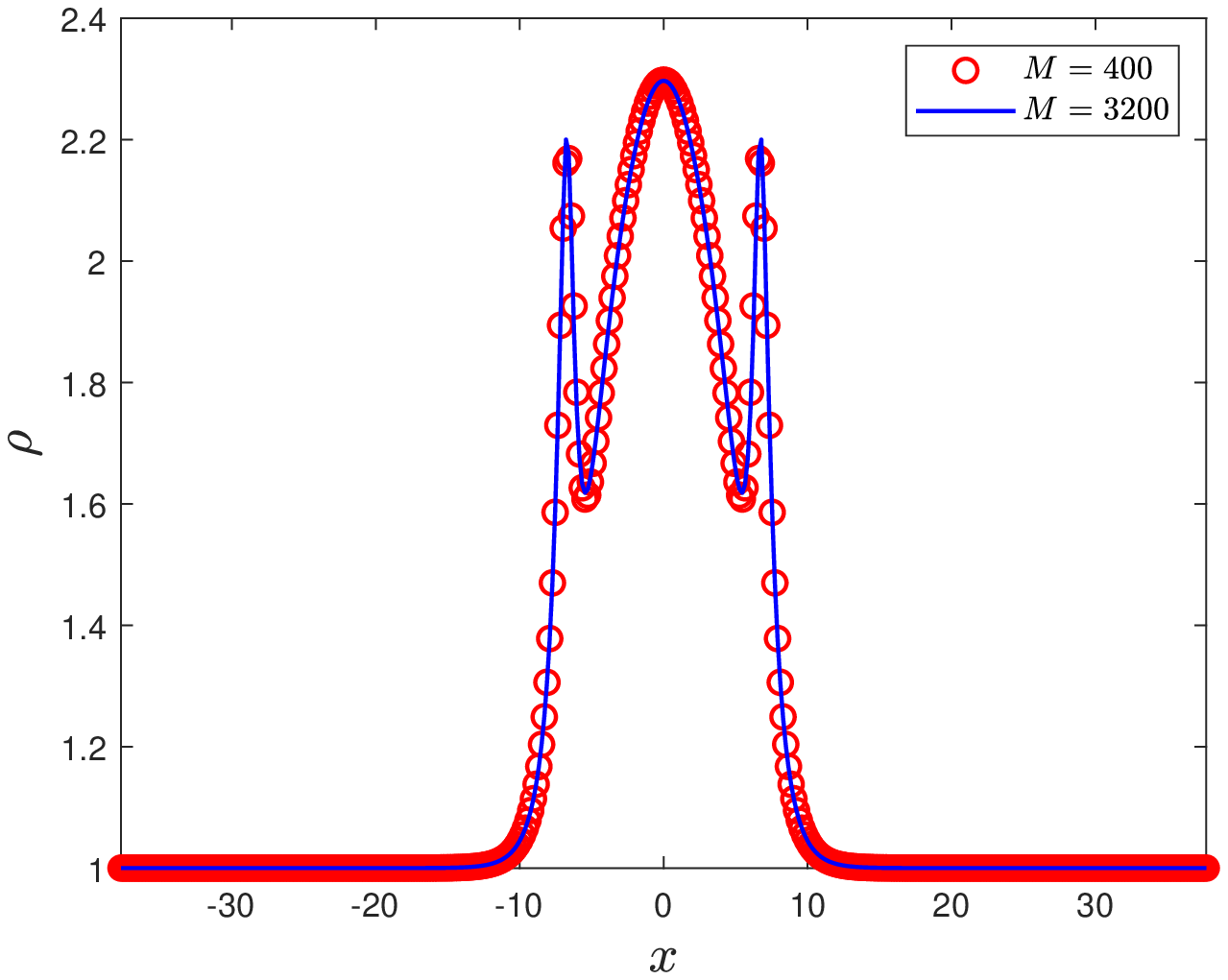}
	}\hspace{-5.5mm}\subfigure[$M=800$]{\centering
		\includegraphics[width=0.35\textwidth]{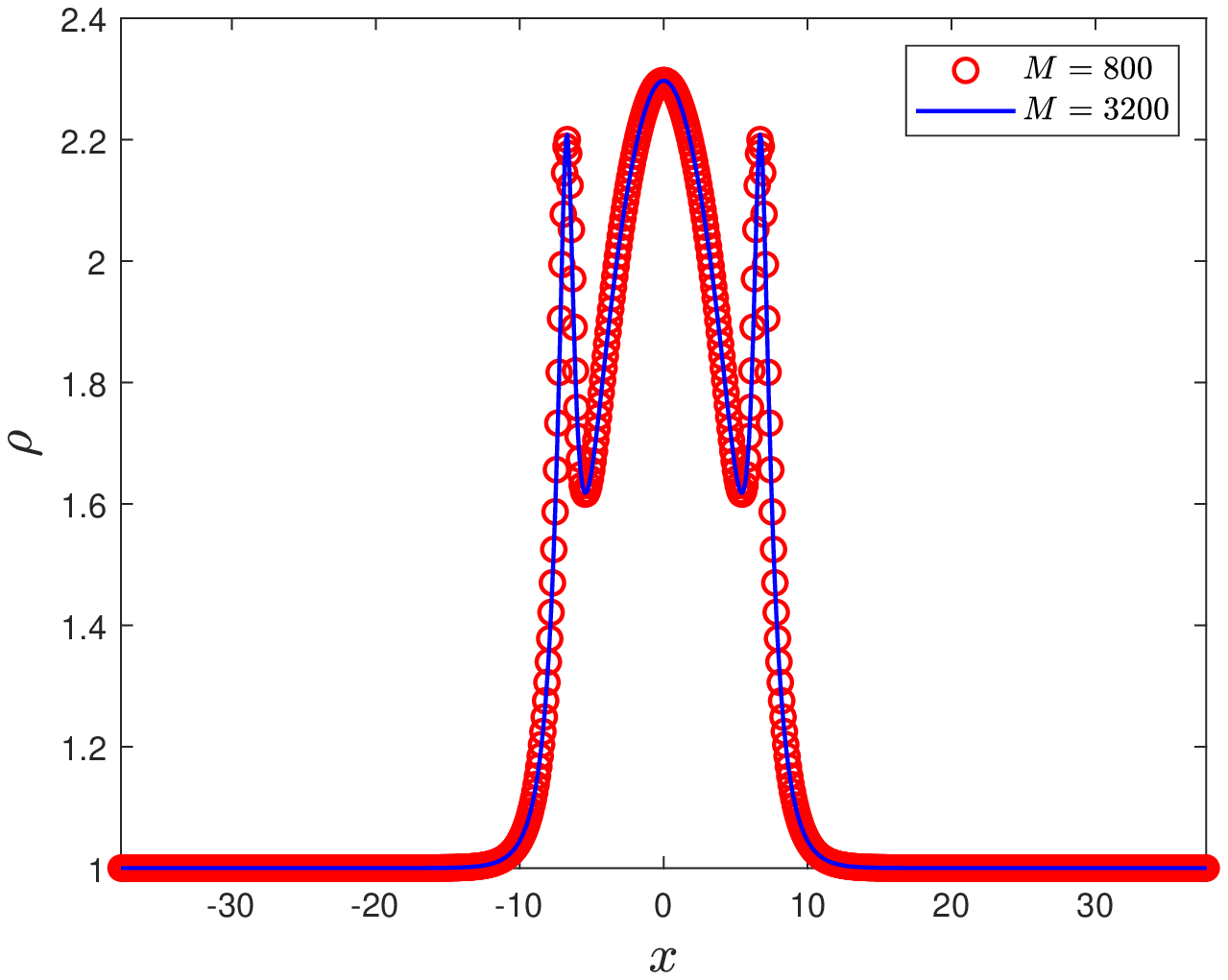}
	}
	\caption{The profiles of the predicted dam-break solutions of velocities $u(x,t)$ and heights $\rho(x,t)$ using different spatial grid points at $t = 2$ in \textbf{Case} (\uppercase\expandafter{\romannumeral2}). }	\label{fig1}
\end{figure}

	\begin{figure}[htbp]
	\vspace{-3mm}
	\subfigure[$M=200$]{\centering
		\includegraphics[width=0.35\textwidth]{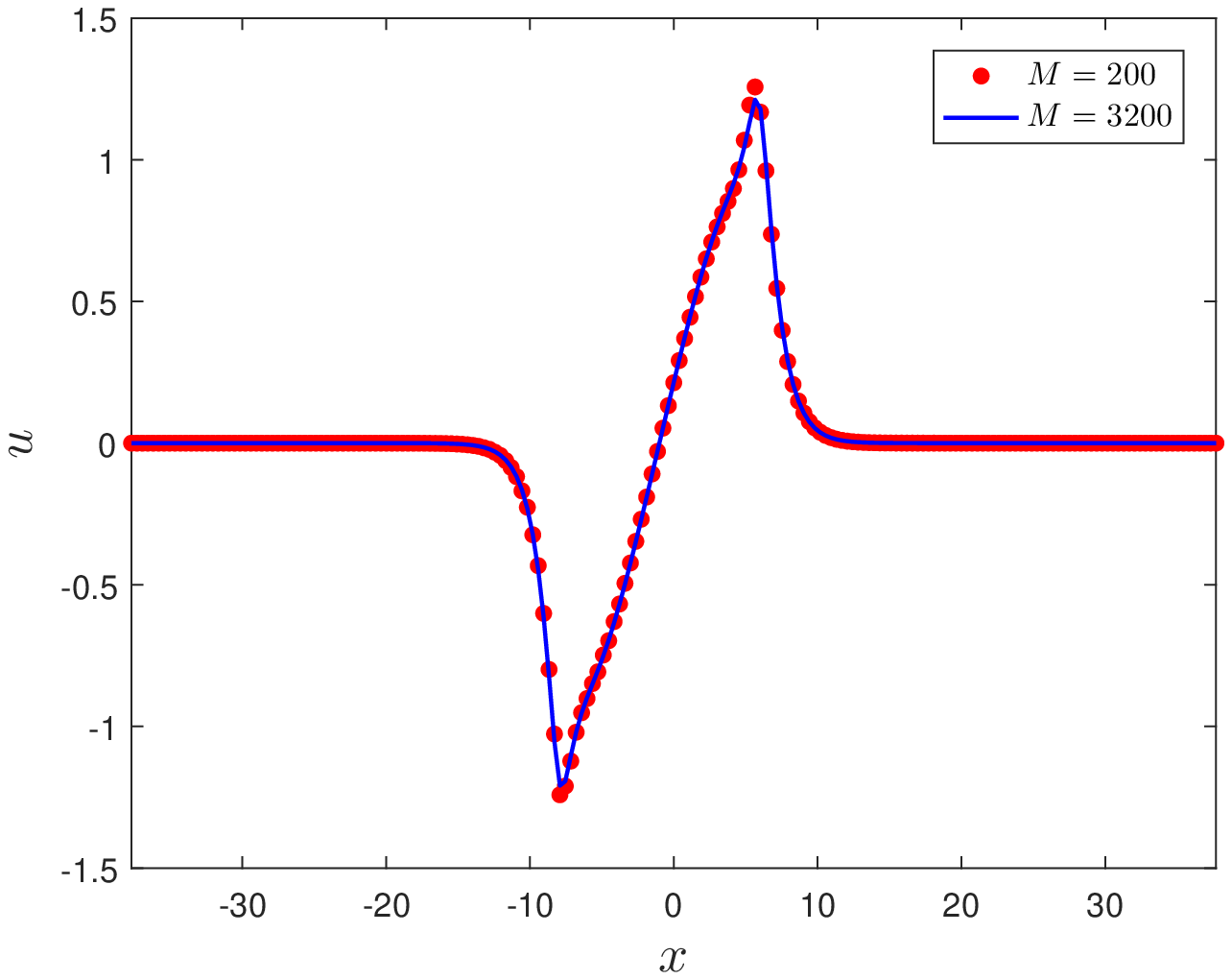}
	}\hspace{-5.5mm}\subfigure[$M=400$]{\centering
		\includegraphics[width=0.35\textwidth]{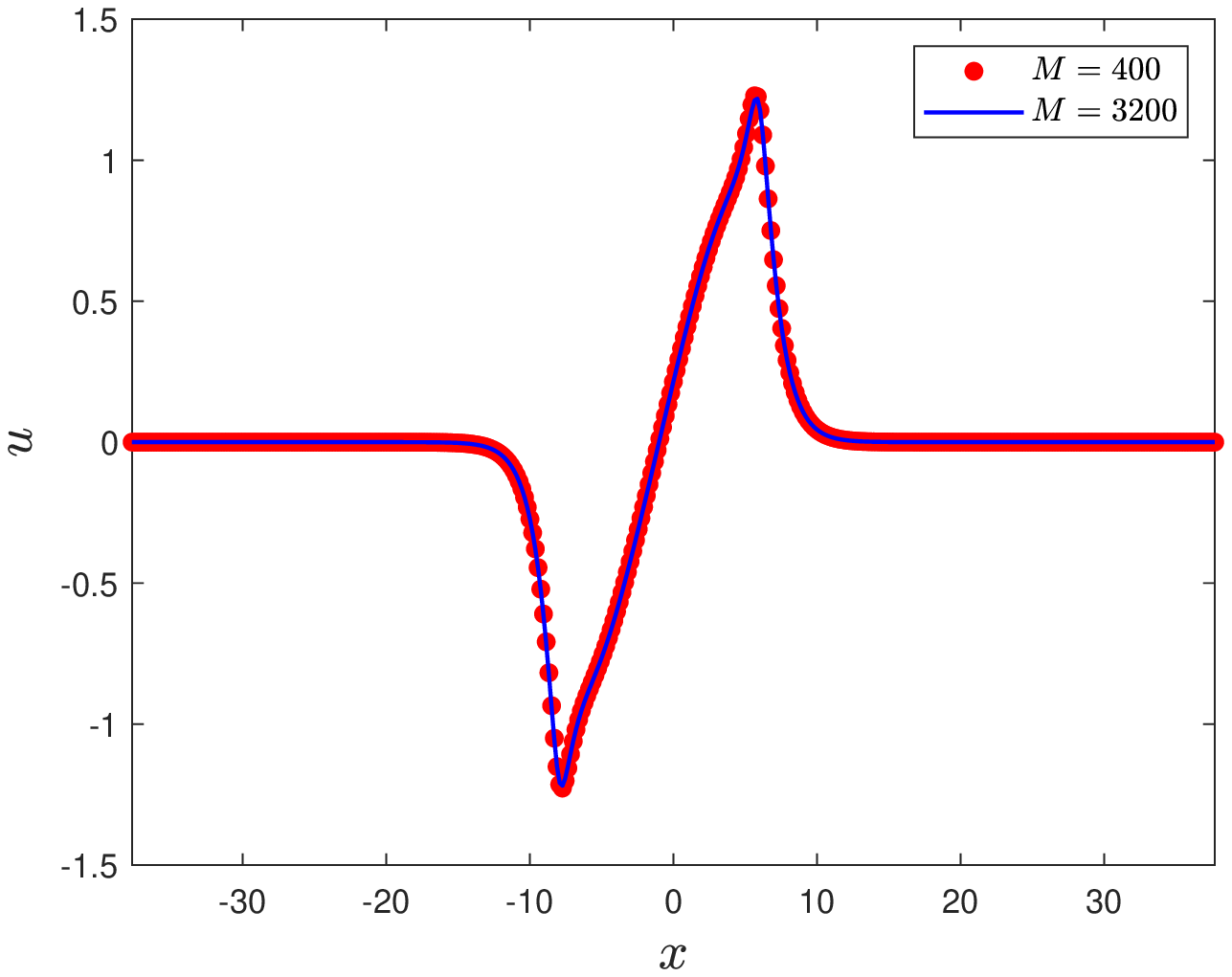}
	}\hspace{-5.5mm}\subfigure[$M=800$]{\centering
		\includegraphics[width=0.35\textwidth]{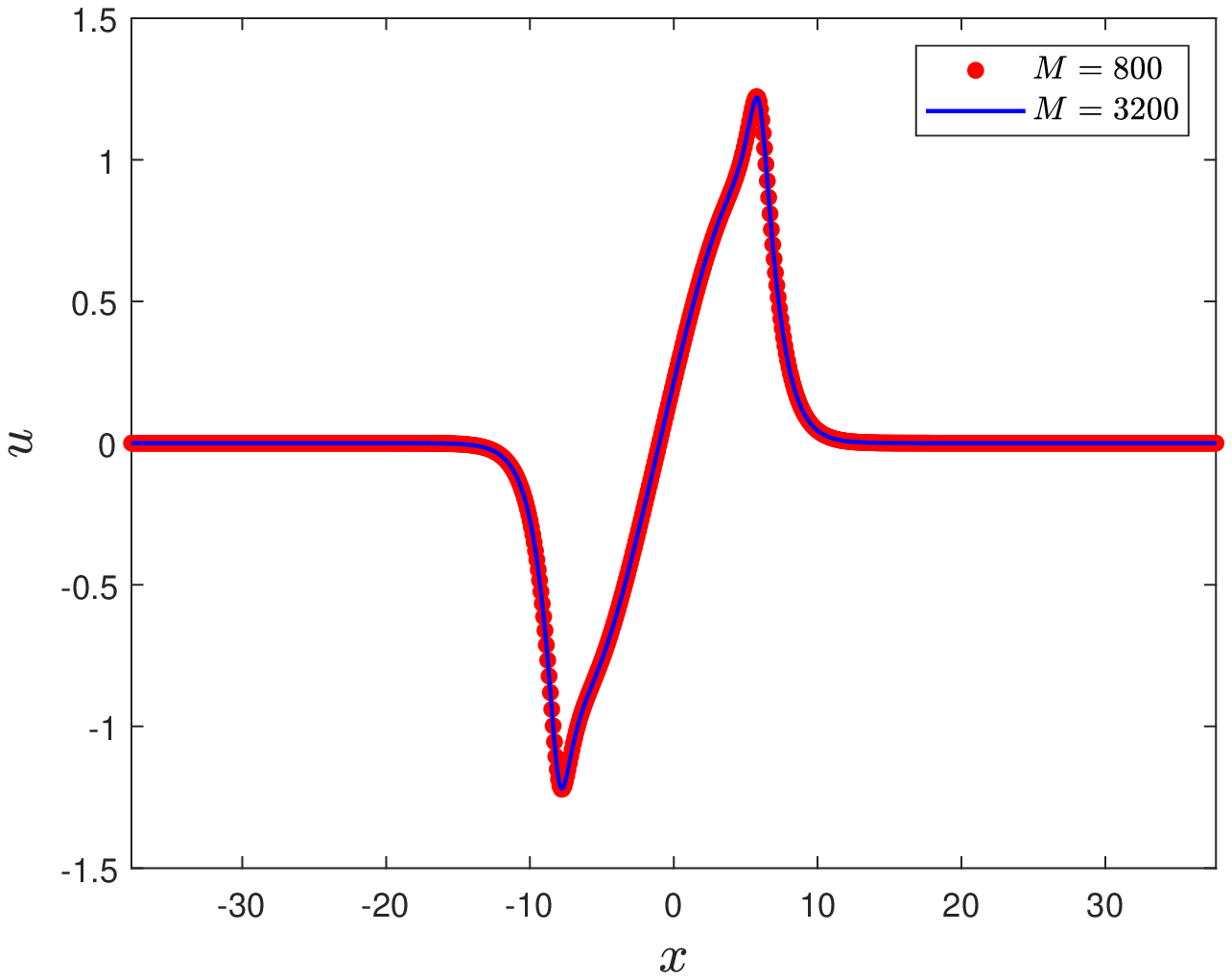}
	}
	\subfigure[$M=200$]{\centering
		\includegraphics[width=0.35\textwidth]{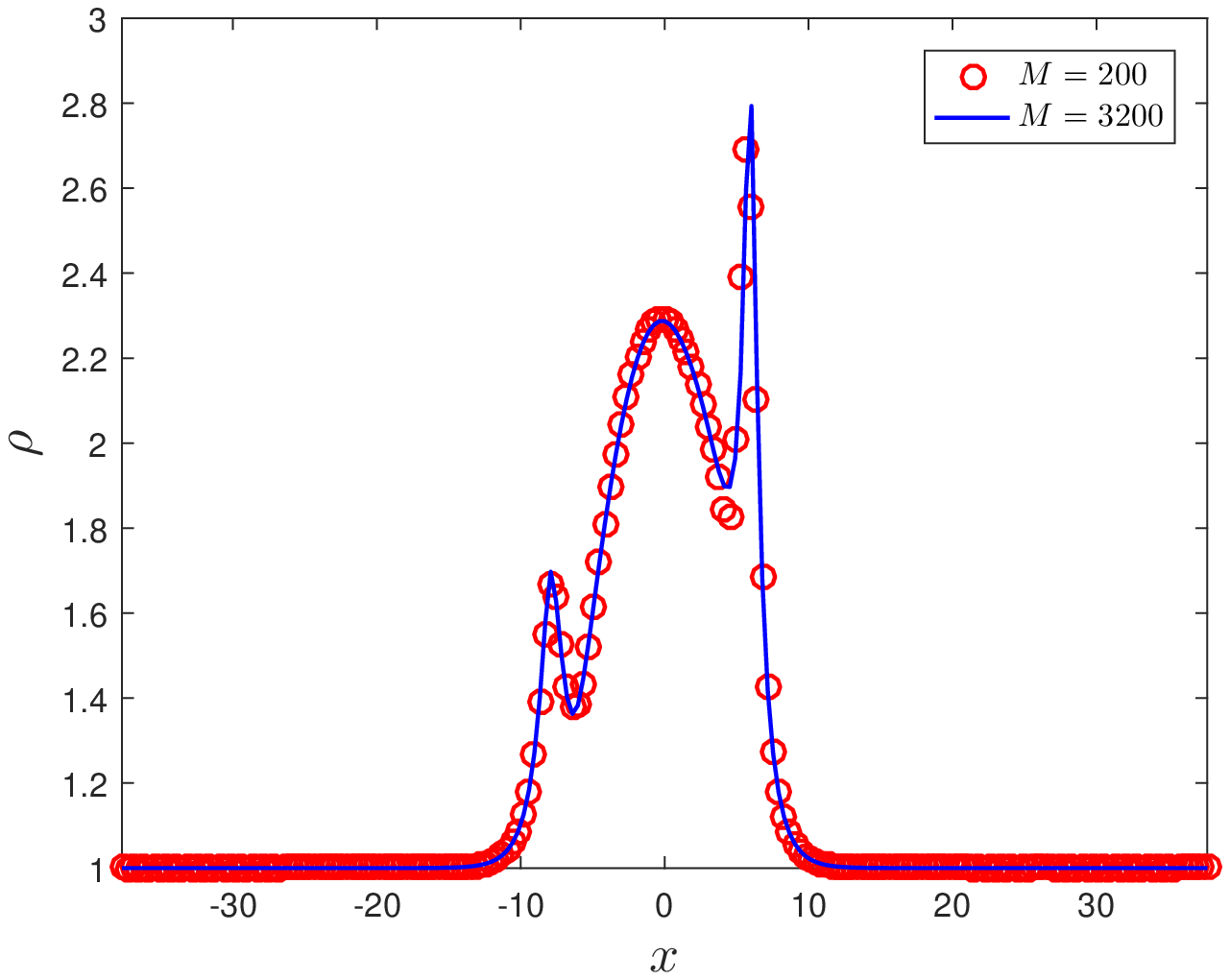}
	}\hspace{-5.5mm}\subfigure[$M=400$]{\centering
		\includegraphics[width=0.35\textwidth]{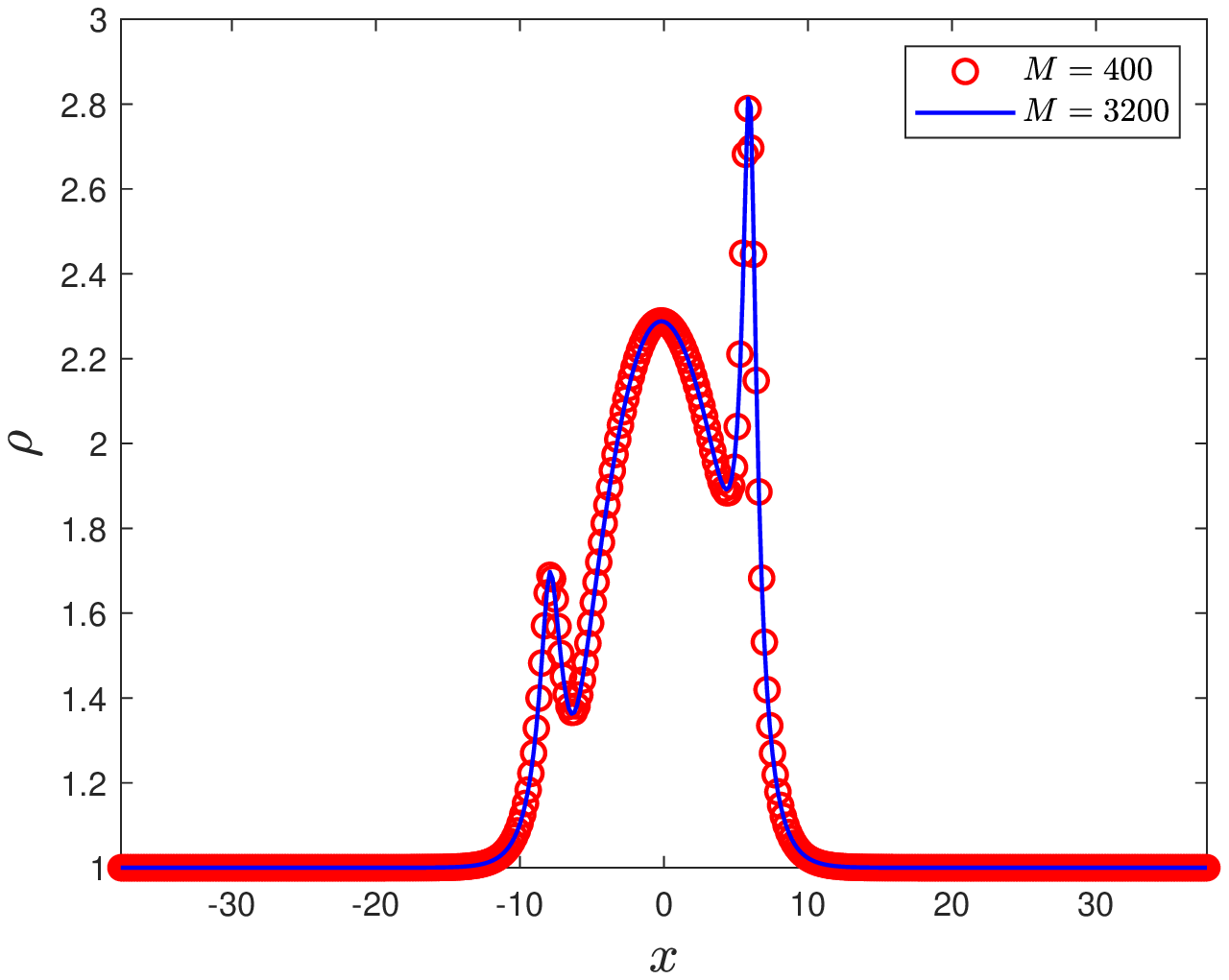}
	}\hspace{-5.5mm}\subfigure[$M=800$]{\centering
		\includegraphics[width=0.35\textwidth]{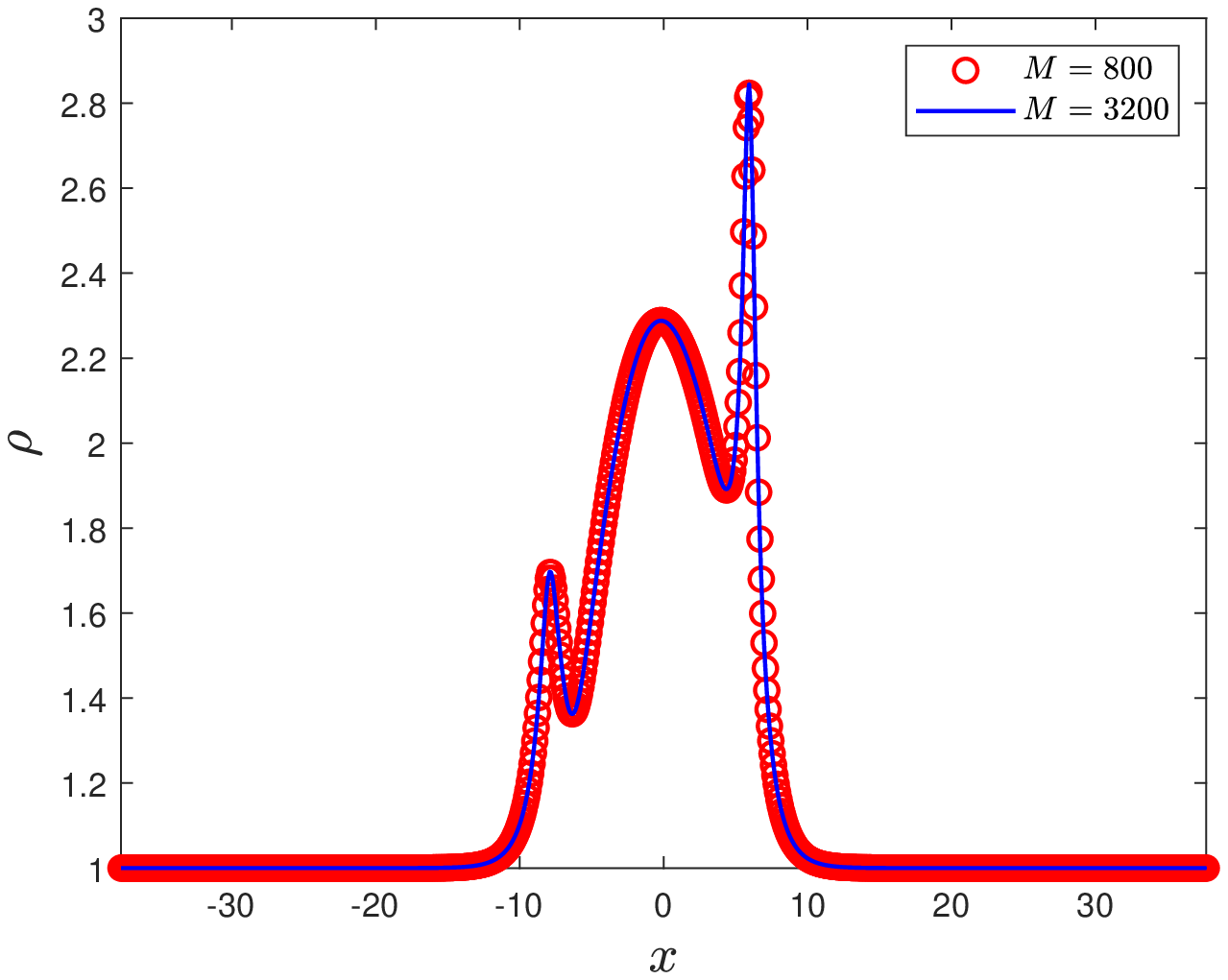}
	}
	\caption{The profiles of the predicted dam-break solutions of velocities $u(x,t)$ and heights $\rho(x,t)$ using different spatial grid points at $t = 2$ in \textbf{Case} (\uppercase\expandafter{\romannumeral4}). }  \label{fig2}
\end{figure}

	\begin{figure}[htbp]
	\vspace{-8mm}
	\centering
	\subfigure[Velocity variable, view(45,70)]{\centering
		\includegraphics[width=0.45\textwidth]{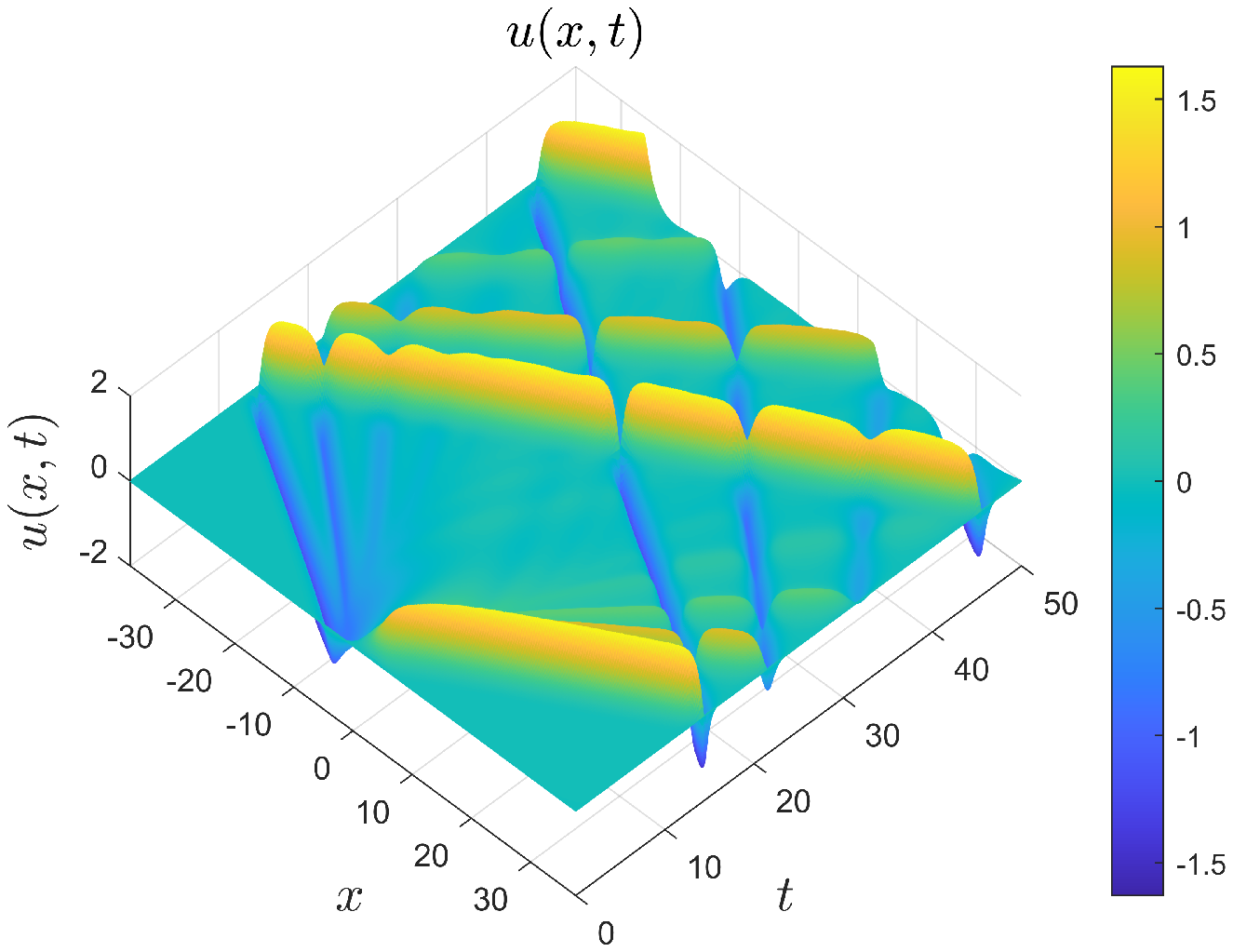}
	}
	%\subfigure[Velocity variable, view(0,90)]{\centering
		%		\includegraphics[width=0.45\textwidth]{Dam/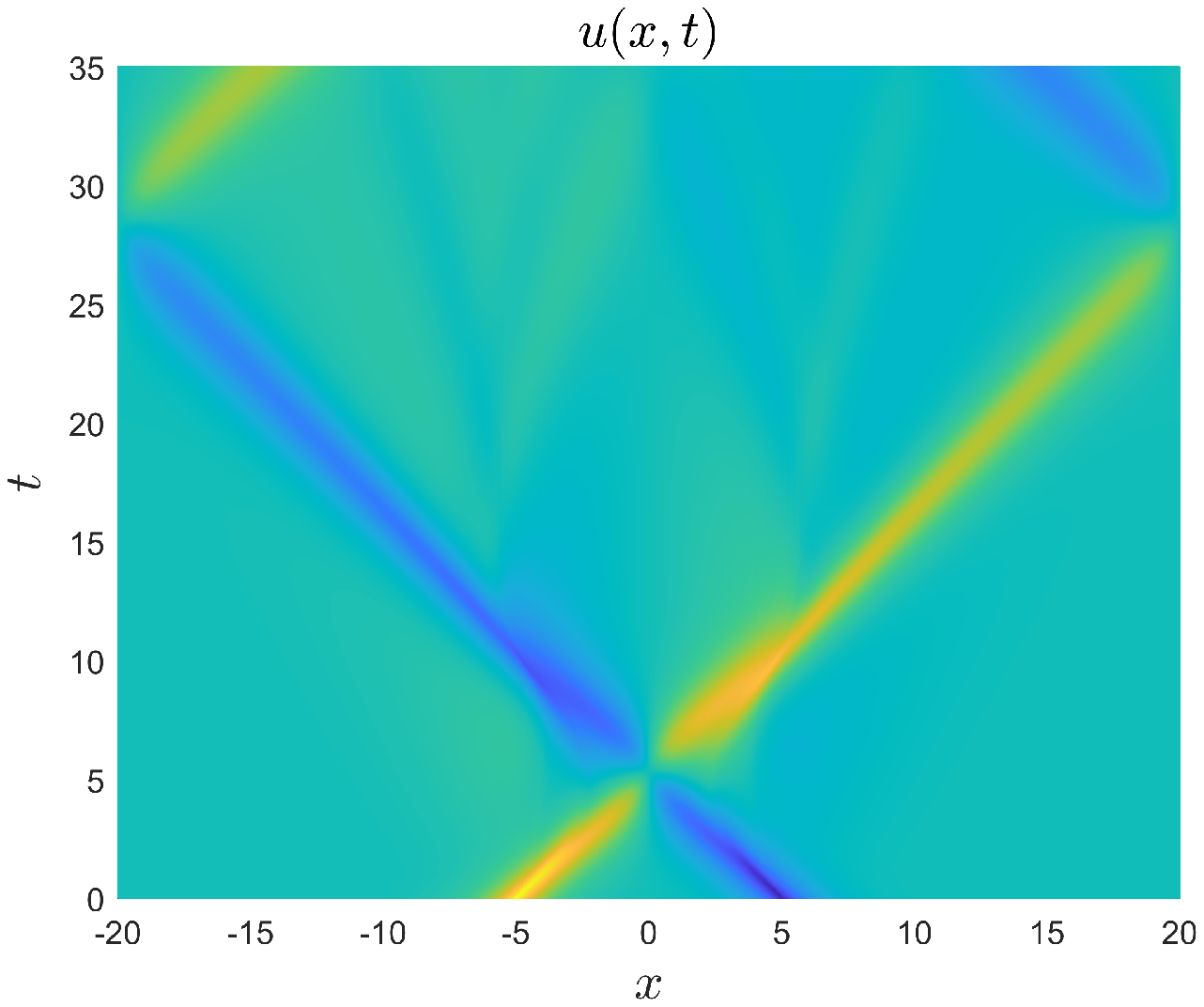}
		%	}
	\subfigure[Height variable, view(45,70)]{\centering
		\includegraphics[width=0.45\textwidth]{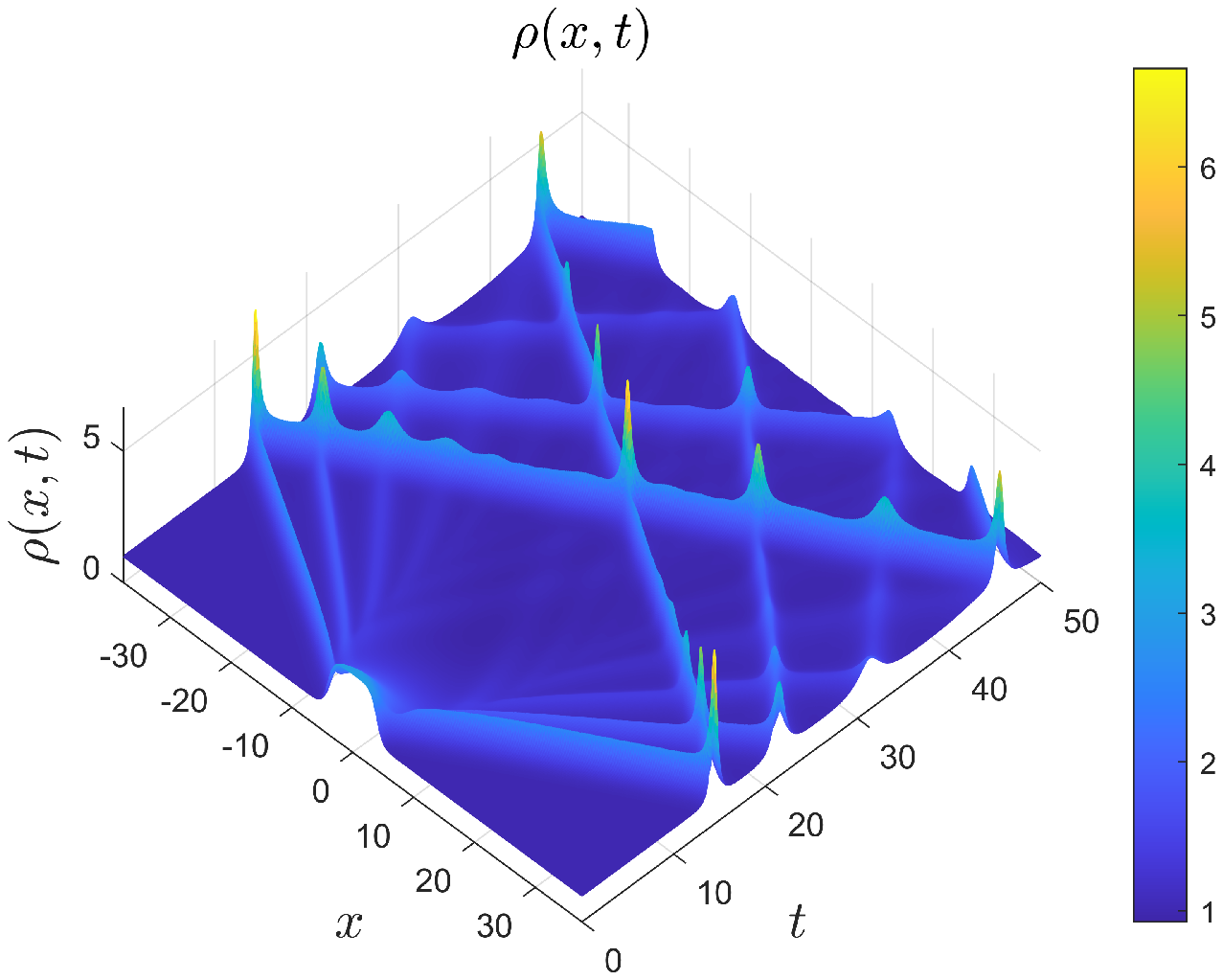}
	}
	%\subfigure[Height variable, view(0,90)]{\centering
		%		\includegraphics[width=0.45\textwidth]{Dam/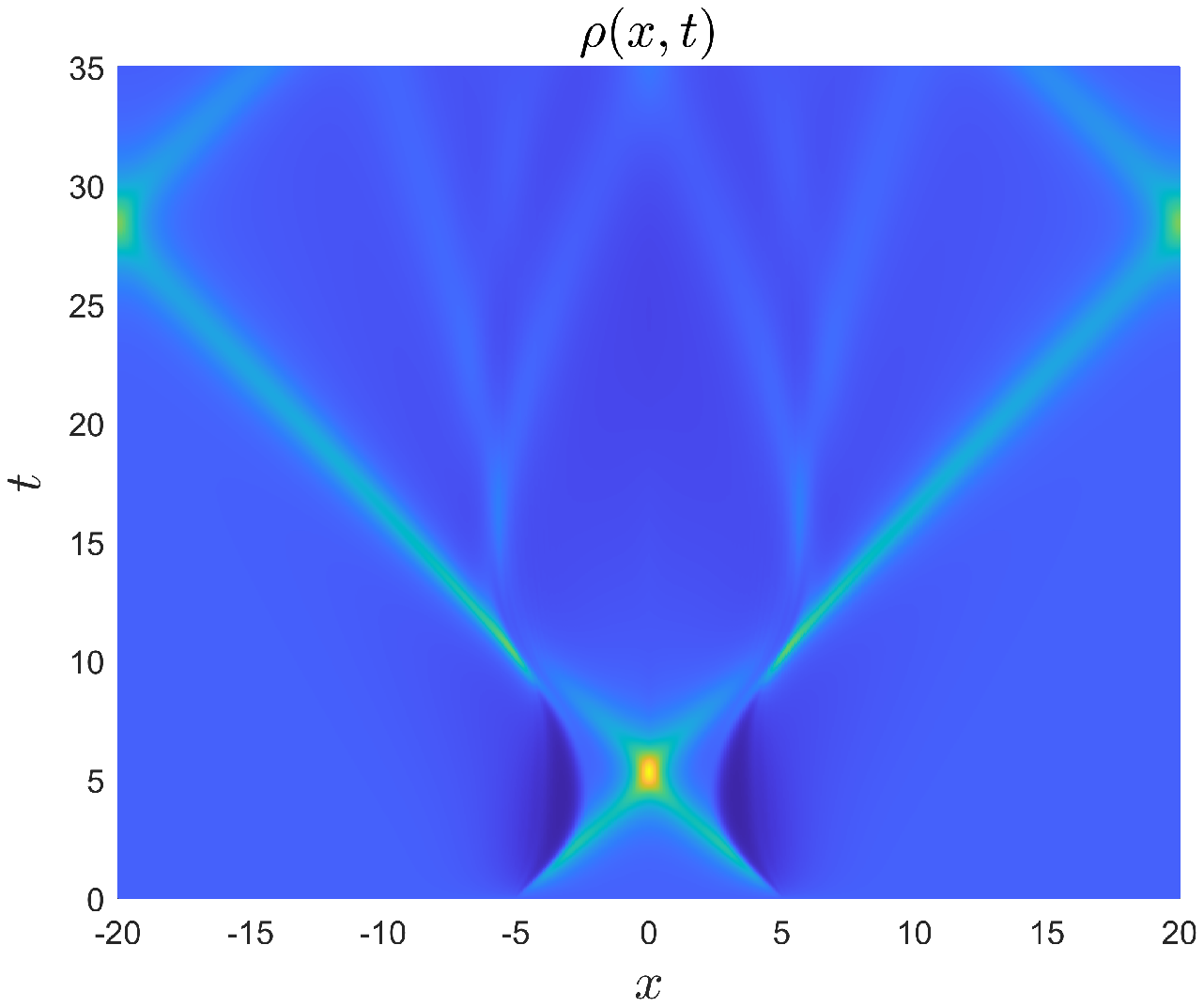}
		%	}
	\caption{The predicted dam-break solutions for the R2CH system in \textbf{Case} (\uppercase\expandafter{\romannumeral2}) show evolution of the velocity $u(x,t)$ and the height $\rho(x,t)$ with $t = 50$. } \label{fig3}
\end{figure}
\begin{figure}[htbp]
	\centering
	\subfigure[Velocity variable, view(45,70)]{\centering
		\includegraphics[width=0.45\textwidth]{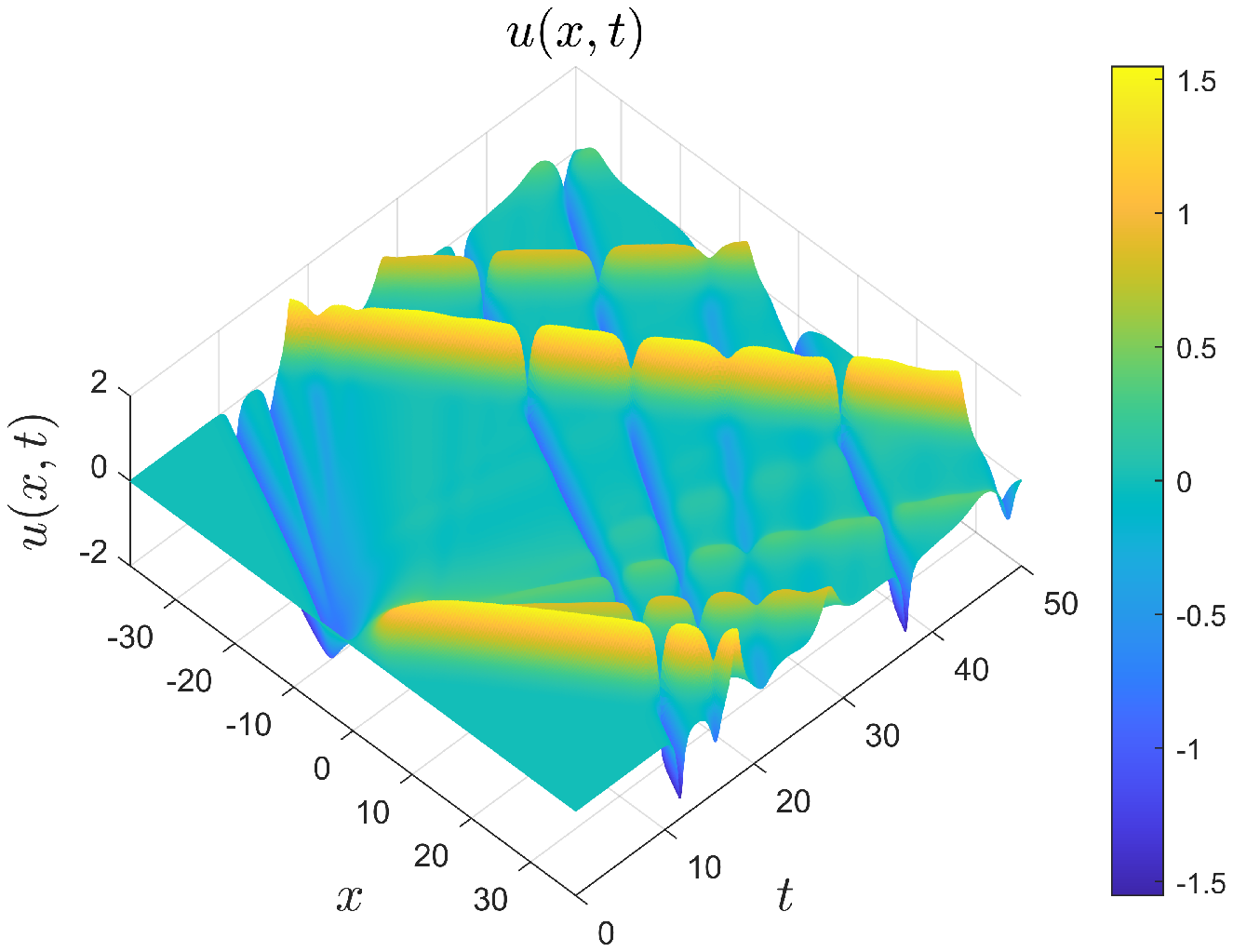}
	}
	%\subfigure[Velocity variable, view(0,90)]{\centering
		%		\includegraphics[width=0.45\textwidth]{Dam/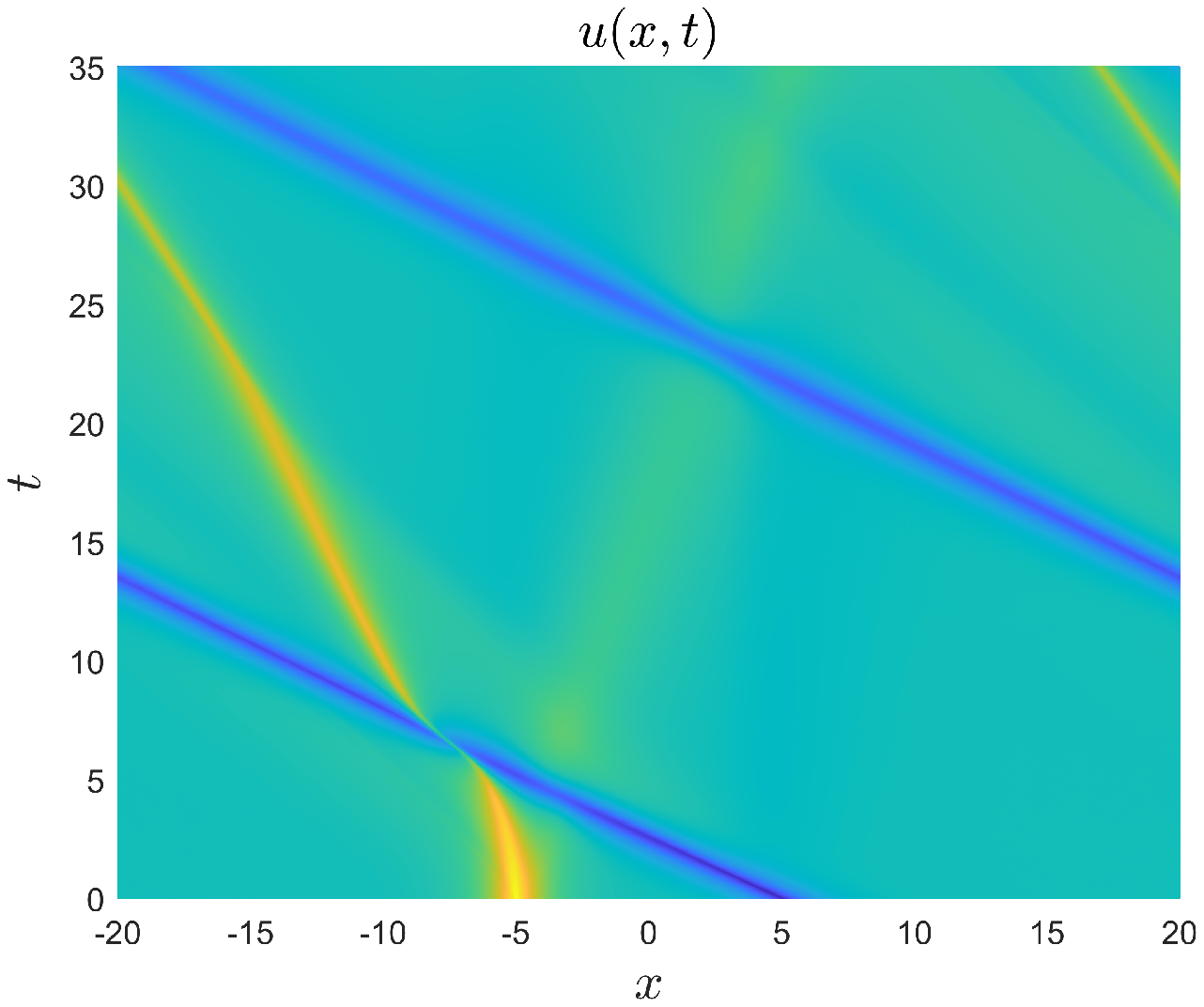}
		%	}
	\subfigure[Height variable, view(45,70)]{\centering
		\includegraphics[width=0.45\textwidth]{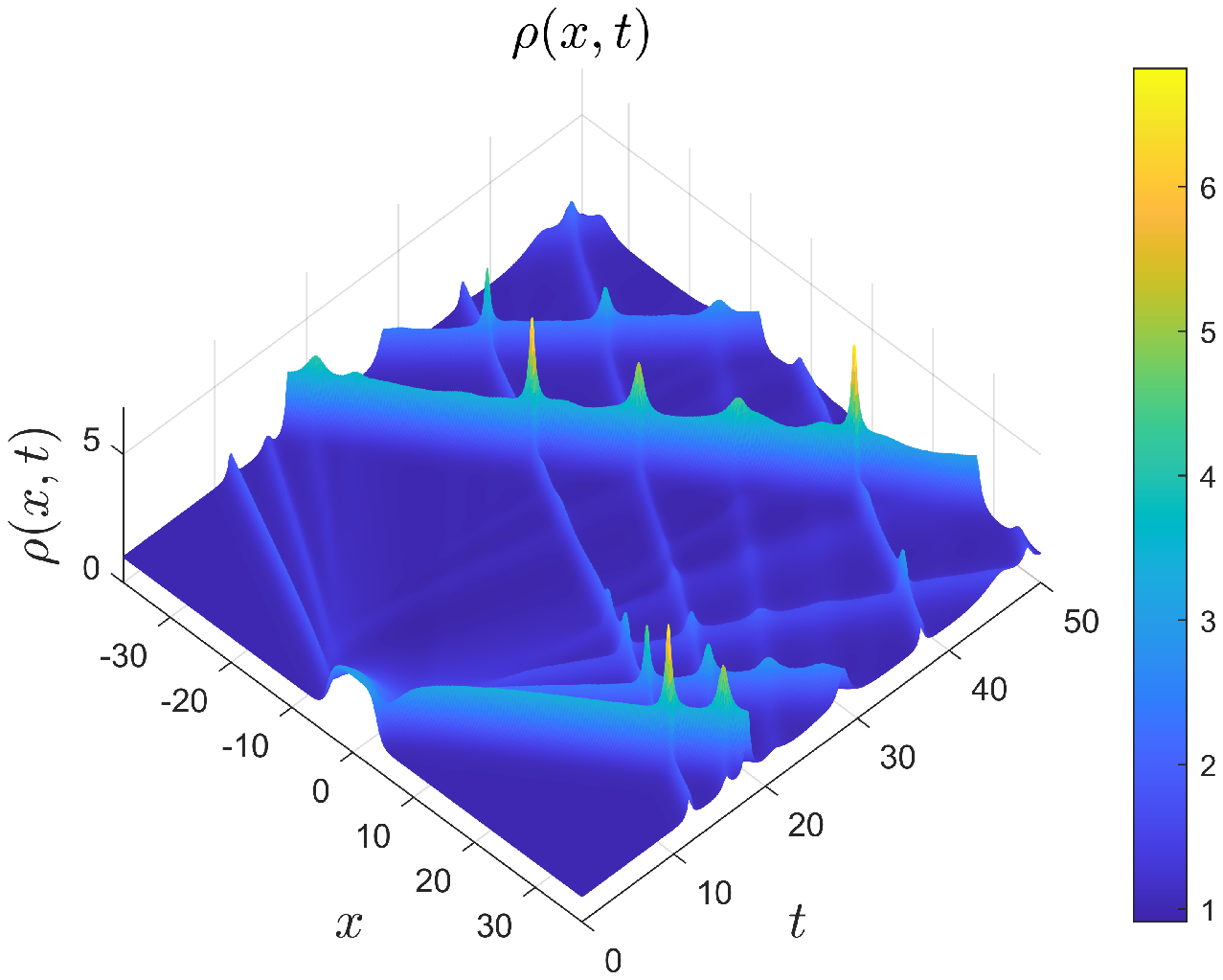}
	}
	%\subfigure[Height variable, view(0,90)]{\centering
		%		\includegraphics[width=0.45\textwidth]{Dam/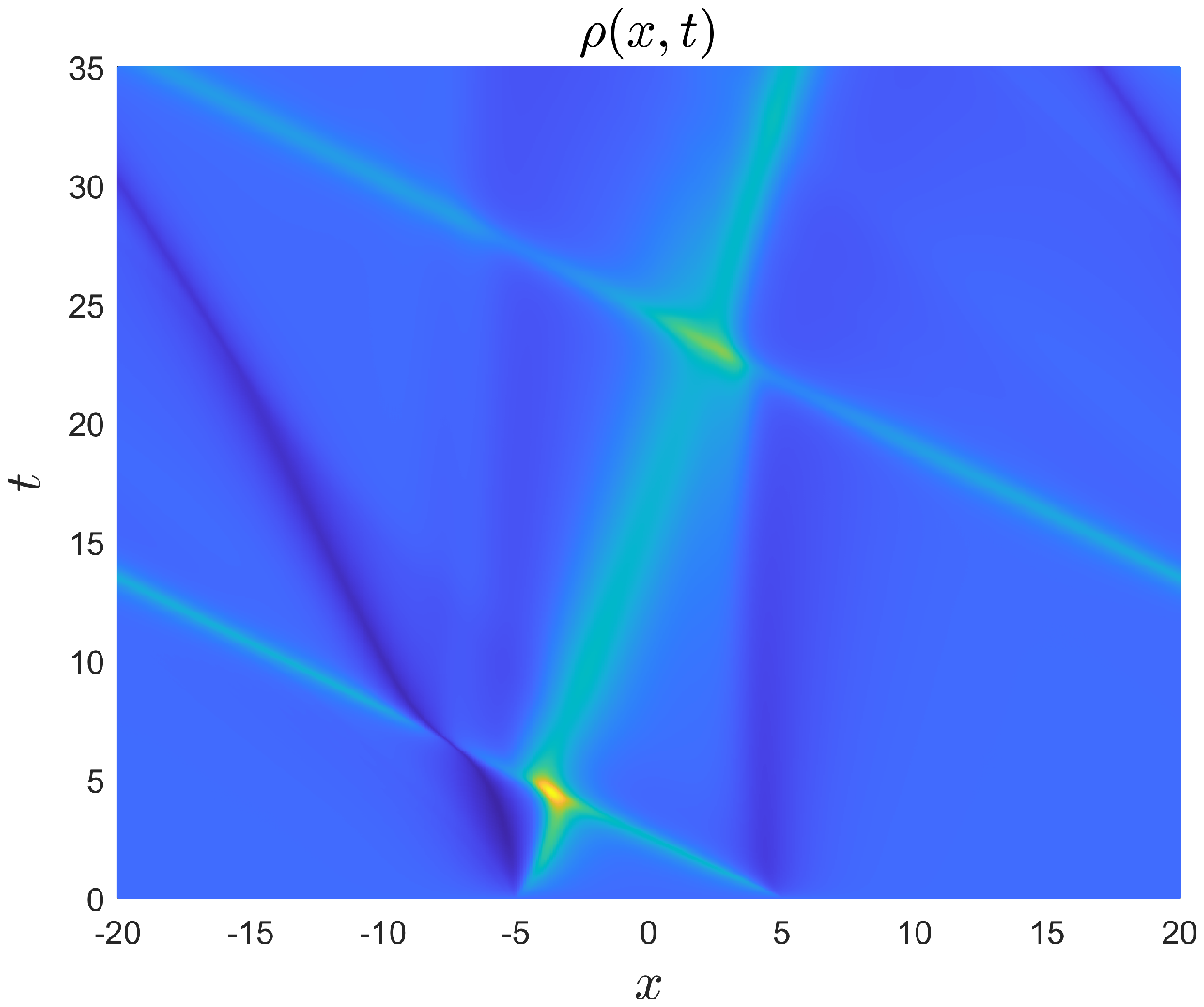}
		%	}
	\caption{The predicted dam-break solutions for the R2CH system in \textbf{Case} (\uppercase\expandafter{\romannumeral4}) show evolution of the velocity $u(x,t)$ and the height $\rho(x,t)$ with $t = 50$. } \label{fig4}
\end{figure}
	\subsection{Part II: nonsmooth initial data}\label{sec5_2}
In this part, we conduct numerical simulation for the R2CH system with nonsmooth initial data.
The parameters of the numerical examples come from the following three cases
\begin{itemize}
	\item \textbf{Case} (\uppercase\expandafter{\romannumeral1}). $A = 0$, $\mu = 0$, $\sigma = 1$, $\Omega = 0$;
	\item \textbf{Case} (\uppercase\expandafter{\romannumeral2}).  $A = 0$, $\mu = 0$, $\sigma = 1$, $\Omega = 0.1$;
	\item  \textbf{Case} (\uppercase\expandafter{\romannumeral3}). $A = 1$, $\mu = 1$, $\sigma = 1$, $\Omega = 73 \times 10^{-6}$.
\end{itemize}

\begin{example}[\textbf{Three-peakon interaction for the CH equation}\cite{JWG2020}, \cite{LP2016}, \cite{XS2008}, \cite{ZST2011}] We now test a degenerate R2CH system \eqref{eq1.1}--\eqref{eq1.2} by taking $\rho=A=\mu=0$ and $\sigma=1$, which will reduce to the classical CH equation. Consider the three-peakon interaction with the following initial condition
	\begin{align*}
		u(x,0) = \phi_1(x) + \phi_2(x) + \phi_3(x),
	\end{align*}
	where
	\begin{equation*}
		\phi_i(x) = \left\{
		\begin{array}{ll}
			\displaystyle\frac{c_i}{\cosh(L/2)}\cosh(x-x_i),\quad |x-x_i| \leqslant L/2,  \\ [4\jot]
			\displaystyle \frac{c_i}{\cosh(L/2)}\cosh(L-(x-x_i)),\quad |x-x_i| > L/2, \\
		\end{array}
		\right. \quad i = 1,\,2,\,3.
	\end{equation*}
	The parameters are given by $c_1 = 2$, $c_2 = 1$, $c_3 = 0.8$, $x_1 = -5$, $x_2 = -3$, $x_3 = -1$  and the computational domain is $[0,L]$ with $L = 30$.
\end{example}
In the calculation, we take the spatial stepsize $h = L/2048$ and temporal stepsize
$\tau = 1/10000$ to simulate this interaction at $t = 0,\,1,\,2,\,3,\,4,\,6,\,8,\,10$, respectively. Figure \ref{fig5} shows the moving peak interaction at different instants of time. It is clear to see that the proposed scheme performs well in resolving the complex interaction among multiple peakons for the CH equation. The results obtained here are comparable to those obtained by a fourth-order HIEQ-GM method in \cite{JWG2020} and a MSWCM-AD30 method in \cite{ZST2011}.
\begin{figure}[htbp]
	\vspace{-10mm}
	\centering
	\subfigure{\centering
		\includegraphics[width=0.4\textwidth]{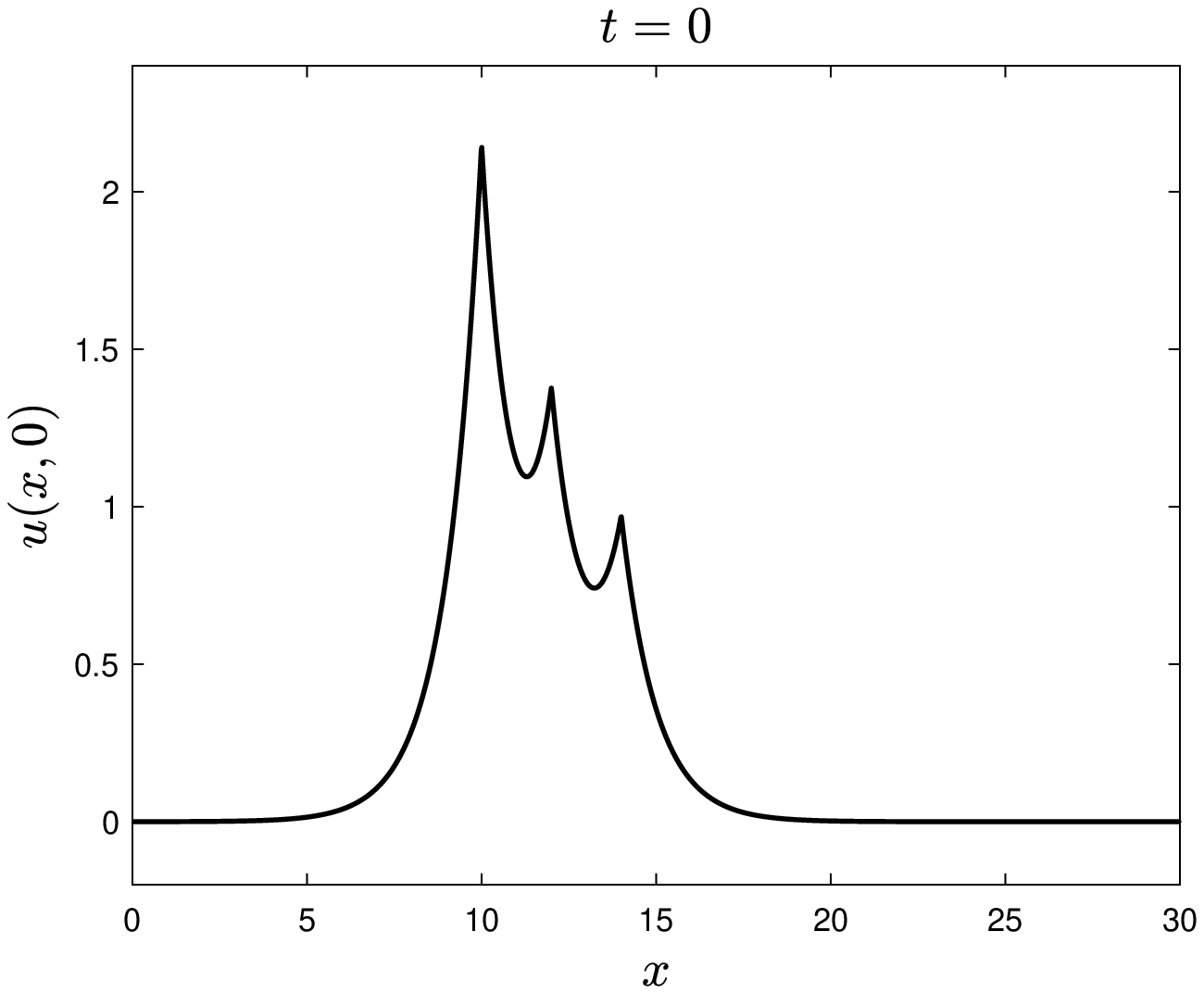}
	}\hspace{1.5mm}\subfigure{\centering
		\includegraphics[width=0.4\textwidth]{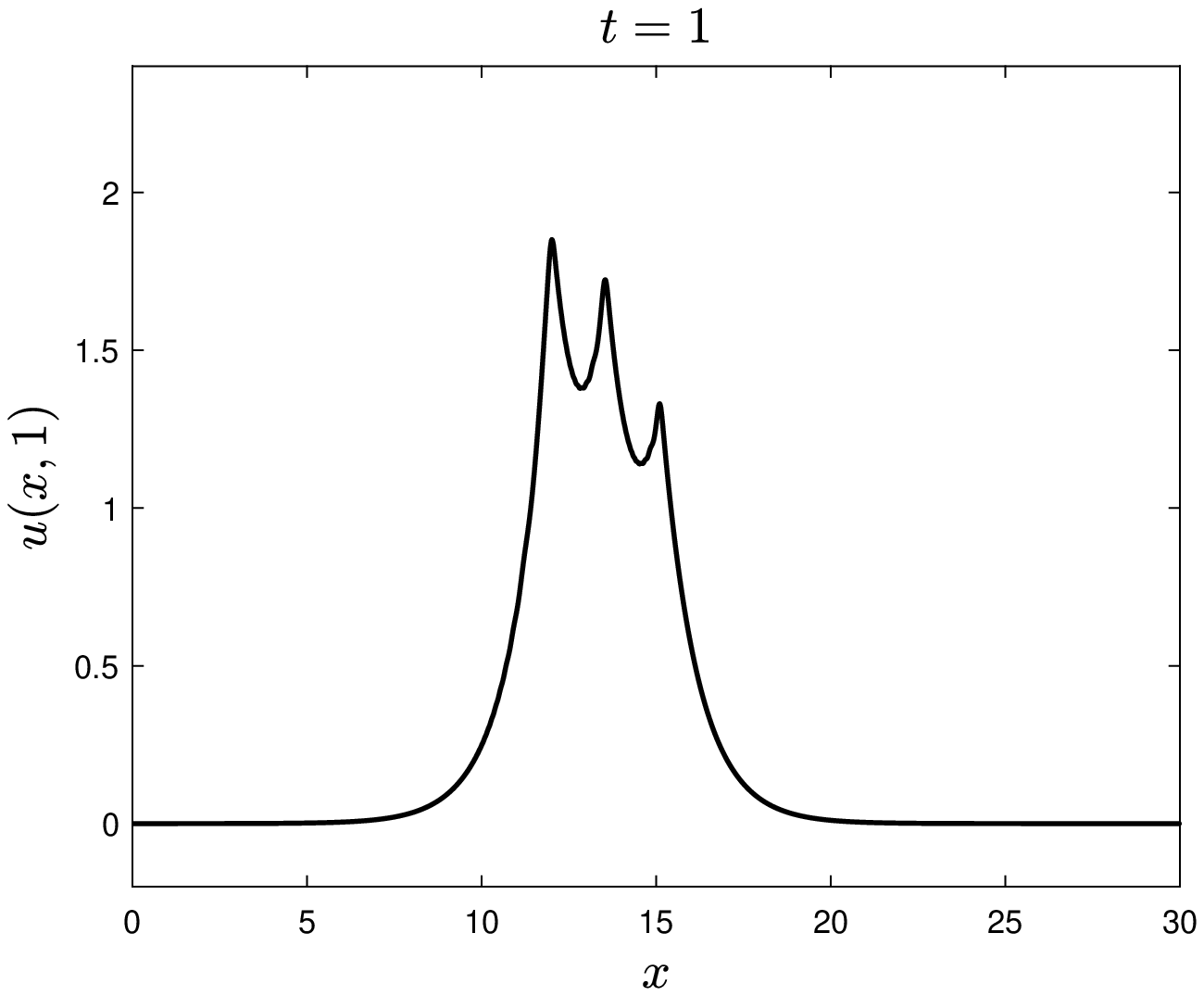}
	}\\
	\subfigure{\centering
		\includegraphics[width=0.4\textwidth]{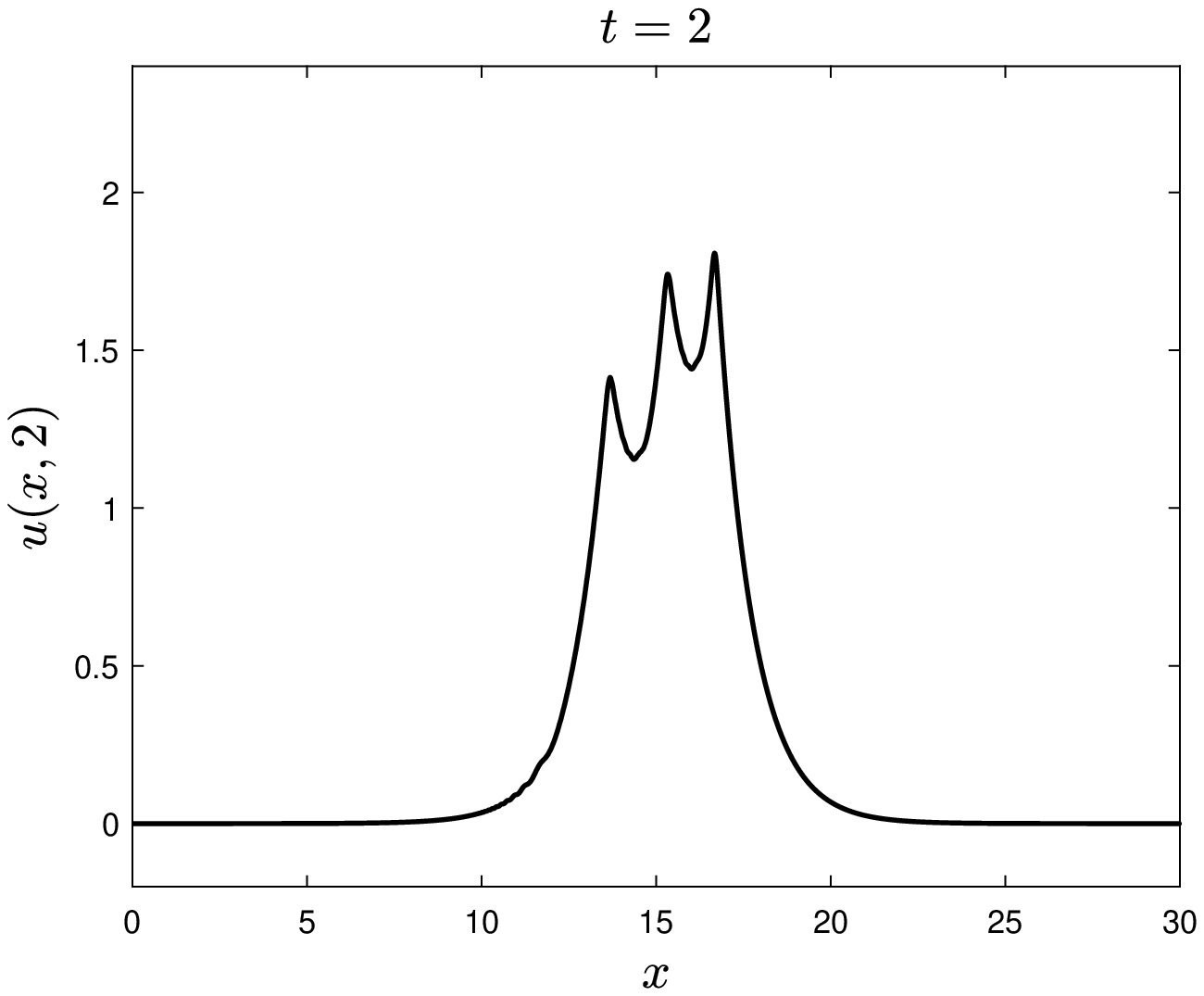}
	}\hspace{1.5mm}\subfigure{\centering
		\includegraphics[width=0.4\textwidth]{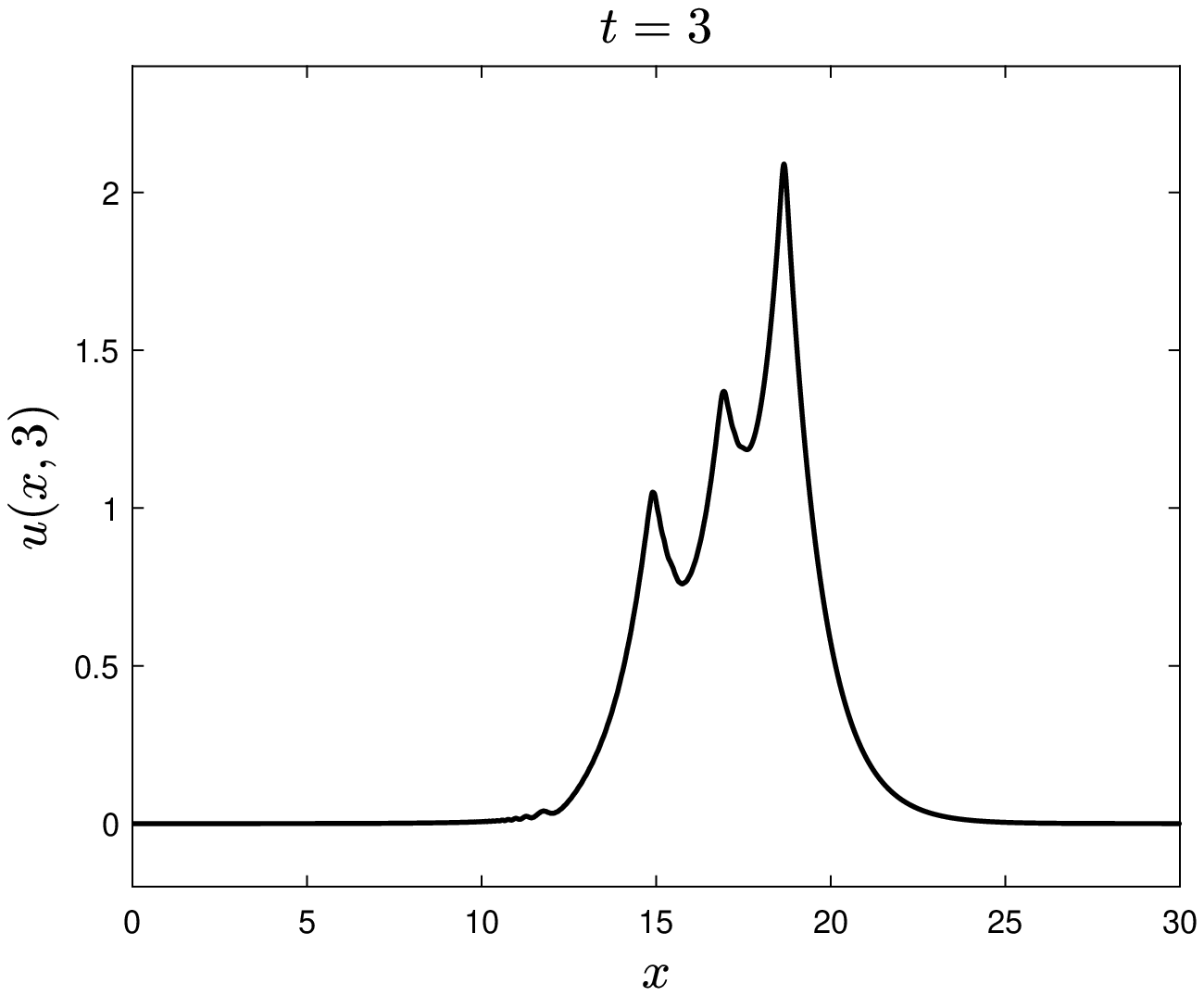}
	}\\
	\subfigure{\centering
		\includegraphics[width=0.4\textwidth]{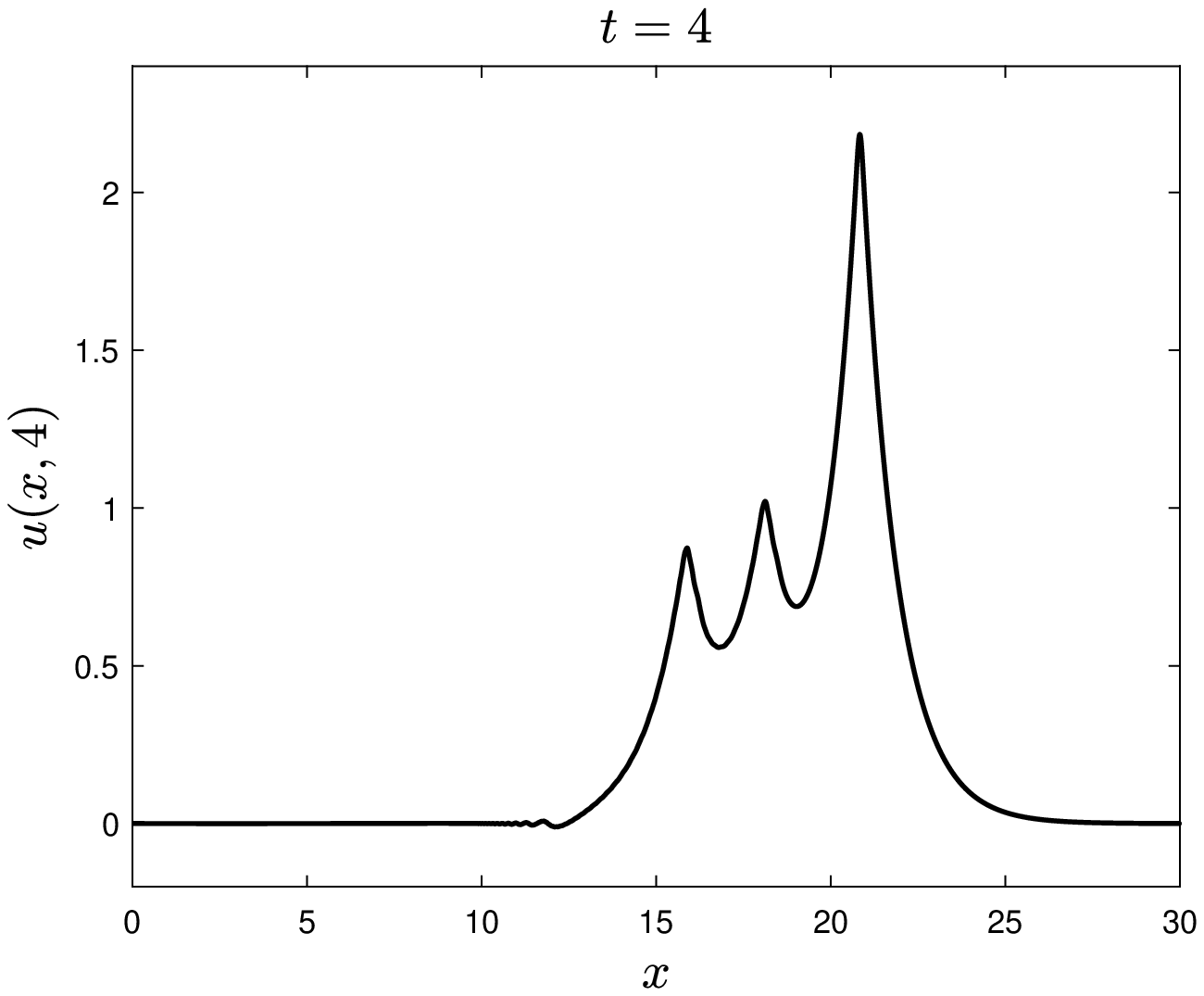}
	}\hspace{1.5mm}\subfigure{\centering
		\includegraphics[width=0.4\textwidth]{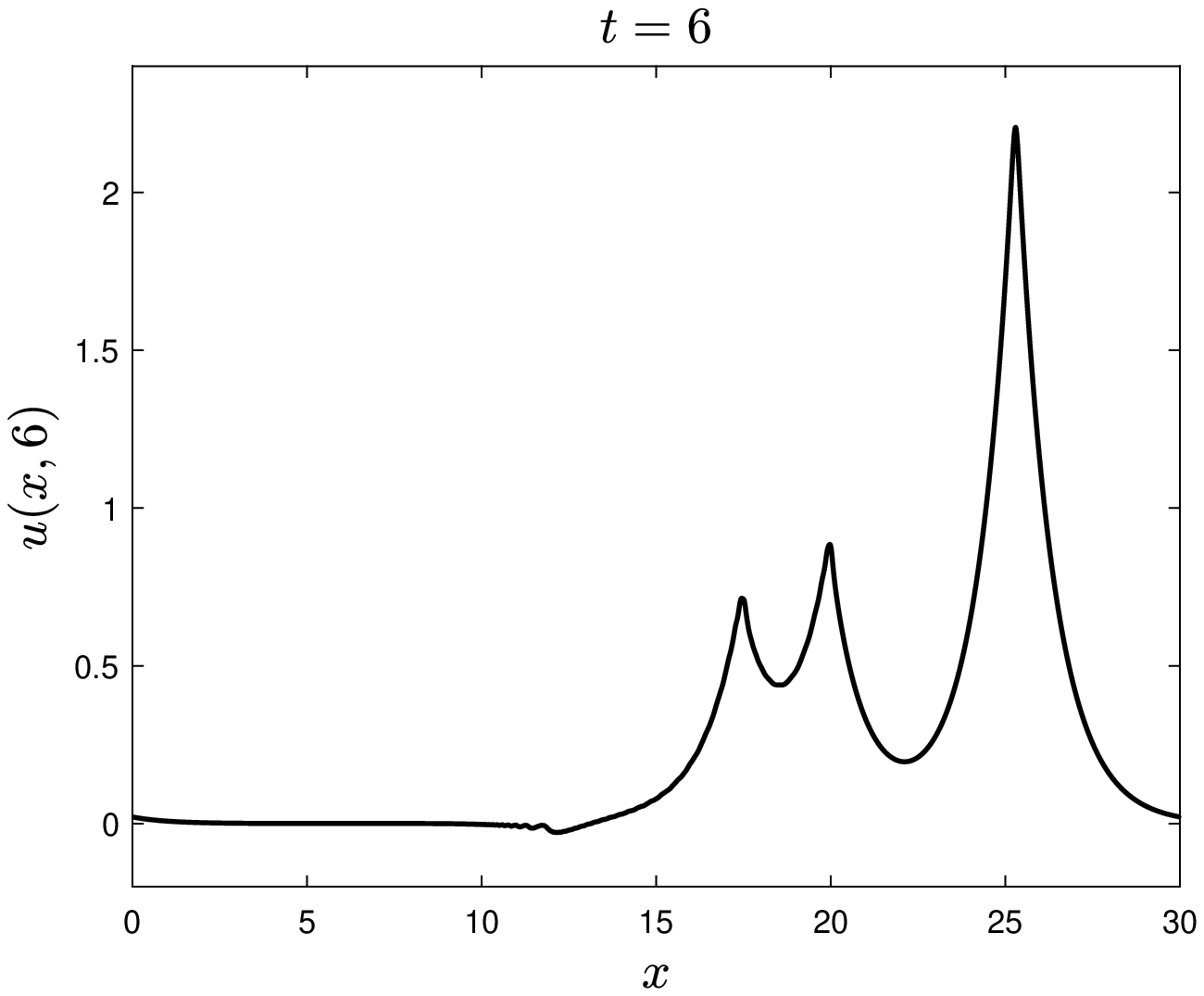}
	}\\
	\subfigure{\centering
		\includegraphics[width=0.4\textwidth]{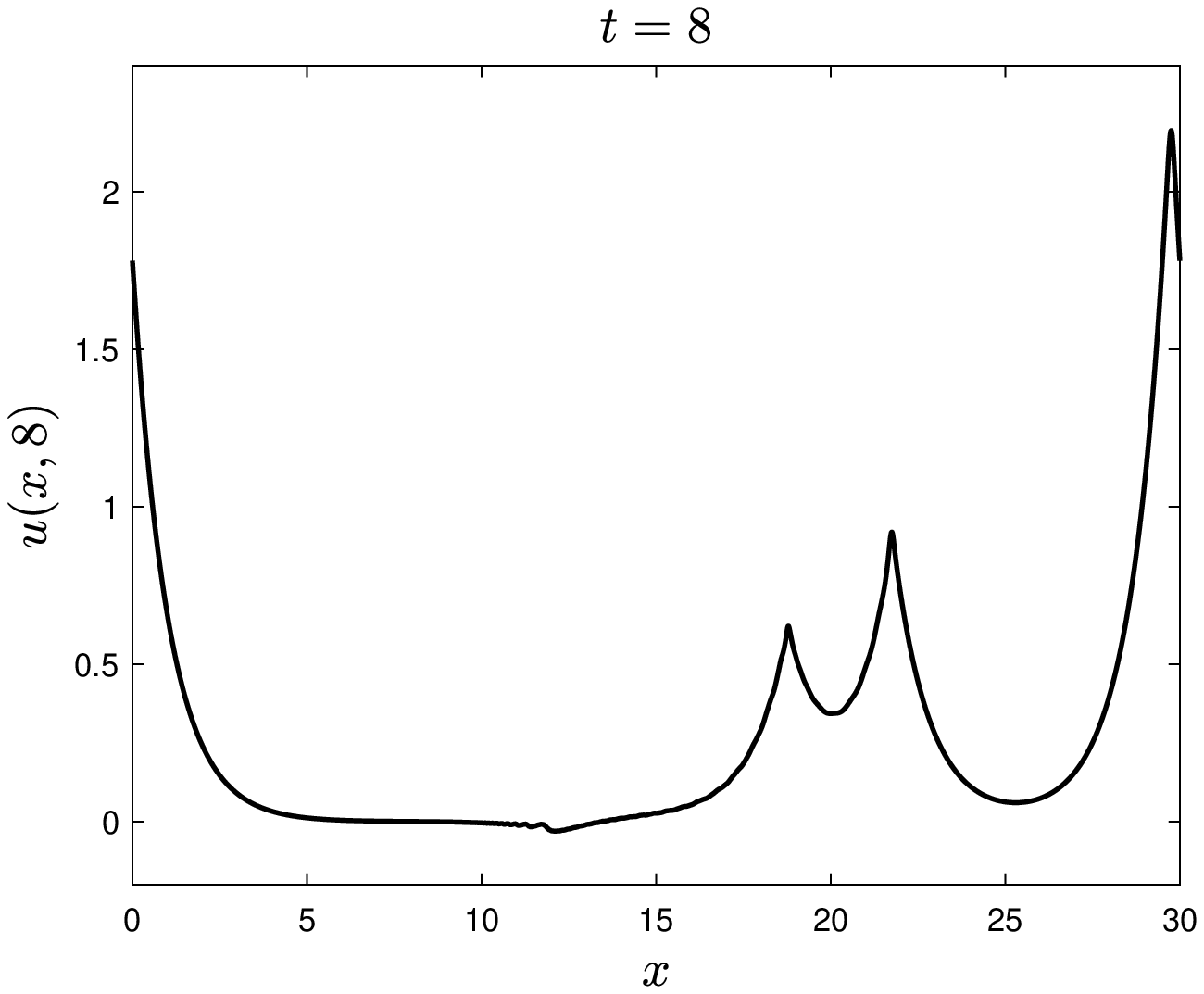}
	}\hspace{1.5mm}\subfigure{\centering
		\includegraphics[width=0.4\textwidth]{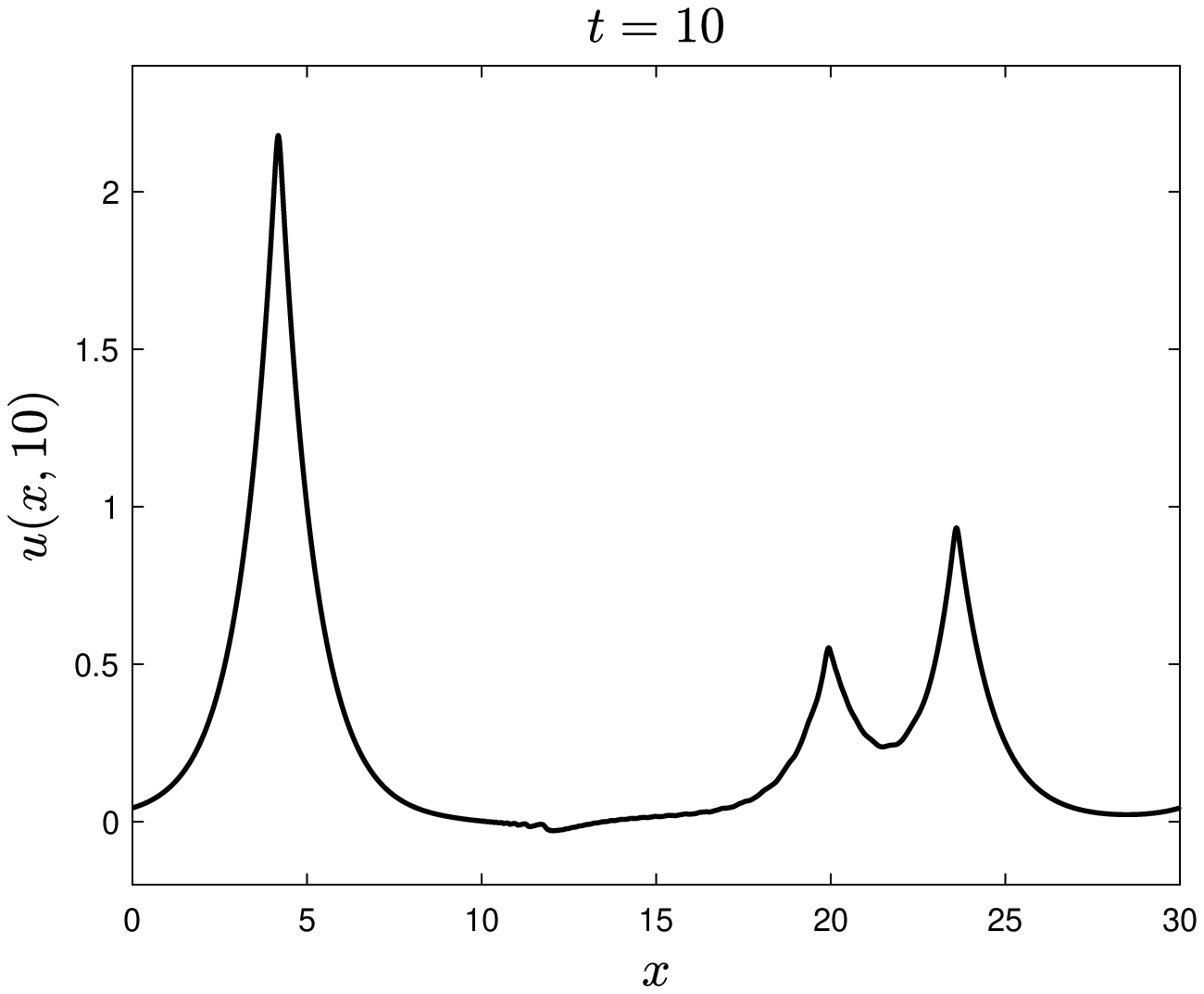}
	}\caption{Three-peakon interaction of the CH equation at $t=0,\,1,\,2,\,3,\,4,\,6,\,8,\,10$, respectively.}	
	\label{fig5}
\end{figure}
\begin{example}[\textbf{Single peakon}\cite{CKL2020}] We now consider a single peakon solution of the  R2CH system \eqref{eq1.3}--\eqref{eq1.4} with the initial conditions
	\begin{align*}
		u(x,0) = e^{-|x-x_0|}, \quad \rho (x,0) = 0.5,
	\end{align*}
	where $x_0 = 10$ is used and the computational domain is prescribed as $[0,20]$.
\end{example}
The parameters in \textbf{Case} (\uppercase\expandafter{\romannumeral1}) and \textbf{Case} (\uppercase\expandafter{\romannumeral3}) are taken respectively to show the behavior of the single peakon. We take spatial stepsize $h=0.025$ and temporal stepsize $\tau=0.0005$ in the calculation. Figures \ref{fig6} and \ref{fig7} depict the predicted solution profiles of the velocity variable $u(x,t)$ and height variable $\rho(x,t)$ at $t=1$, $3$ and $5$, respectively, under \textbf{Case} (\uppercase\expandafter{\romannumeral1}) and \textbf{Case} (\uppercase\expandafter{\romannumeral3}). In contrast, we observe that the selection of parameters has remarkable impact on the profiles of solutions to the system \eqref{eq1.3}--\eqref{eq1.4}. Figure \ref{fig8} shows the computed results of discrete conserved quantities \eqref{eq2.5(b)}--\eqref{eq2.7(b)} at time $t=5$ for \textbf{Case} (\uppercase\expandafter{\romannumeral1}) and \textbf{Case} (\uppercase\expandafter{\romannumeral3}). One can easily observe that the scheme \eqref{eq3.1}--\eqref{eq3.3} guarantees the conservation of mass, momentum and energy for different parameters. Moreover, One can also see that the rotational parameter $\Omega$ indeed changes the conserved quantities $E^n$ and $I_2^n$, which is consistent with \eqref{eq2.6(b)}--\eqref{eq2.7(b)}.

\begin{figure}[htbp]
	\centering
	\subfigure[Velocities]{\centering
		\includegraphics[width=0.40\textwidth]{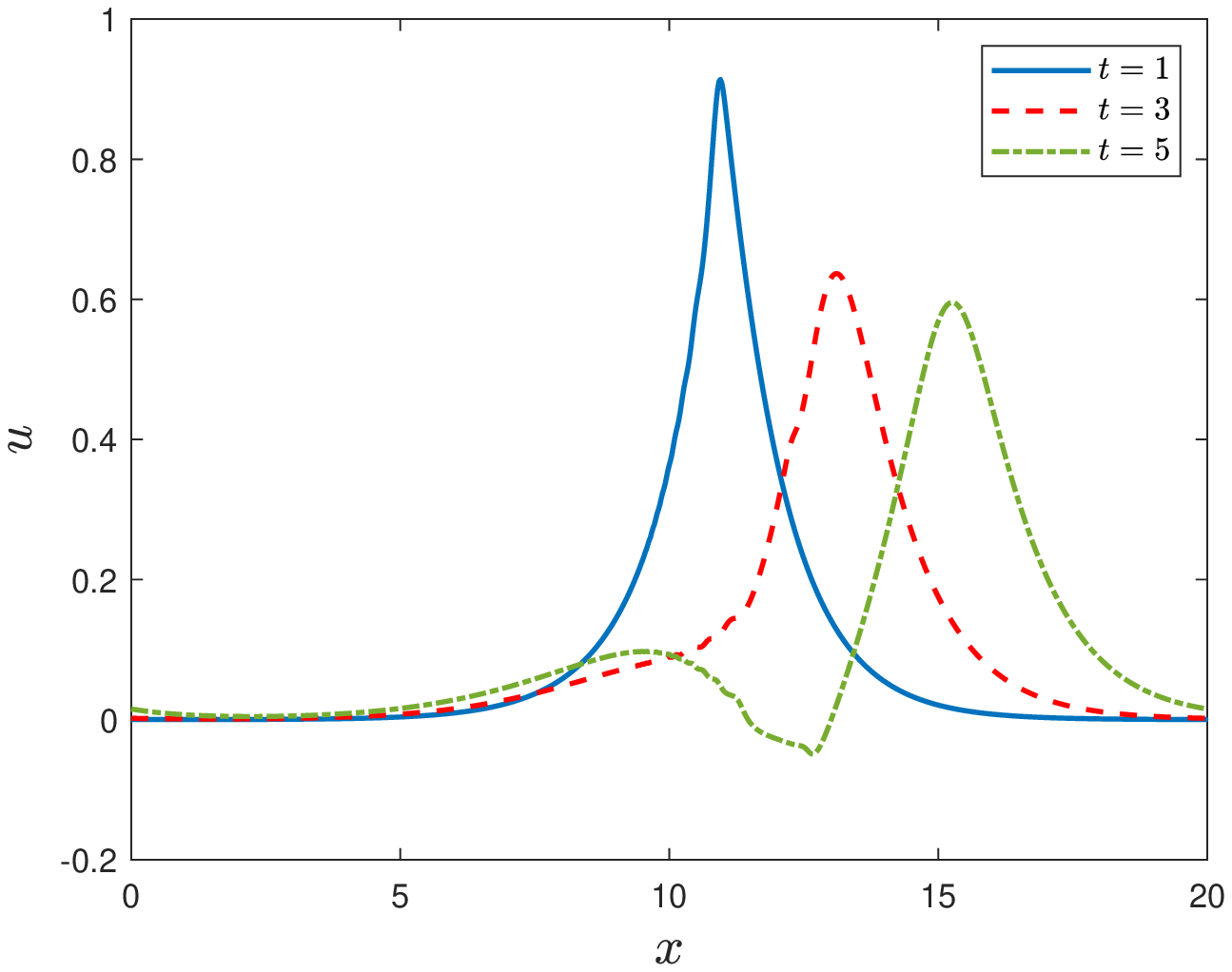}
	}\hspace{-6mm}\subfigure[Heights]{\centering
		\includegraphics[width=0.40\textwidth]{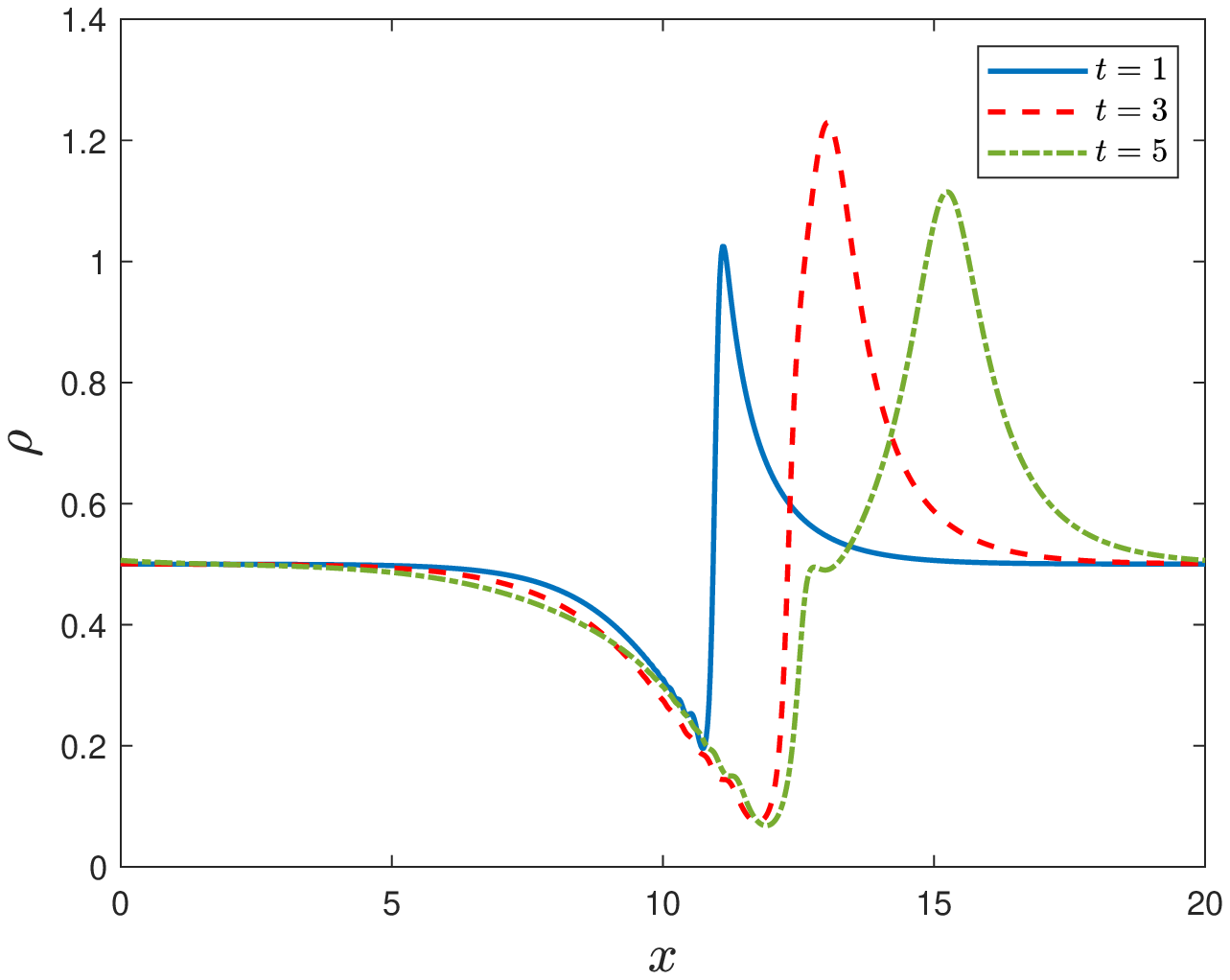}
	}
	\caption{Velocities $u(x,t)$ and heights $\rho(x,t)$ for the R2CH system in \textbf{Case} (\uppercase\expandafter{\romannumeral1}) computed by scheme \eqref{eq3.1}--\eqref{eq3.3} at three different times with stepsizes $h=0.025$ and $\tau = 0.0005$.} \label{fig6}
\end{figure}

\begin{figure}[htbp]
	\centering
	\subfigure[Velocities]{\centering
		\includegraphics[width=0.4\textwidth]{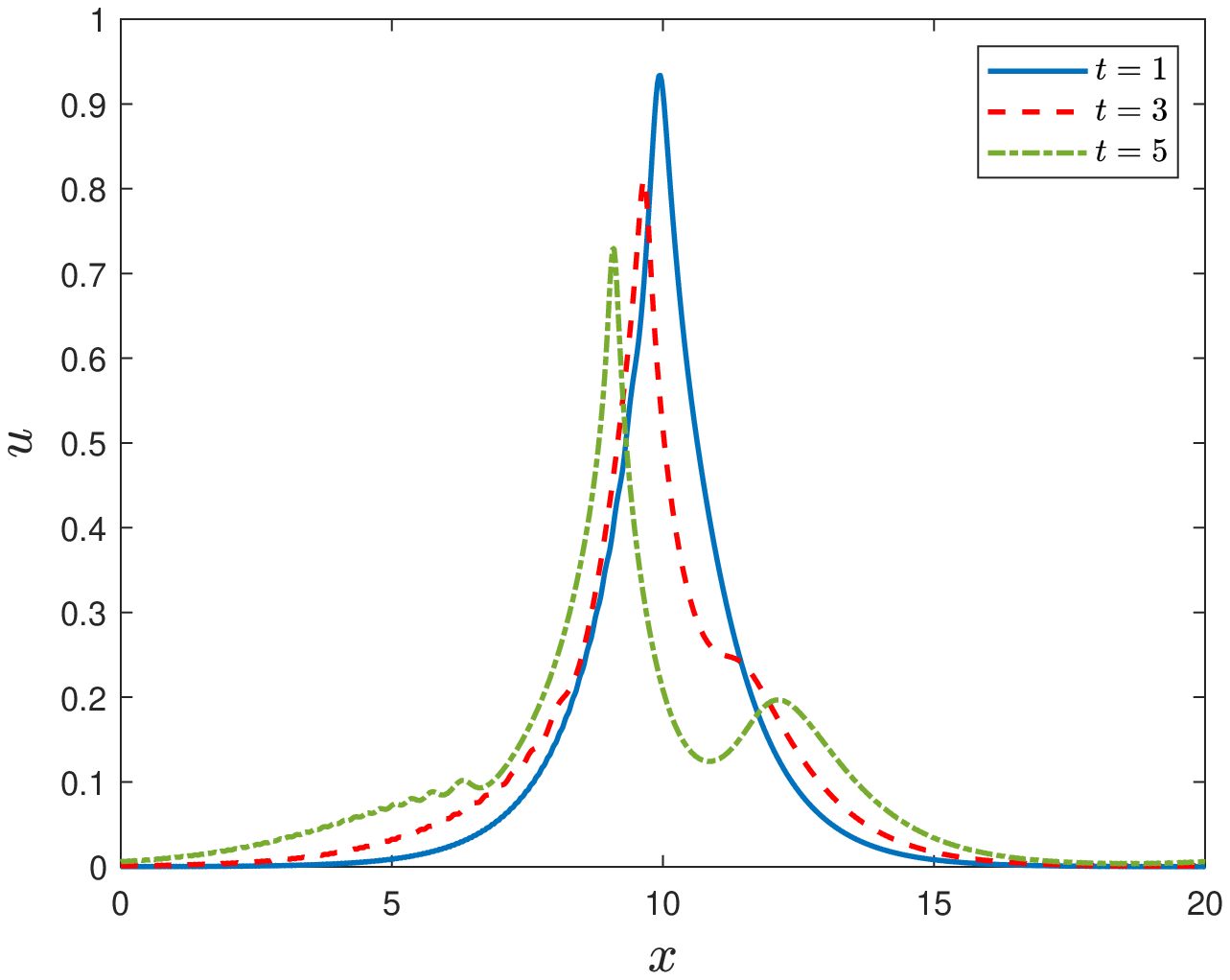}
	}\hspace{-6mm}\subfigure[Heights]{\centering
		\includegraphics[width=0.4\textwidth]{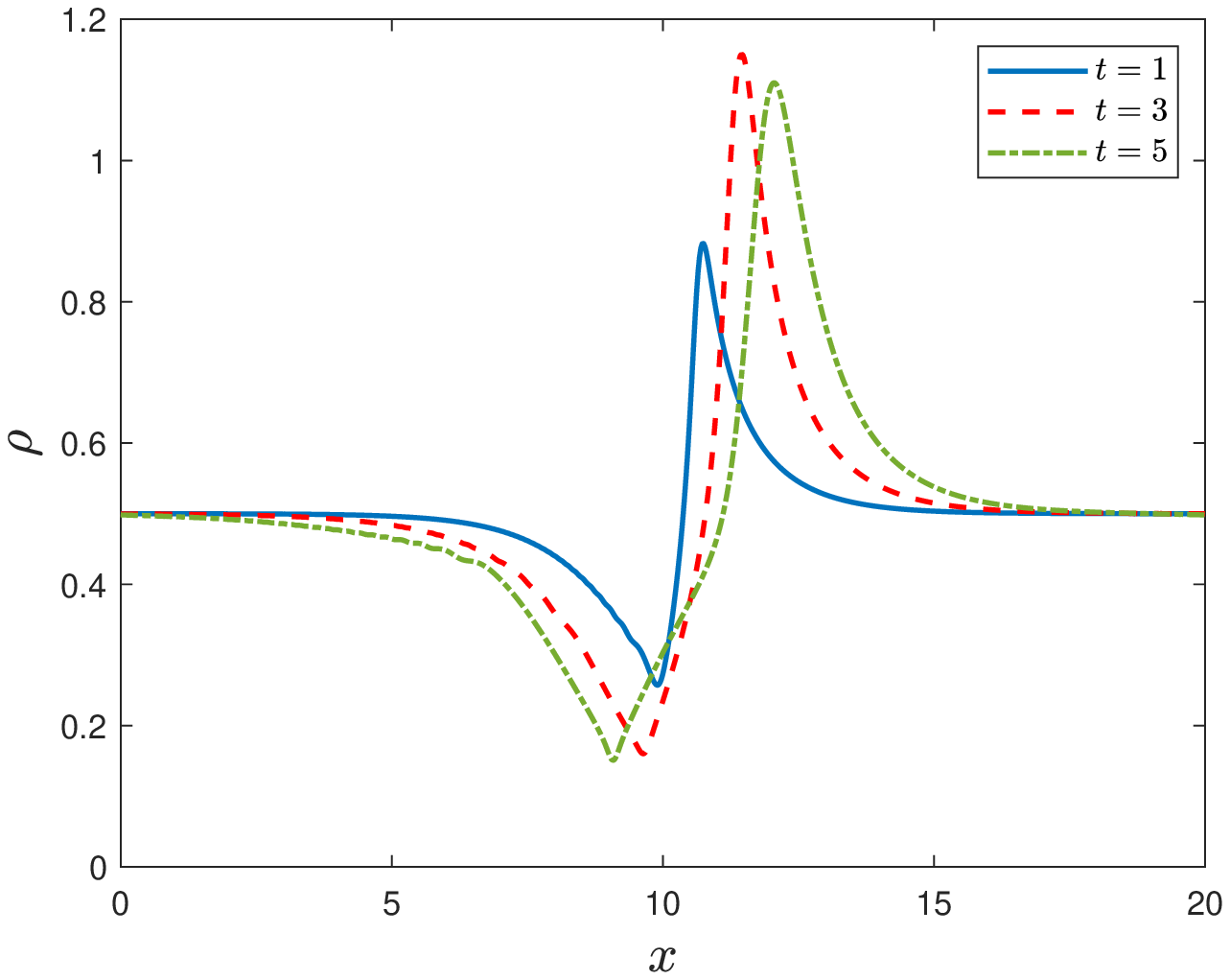}
	}
	\caption{Velocities $u(x,t)$ and heights $\rho(x,t)$  for the R2CH system in \textbf{Case} (\uppercase\expandafter{\romannumeral3}) computed by scheme \eqref{eq3.1}--\eqref{eq3.3} at three different times with stepsizes $h=0.025$ and $\tau = 0.0005$. } \label{fig7}
\end{figure}

\begin{figure}[htbp]
	\centering
	\subfigure[\textbf{Case} (\uppercase\expandafter{\romannumeral1})]{\centering
		\includegraphics[width=0.4\textwidth]{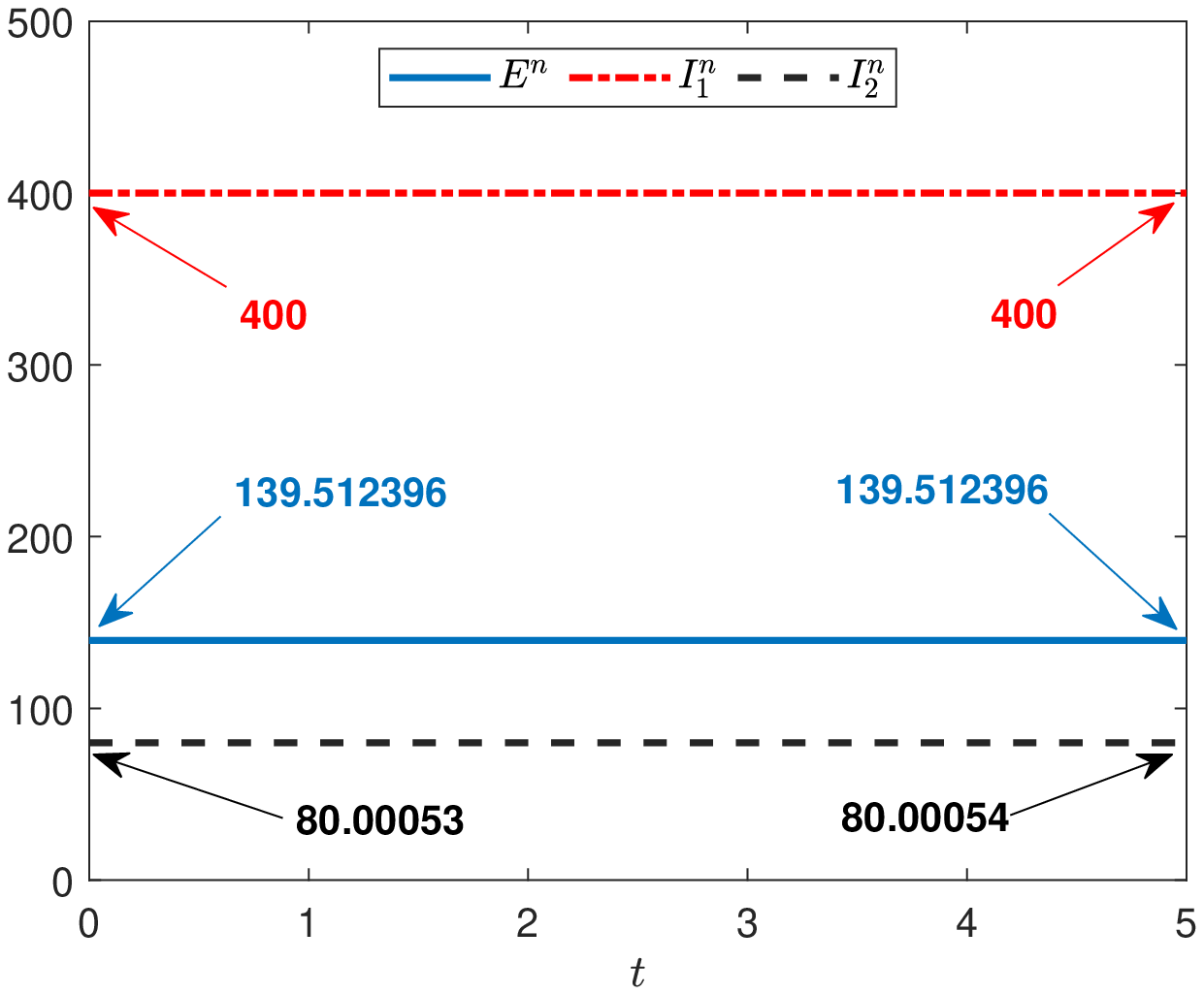}
	}\hspace{-6mm}\subfigure[\textbf{Case} (\uppercase\expandafter{\romannumeral3})]{\centering
		\includegraphics[width=0.4\textwidth]{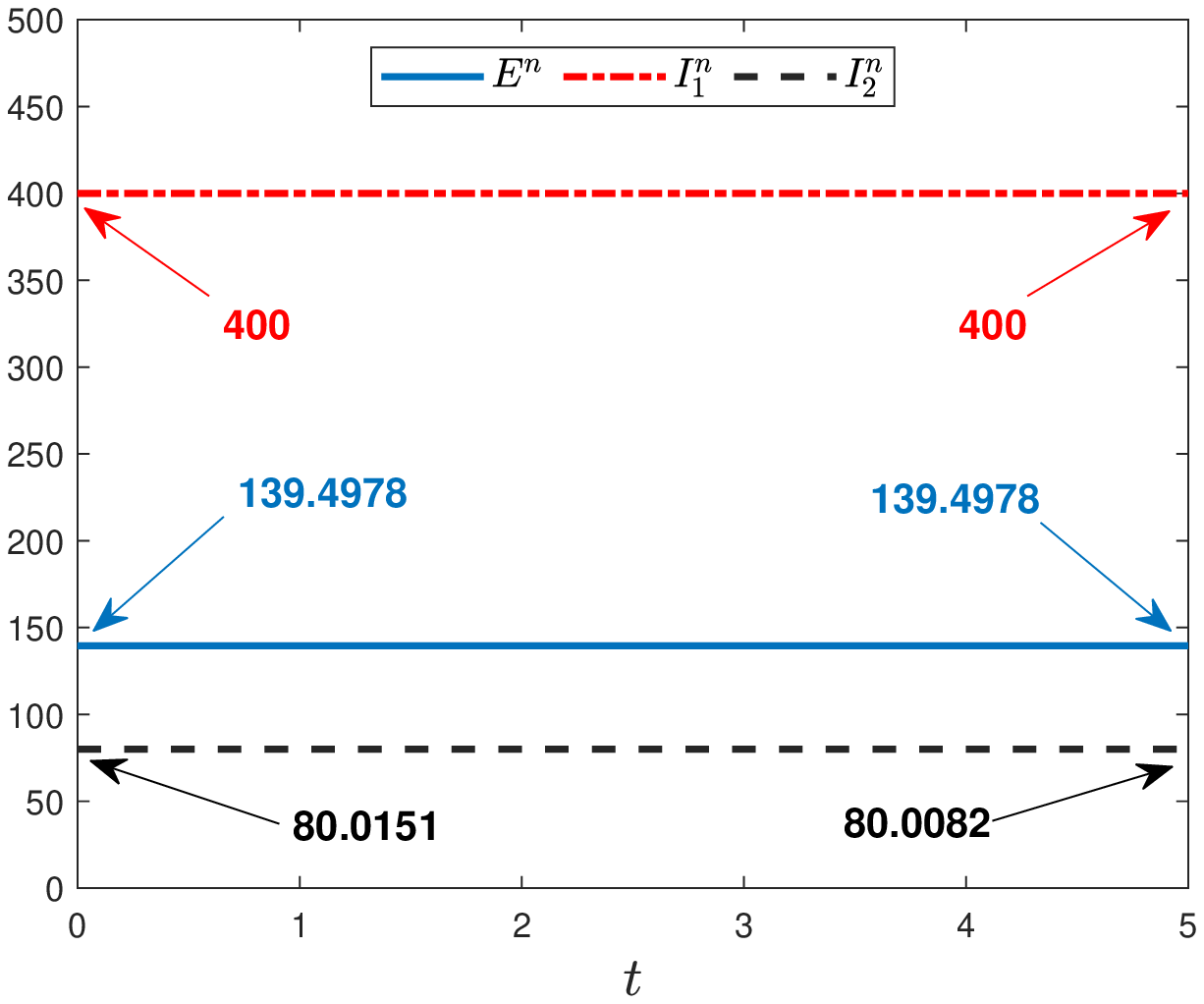}
	}
	\caption{Conserved quantities defined in \eqref{eq2.5(b)}--\eqref{eq2.7(b)} for the R2CH system in \textbf{Case} (\uppercase\expandafter{\romannumeral1}) and \textbf{Case} (\uppercase\expandafter{\romannumeral3}) with stepsizes $h=0.025$ and $\tau = 0.0005$.} \label{fig8}
\end{figure}

\begin{example}[\textbf{Peakon anti-peakon interaction--I}\cite{LQZS2017}] \label{Exam5.4} We consider a piecewise smooth initial conditions
	\begin{align*}
		u(x,0) = \left\{
		\begin{array}{ll}
			\displaystyle\frac{1}{2\sinh (\frac{1}{4})}\sinh(x),\quad  0 \leqslant x \leqslant \frac{1}{4},   \\  [3\jot]
			\displaystyle\frac{1}{\sinh(-\frac{1}{2})}\sinh(x-\frac{1}{2}), \quad \frac{1}{4} < x \leqslant \frac{3}{4},  \qquad \rho(x,0) = 1.5.\\  [3\jot]
			\displaystyle\frac{1}{2\sinh(\frac{1}{4})}\sinh(x-1), \quad \frac{3}{4} < x < 1, \\
		\end{array}
		\right.
	\end{align*}
\end{example}
To test the influence of $\Omega$ on the solution of the R2CH system \eqref{eq1.3}--\eqref{eq1.4}, the parameters in \textbf{Cases} (\uppercase\expandafter{\romannumeral1}) and (\uppercase\expandafter{\romannumeral2}) are selected for calculation in this example. Figure \ref{fig11} and Figure \ref{fig12} respectively display the profiles of solutions of the velocity variable $u(x,t)$ and the height variable $\rho(x,t)$ at different instants of time, which enables us to observe how the solutions evolve over time. In addition, it can be observed from Figure \ref{fig11}(a) and Figure \ref{fig12}(a) that $u(x,t)$ at initial time is jump discontinuous at $x=1/4$ and $x=3/4$. Compared with \textbf{Case} (\uppercase\expandafter{\romannumeral1}), the symmetry is broken in \textbf{Case} (\uppercase\expandafter{\romannumeral2}).  Figure \ref{fig13} shows the results of discrete conserved quantities defined in \eqref{eq2.5(b)}--\eqref{eq2.7(b)} for \textbf{Cases} (\uppercase\expandafter{\romannumeral1}) and  (\uppercase\expandafter{\romannumeral2}). It is observed that the total momentum is zero when the solution is symmetric, while it is nonzero when the solution is asymmetric. Moreover, we portray the evolution of the peakon-antipeakon for the velocity $u(x,t)$ and the height $\rho(x,t)$ in Figures \ref{fig14}--\ref{fig15}. It can be seen from Figure \ref{fig14} that the evolution of solutions presents a certain periodicity in the long time simulation,
while the evolution of solutions is drawn in a short time in Figure \ref{fig15} since it will blow up in a slightly long time.

\begin{figure}[htbp]
	\centering
	\subfigure[Velocities]{\centering
		\includegraphics[width=0.4\textwidth]{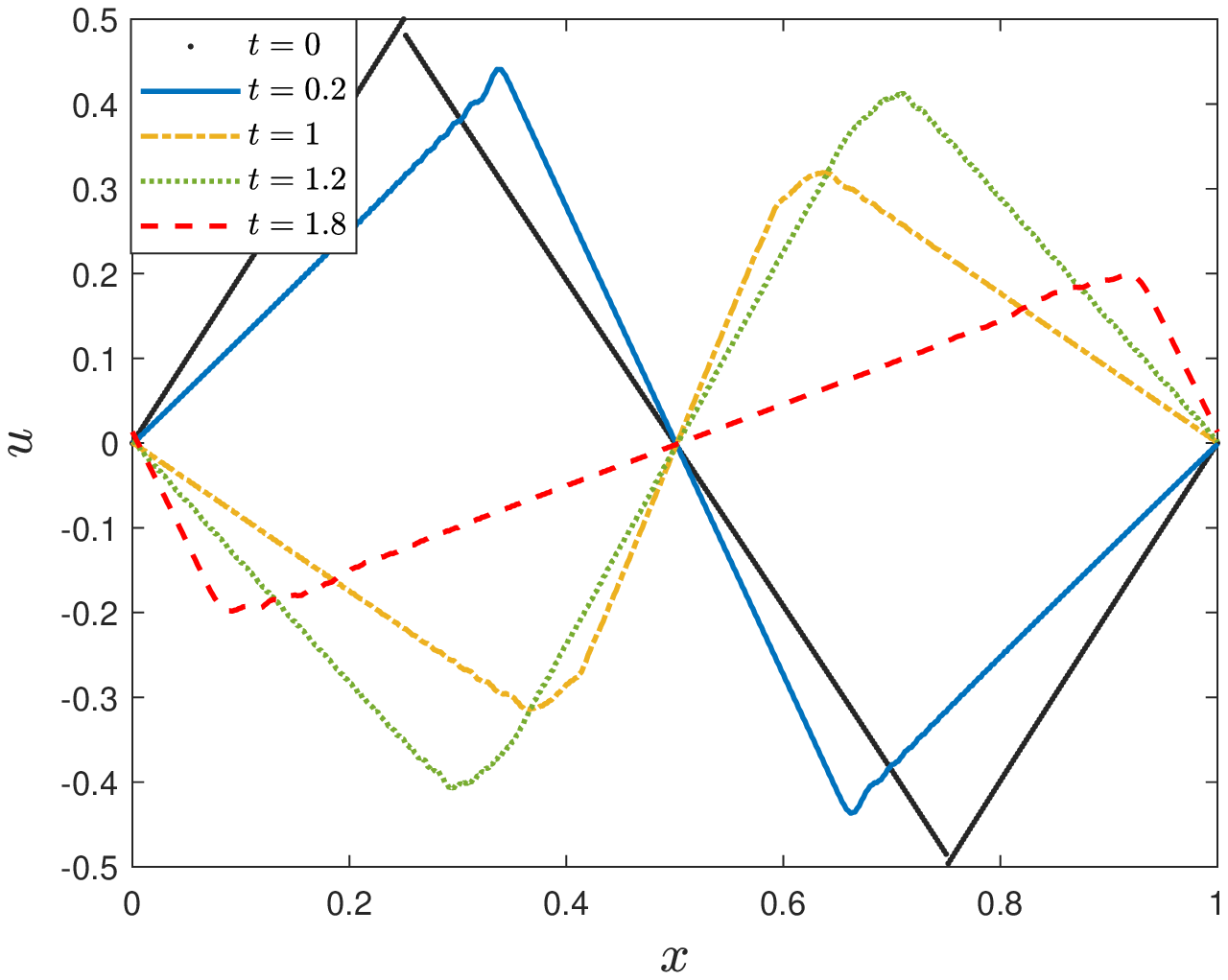}
	}\hspace{-6mm}\subfigure[Heights]{\centering
		\includegraphics[width=0.4\textwidth]{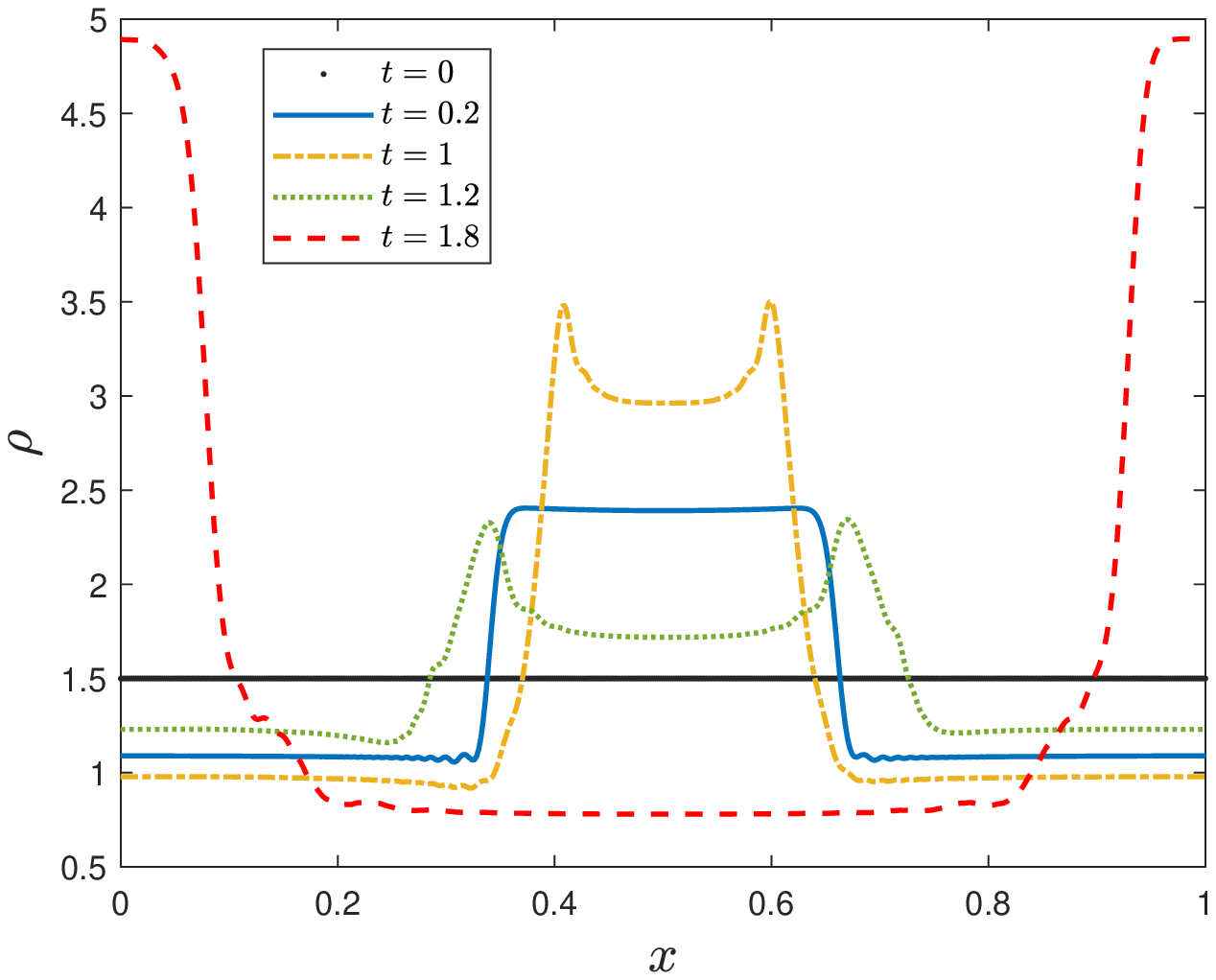}
	}
	\caption{ Velocities $u(x,t)$ and heights $\rho(x,t)$ for the R2CH system in \textbf{Case} (\uppercase\expandafter{\romannumeral1}) computed by scheme \eqref{eq3.1}--\eqref{eq3.3} at five different times with stepsizes $h=0.002$ and $\tau = 0.001$.} \label{fig11}
\end{figure}
\begin{figure}[htbp]
	\centering
	\subfigure[Velocities]{\centering
		\includegraphics[width=0.4\textwidth]{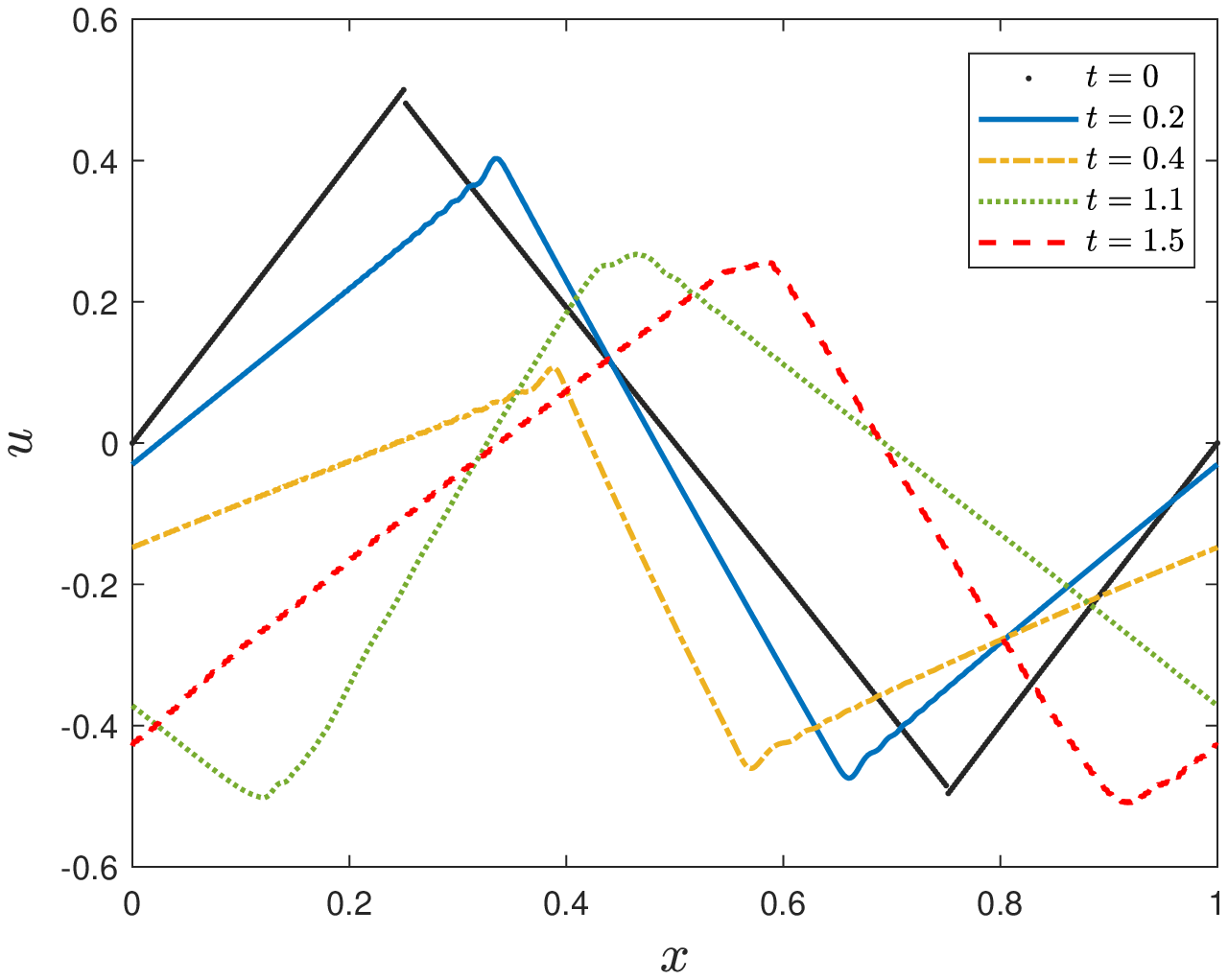}
	}\hspace{-6mm}\subfigure[Heights]{\centering
		\includegraphics[width=0.4\textwidth]{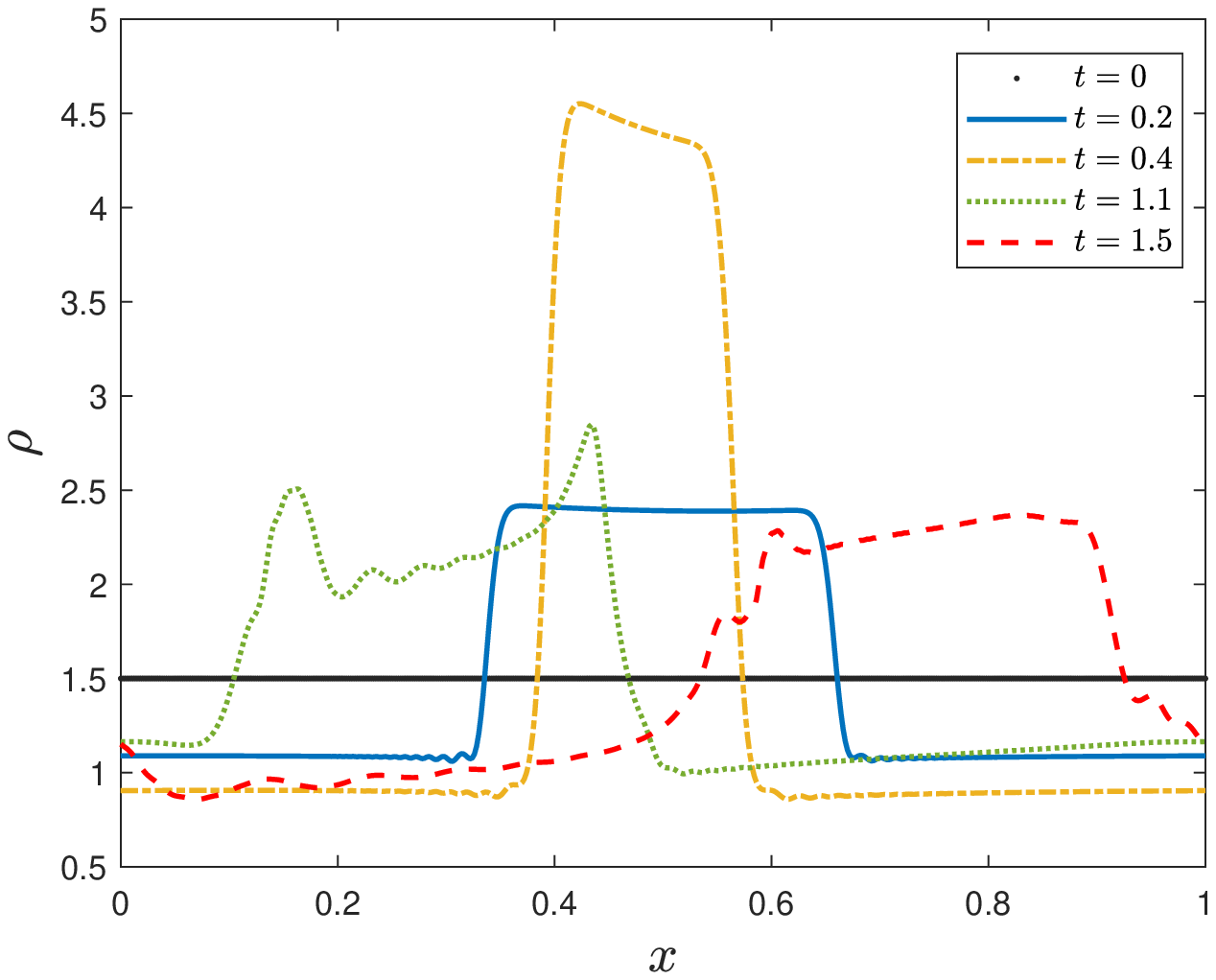}
	}
	\caption{ Velocities $u(x,t)$ and heights $\rho(x,t)$ for the R2CH system in \textbf{Case} (\uppercase\expandafter{\romannumeral2}) computed by scheme \eqref{eq3.1}--\eqref{eq3.3} at five different times with stepsizes $h=0.002$ and $\tau = 0.0005$.} \label{fig12}
\end{figure}
\begin{figure}[htbp]
	\centering
	\subfigure[\textbf{Case} (I)]{\centering
		\includegraphics[width=0.4\textwidth]{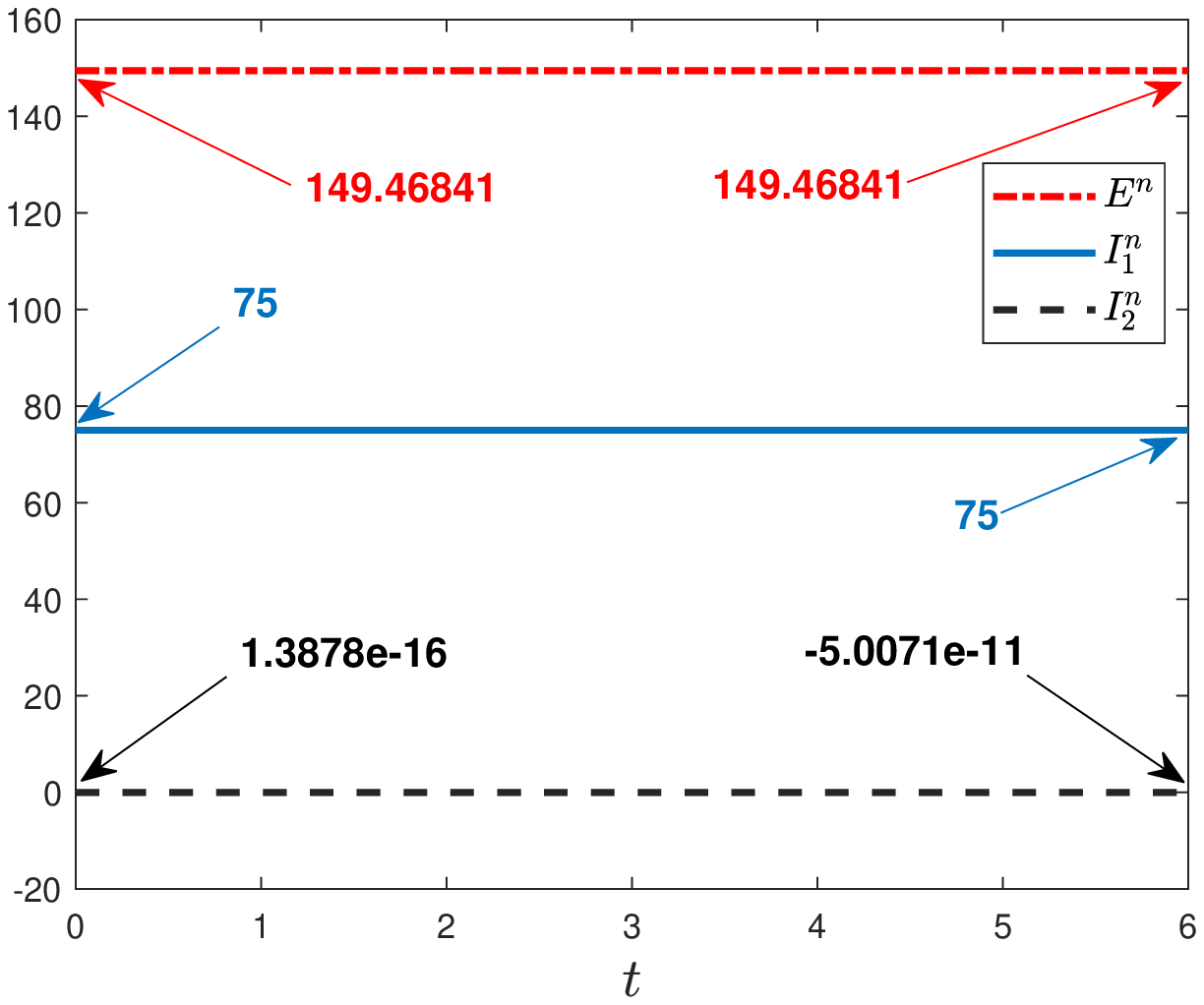}
	}\hspace{-6mm}\subfigure[\textbf{Case} (II)]{\centering
		\includegraphics[width=0.4\textwidth]{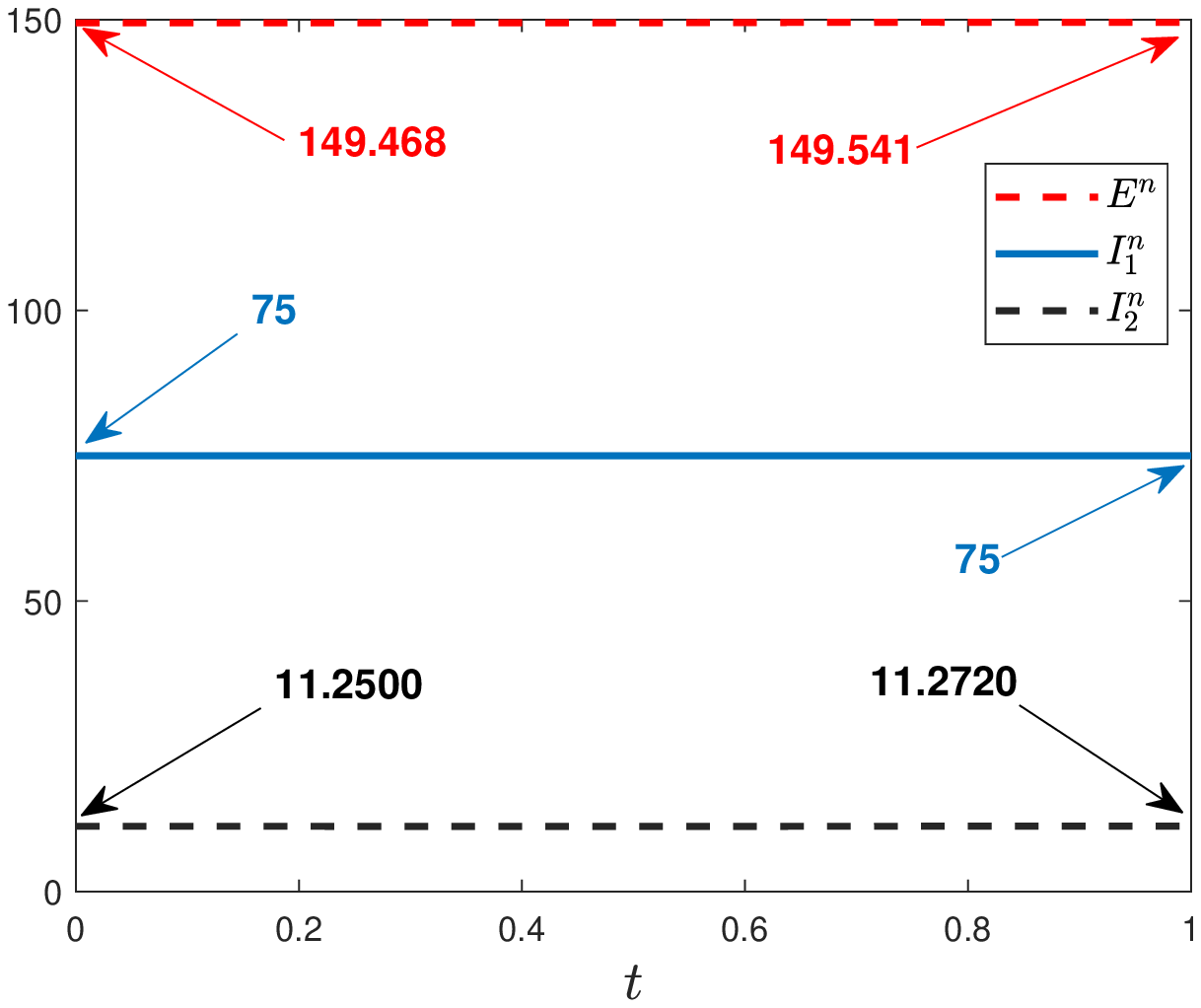}
	}
	\caption{Conserved quantities defined in \eqref{eq2.5(b)}--\eqref{eq2.7(b)} for the R2CH system in \textbf{Case} (\uppercase\expandafter{\romannumeral1}) and \textbf{Case} (\uppercase\expandafter{\romannumeral2})  with stepsizes $h=0.02$ and $\tau = 0.0005$.}  \label{fig13}
\end{figure}
\begin{figure}[htbp]
	\centering
	\subfigure[Velocity variable, view(-20,60)]{\centering
		\includegraphics[width=0.4\textwidth]{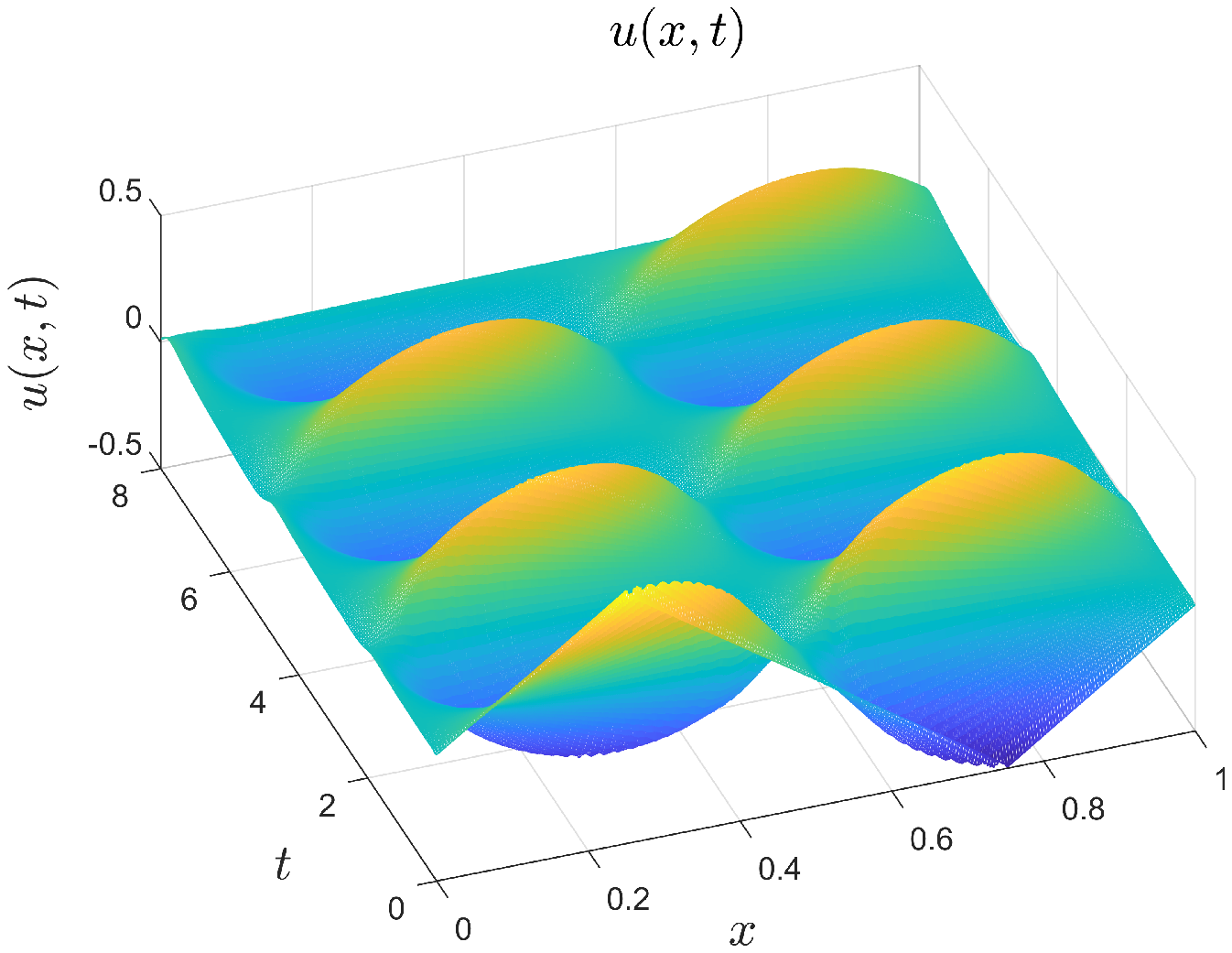}
	}\subfigure[Height variable, view(-20,60)]{\centering
		\includegraphics[width=0.4\textwidth]{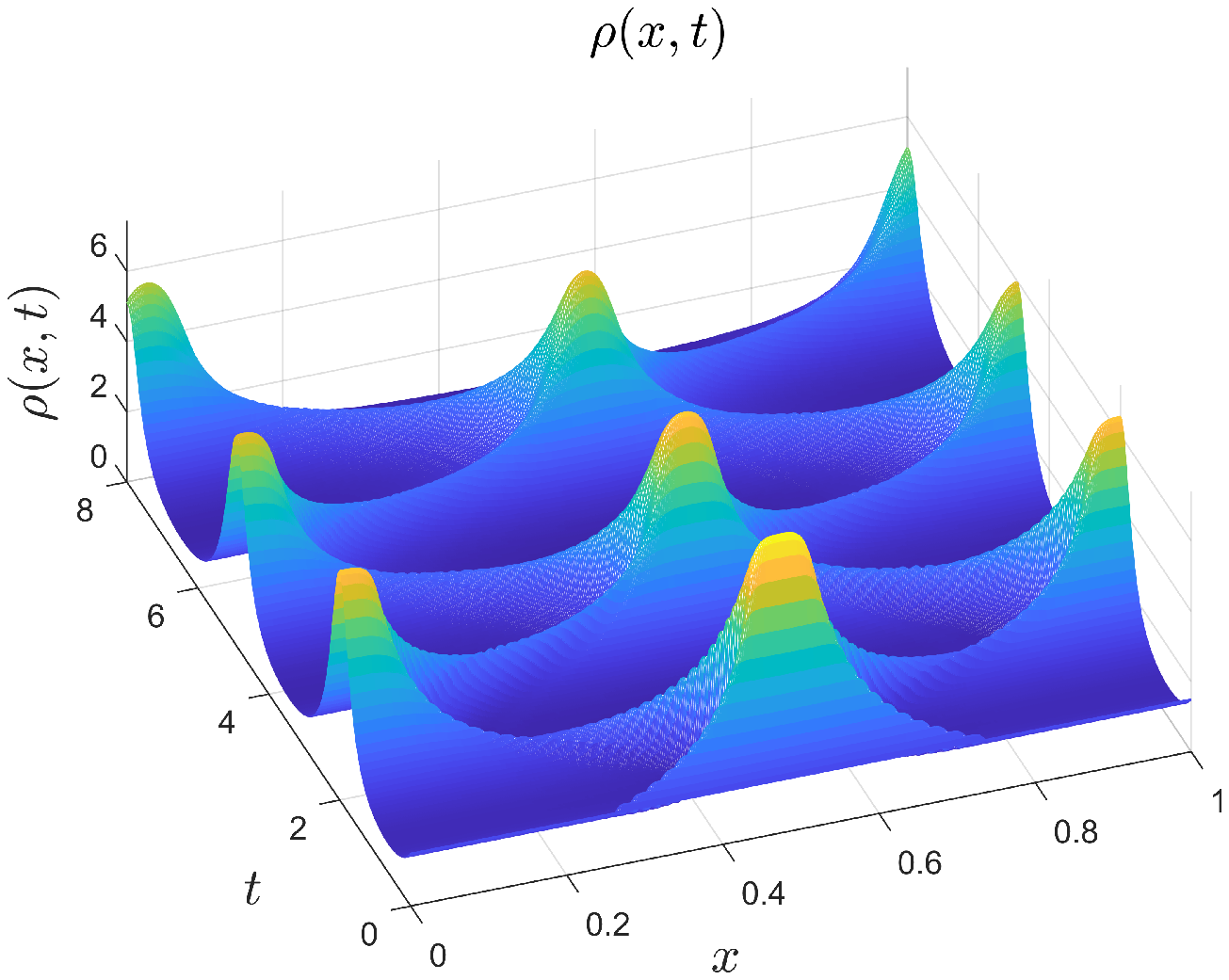}
	}
	\caption{The predicted peakon solutions for the R2CH system in \textbf{Case} (\uppercase\expandafter{\romannumeral1})
		show the evolution of the velocity $u(x,t)$ and the height $\rho(x,t)$ with stepsizes $h=0.002$ and $\tau=0.001$. }     \label{fig14}
\end{figure}
\begin{figure}[htbp]
	\centering
	\subfigure[Velocity variable, view(-20,60)]{\centering
		\includegraphics[width=0.4\textwidth]{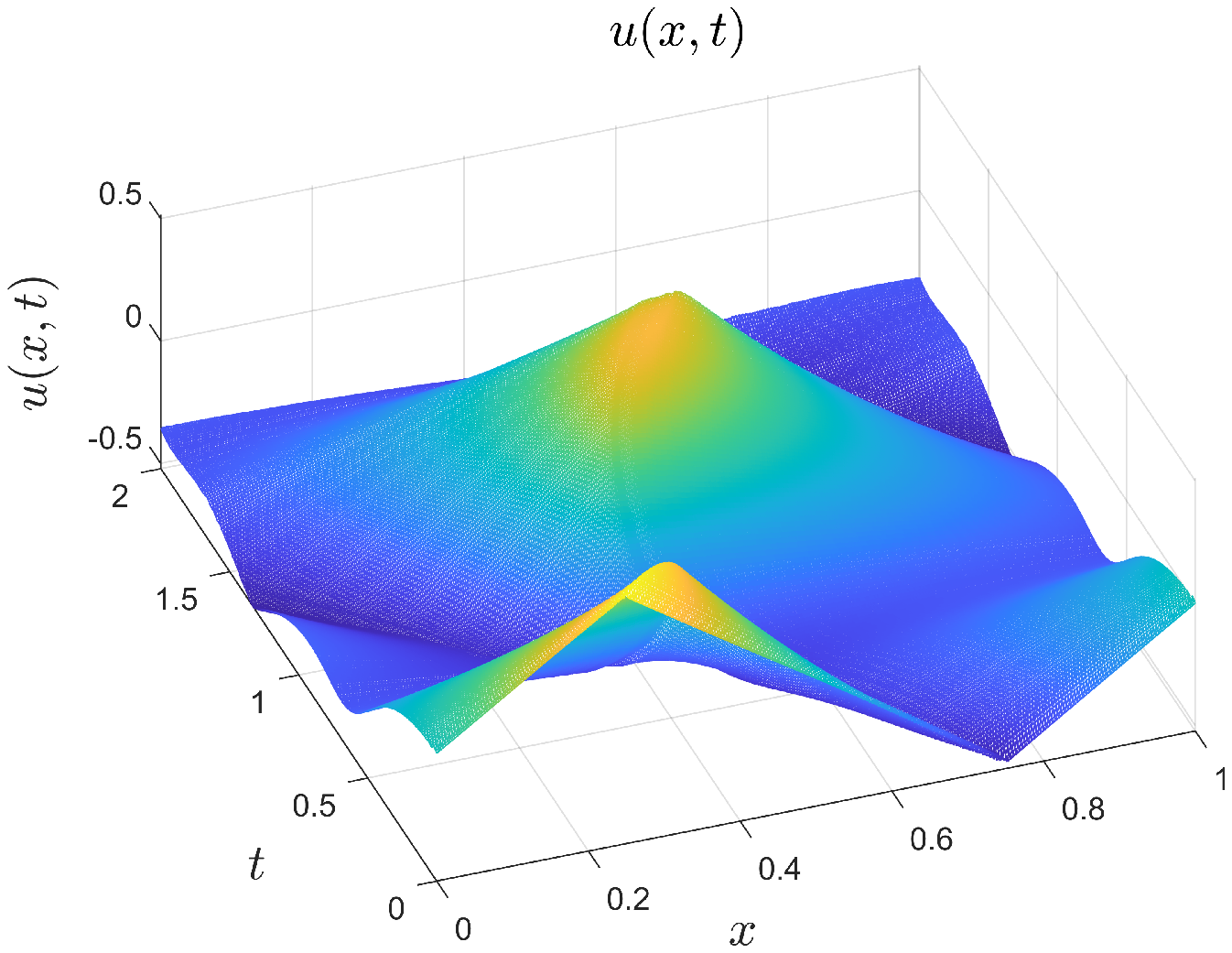}
	}\subfigure[Height variable, view(-20,60)]{\centering
		\includegraphics[width=0.4\textwidth]{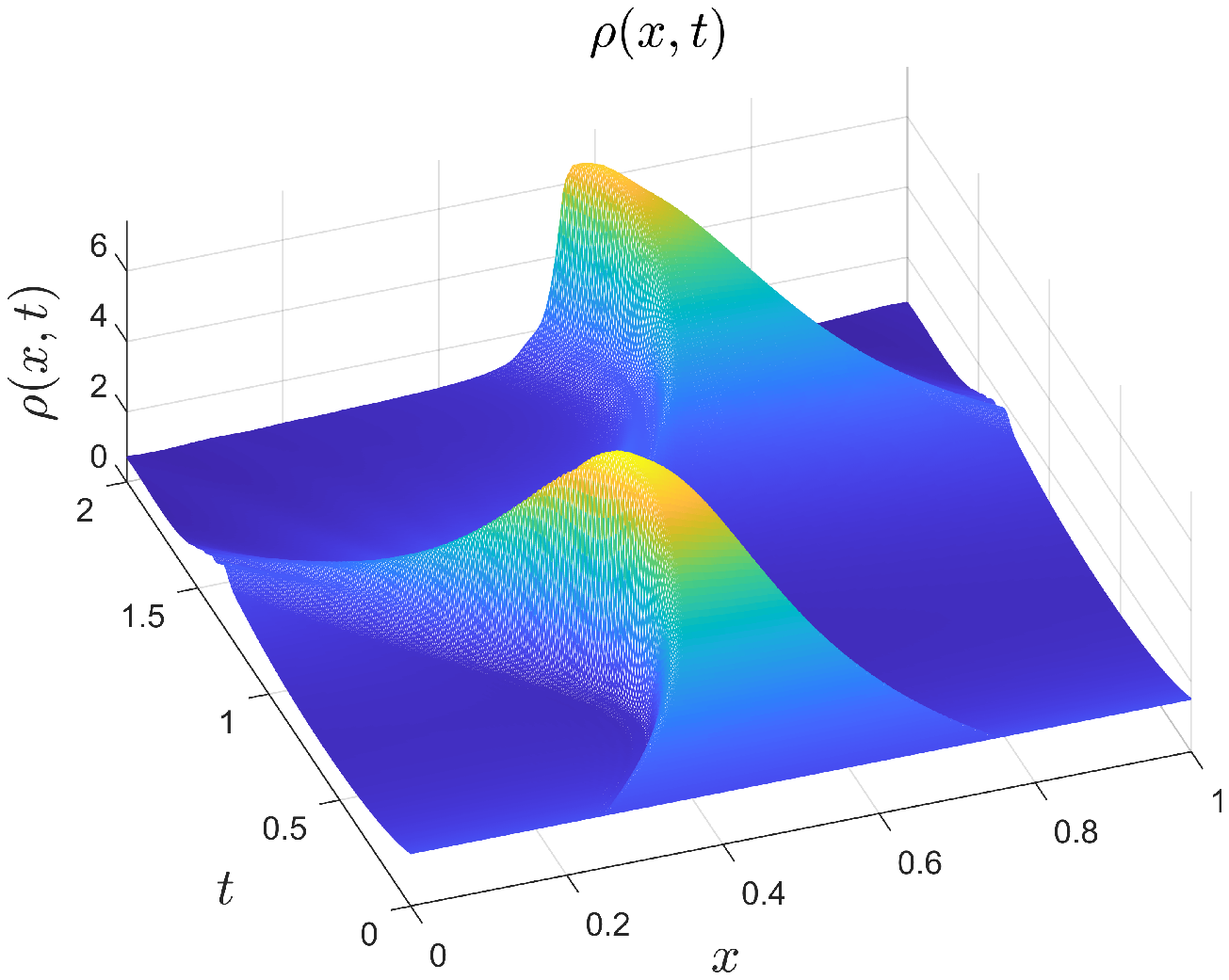}
	}\caption{The predicted peakon solutions for the R2CH system in \textbf{Case} (\uppercase\expandafter{\romannumeral2})
		show the evolution of the velocity $u(x,t)$ and the height $\rho(x,t)$ with stepsizes $h=0.002$ and $\tau=0.001$. }   \label{fig15}
\end{figure}

\begin{example}
	[\textbf{Peakon anti-peakon \label{Exam5.5} interaction--II}\cite{CKL2020}, \cite{LP2016}]
	Here we study the case of a peakon anti-peakon interaction over a long time.
	For this purpose, we consider the following initial data
	\begin{align*}
		u(x,0) = p_1 e^{-|x-x_1|} + p_2 e^{-|x-x_2|}, \quad  \rho (x,0) = 0.5, % \label{eq4.4}
	\end{align*}
	where $x_1 = -5$ and $x_2 = 5$ respectively represent the position of the peak and the trough,
	$p_1 = 1$, $p_2 = -1$ and the computation domain is set to be $[-20,20]$.
\end{example}
We depict the evolution of the solutions for the R2CH system \eqref{eq1.3}--\eqref{eq1.4}, where the parameters are selected as \textbf{Cases} (\uppercase\expandafter{\romannumeral1}) and (\uppercase\expandafter{\romannumeral3}) by taking $h=0.05$ and $\tau=0.0005$ for calculation. Figures \ref{fig16}--\ref{fig19} portray the evolution behaviors of velocity $u(x,t)$ and height $\rho(x,t)$ under these two cases. It easily observes that for sufficiently long periods of time, the peak solutions will have an elastic collision, so that we obtain a dissipative solution for the R2CH system \eqref{eq1.3}--\eqref{eq1.4}. By comparing Figures \ref{fig16}--\ref{fig17} with Figures \ref{fig18}--\ref{fig19}, we find that \textbf{Case} (\uppercase\expandafter{\romannumeral1}) still maintains symmetric, while the symmetry in \textbf{Case} (\uppercase\expandafter{\romannumeral3}) is obviously broken. Similar to Example \ref{Exam5.4}, we assert that the total momentum remains zero in \textbf{Case} (\uppercase\expandafter{\romannumeral1}) because the two peak solutions have the same magnitude but opposite signs. The momentum in \textbf{Case} (\uppercase\expandafter{\romannumeral3}) is nonzero because the solution in \textbf{Case} (\uppercase\expandafter{\romannumeral3}) loses its symmetry. We verify this through the calculation, which is not listed here for brevity. It is worth mentioning that, since $\rho(x,0) > 0$, it can be proved that $\rho(x,t)$ remains strictly positive and that the solution retains the regularity of the initial data, see e.g., \cite{GHR2012}. Particularly, $\rho(x,t)$ remains bounded. We observe that the numerical solution for the height $\rho(x,t)$ still keep positive. Ultimately, the evolution of the velocity $u(x,t)$ and the height $\rho(x,t)$ for these two cases in a longer time ($t=35$) is depicted in Figures \ref{fig20}--\ref{fig21}. We clearly see the symmetry in \textbf{Case} (\uppercase\expandafter{\romannumeral1}) and asymmetry in \textbf{Case} (\uppercase\expandafter{\romannumeral3}). These phenomena for the asymmetrical case are displayed firstly in current paper.

\begin{figure}[htbp]
	\subfigure[$t=1$]{\centering
		\includegraphics[width=0.35\textwidth]{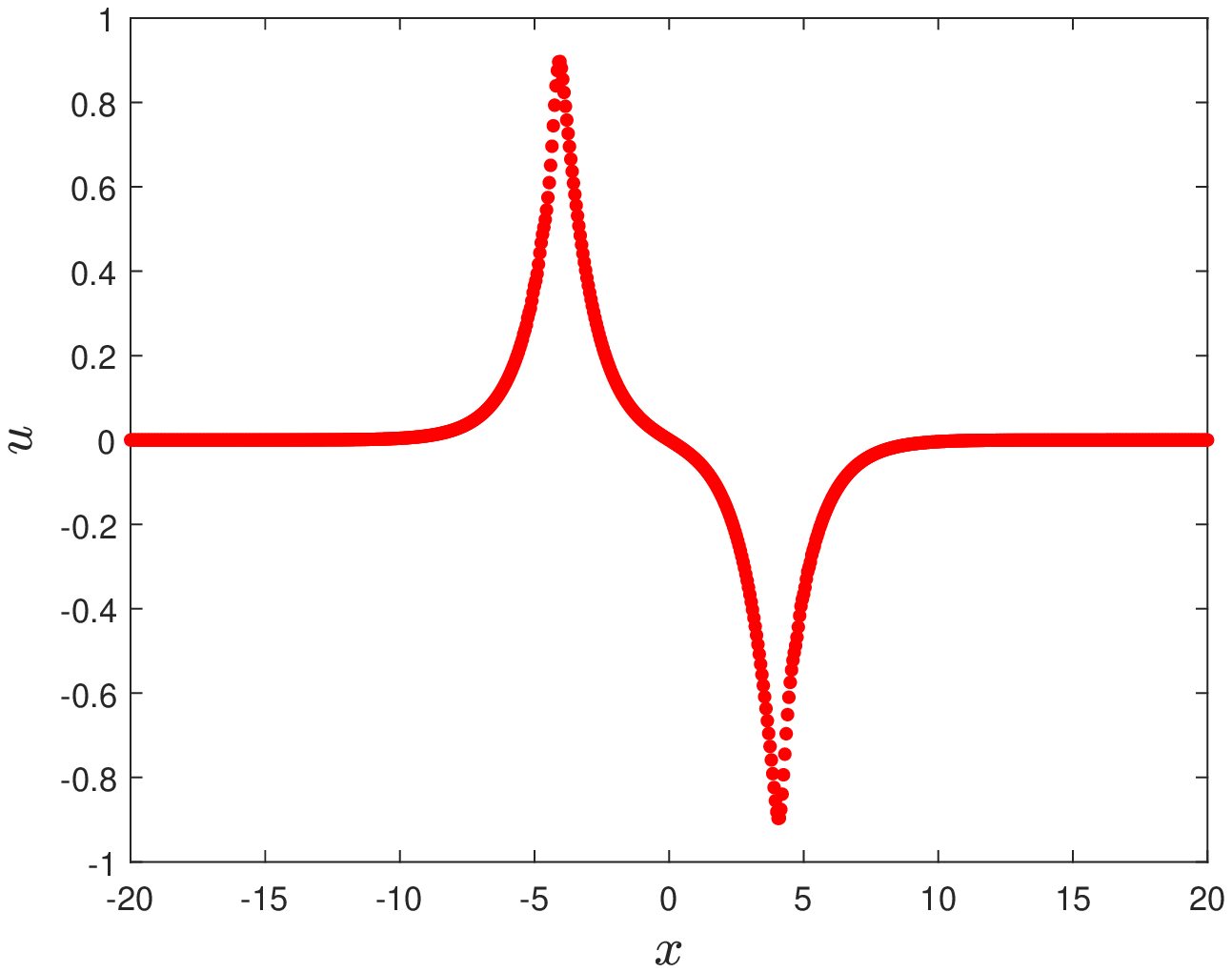}
	}\hspace{-5.3mm}\subfigure[$t=3$]{\centering
		\includegraphics[width=0.35\textwidth]{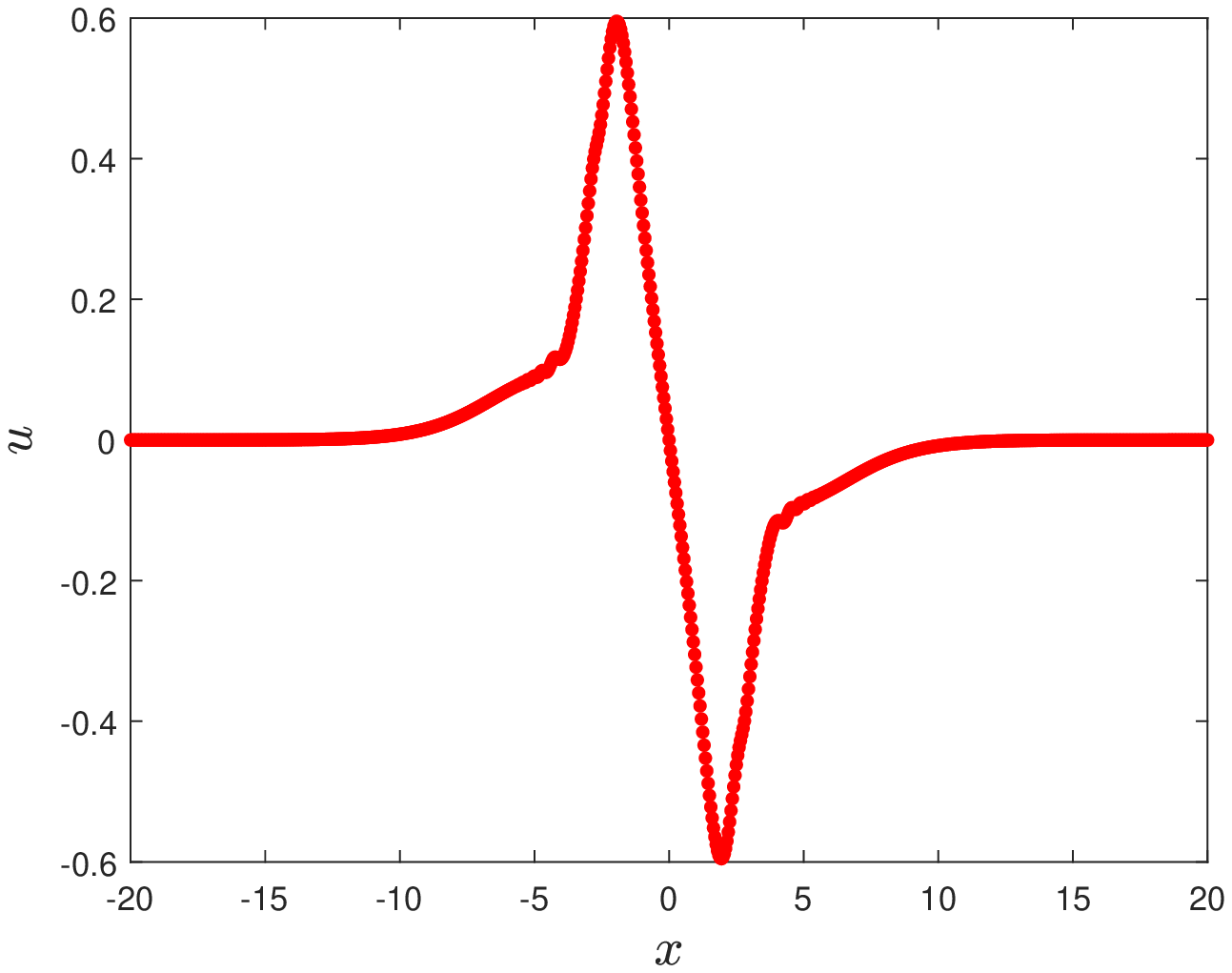}
	}\hspace{-5.3mm}\subfigure[$t=5$]{\centering
		\includegraphics[width=0.35\textwidth]{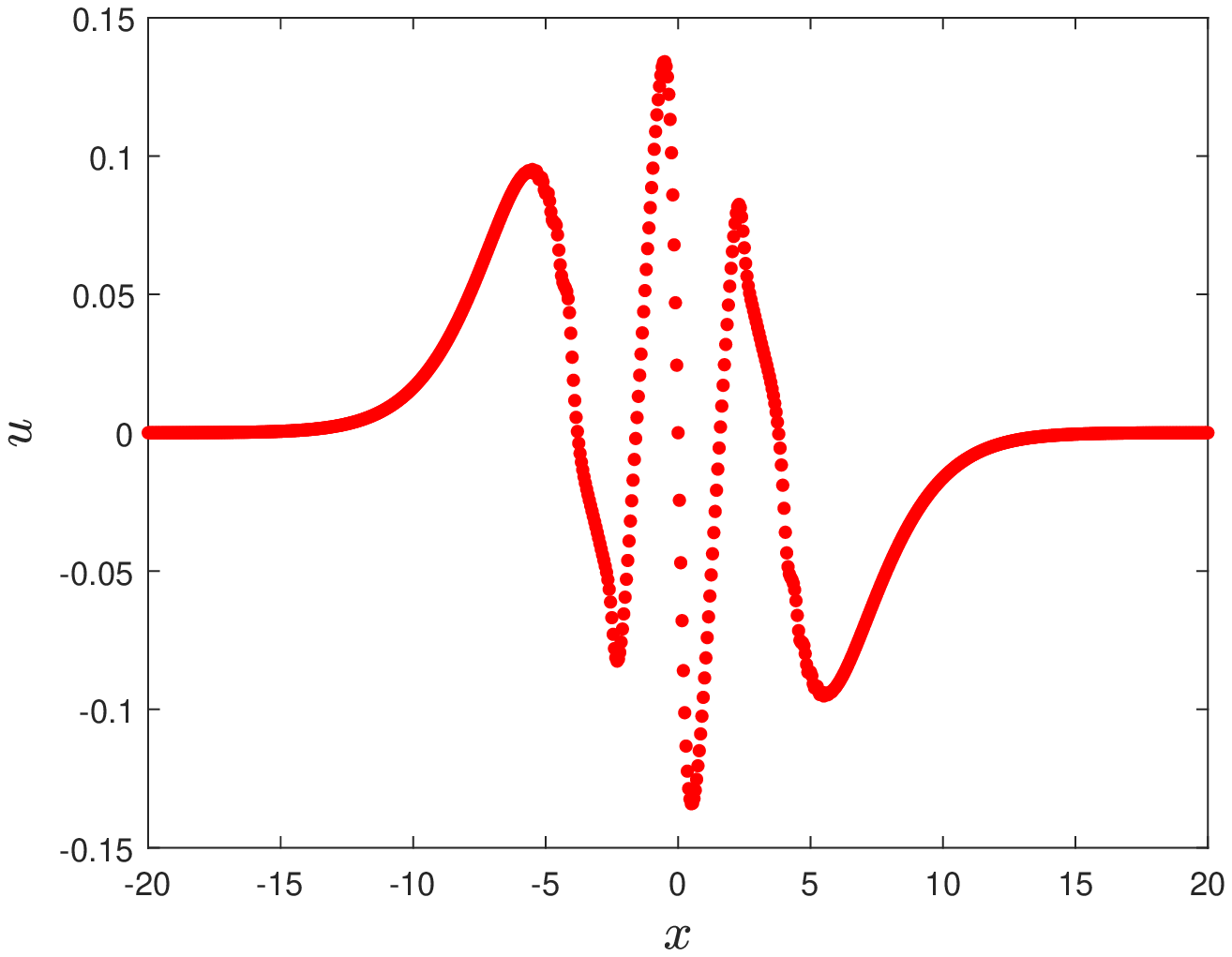}
	}\vspace{-2mm}
	\subfigure[$t=6$]{\centering
		\includegraphics[width=0.35\textwidth]{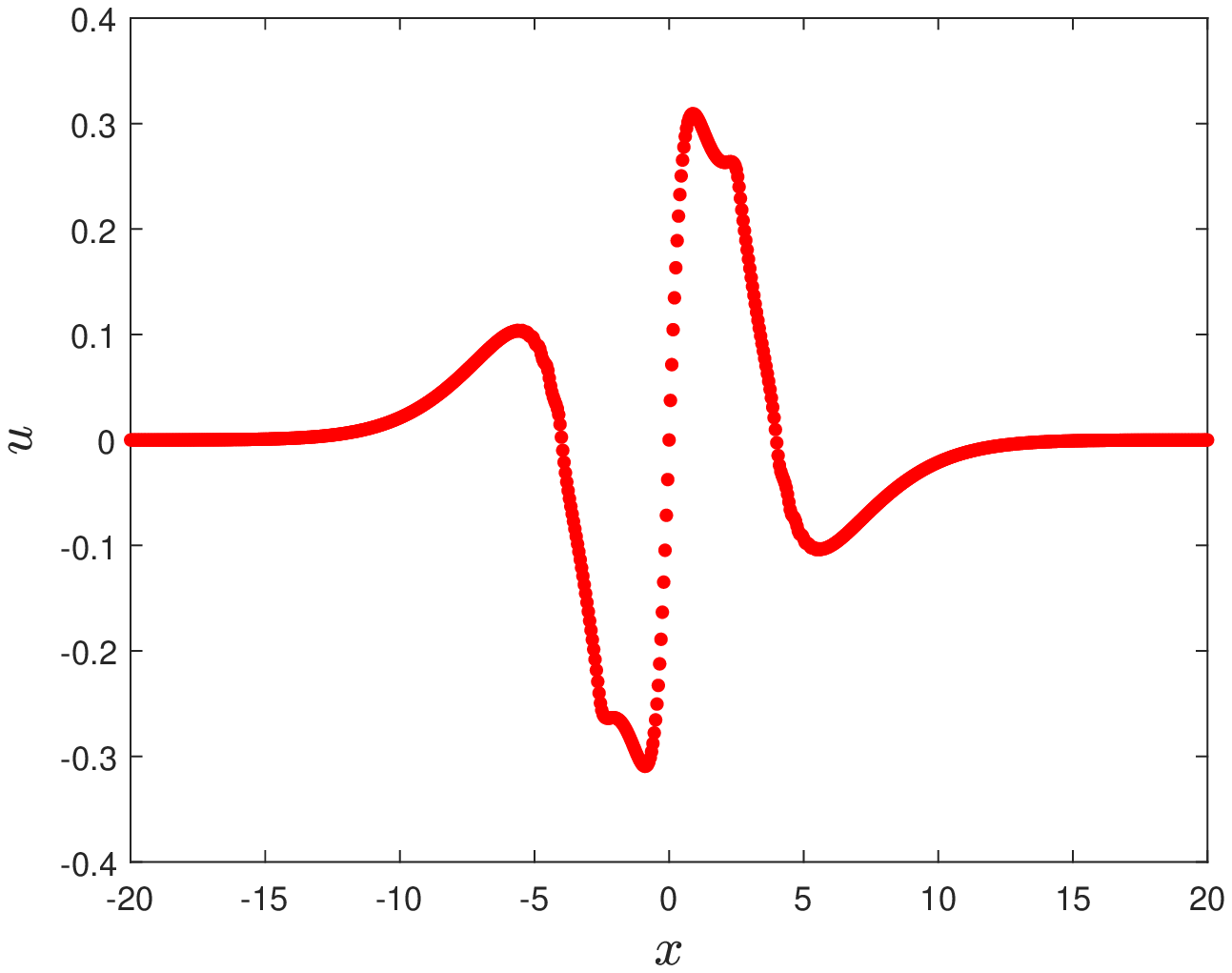}
	}\hspace{-5.3mm}\subfigure[$t=8$]{\centering
		\includegraphics[width=0.35\textwidth]{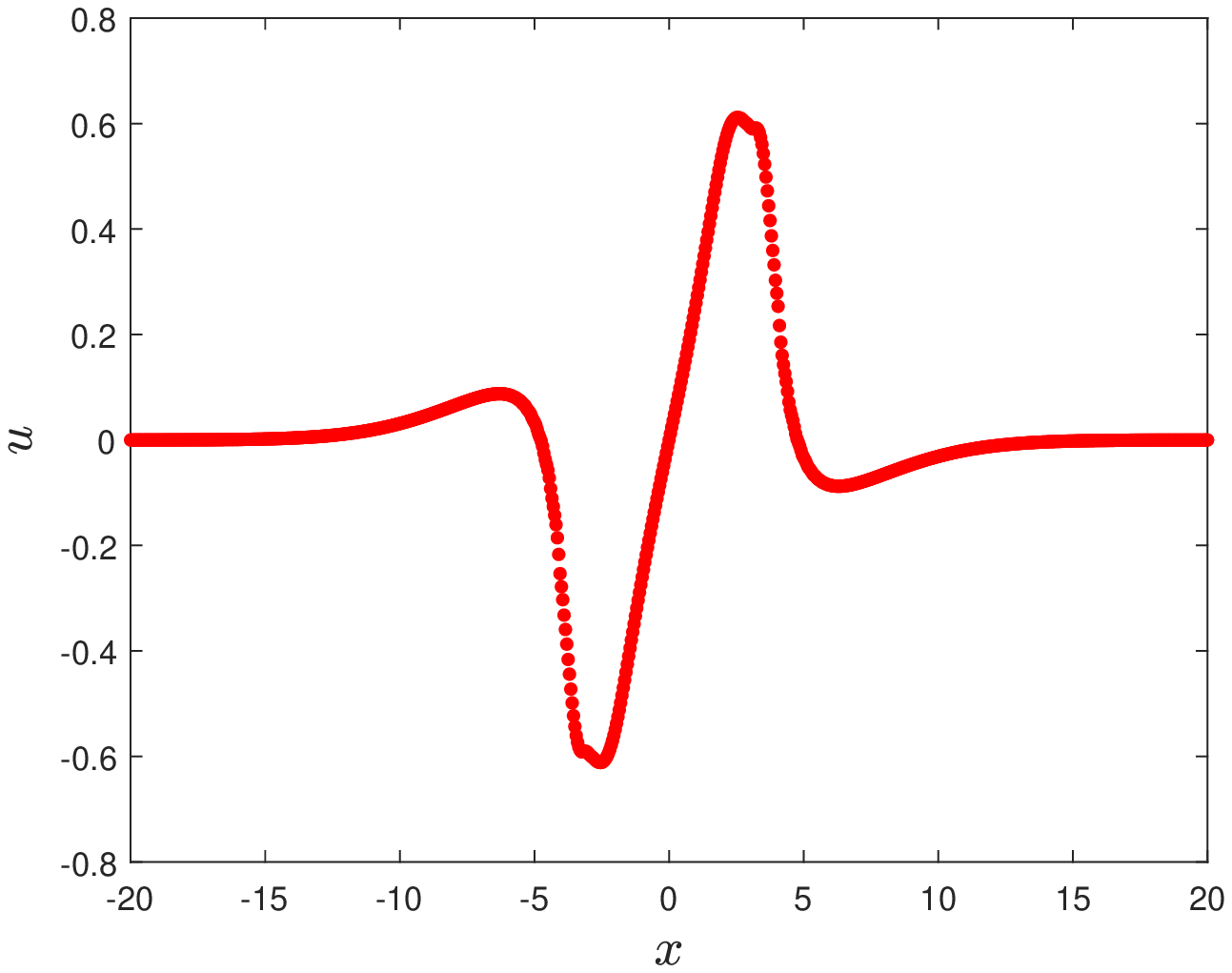}
	}\hspace{-5.3mm}\subfigure[$t=10$]{\centering
		\includegraphics[width=0.35\textwidth]{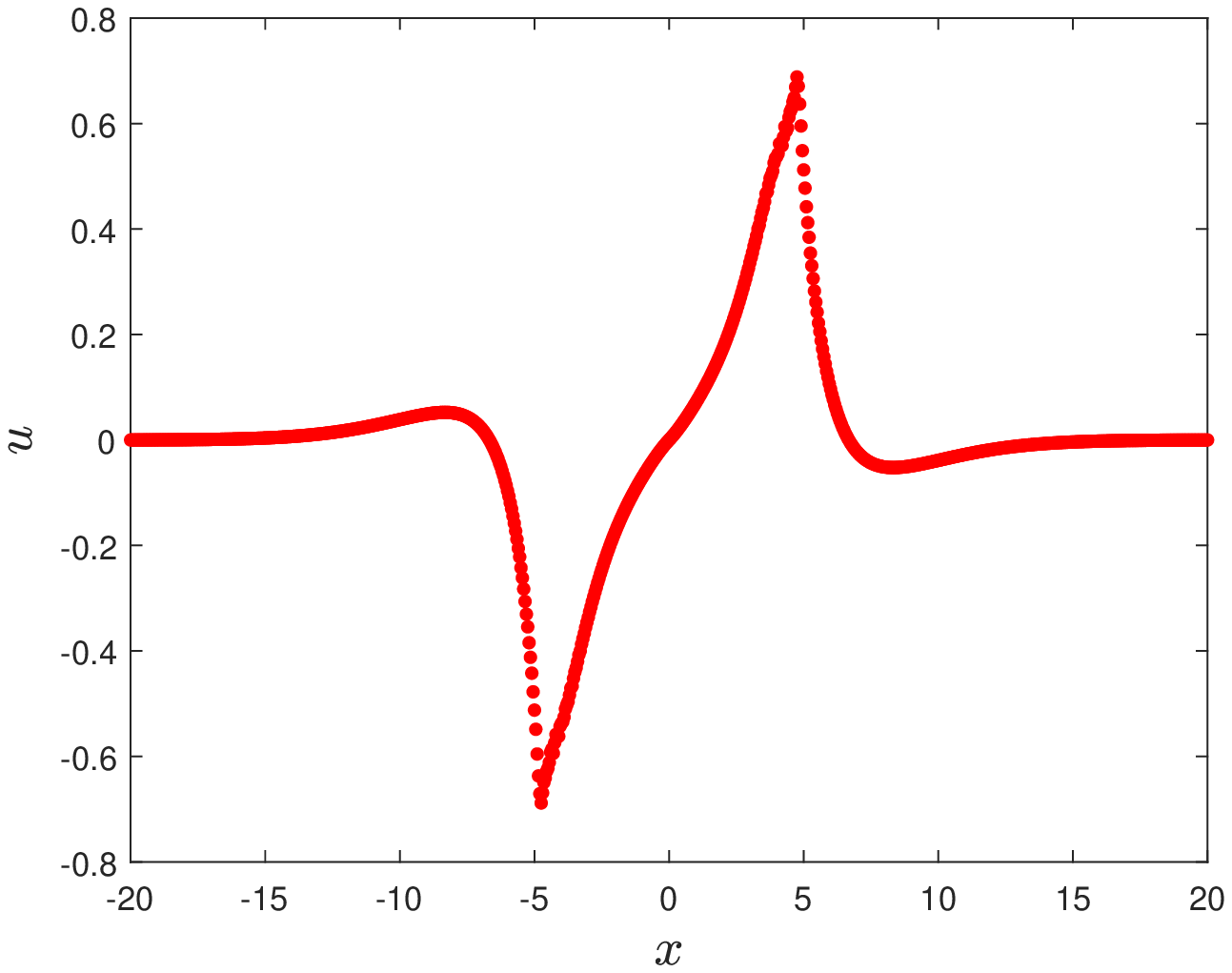}
	}\vspace{-2mm}
	\caption{The velocities $u(x,t)$ for the R2CH system in \textbf{Case} (\uppercase\expandafter{\romannumeral1}) at different times with stepsizes $h=0.05$ and $\tau = 0.0005$.} \label{fig16}
\end{figure}
\begin{figure}[htbp]
\vspace{-2mm}
	\subfigure[$t=1$]{\centering
		\includegraphics[width=0.35\textwidth]{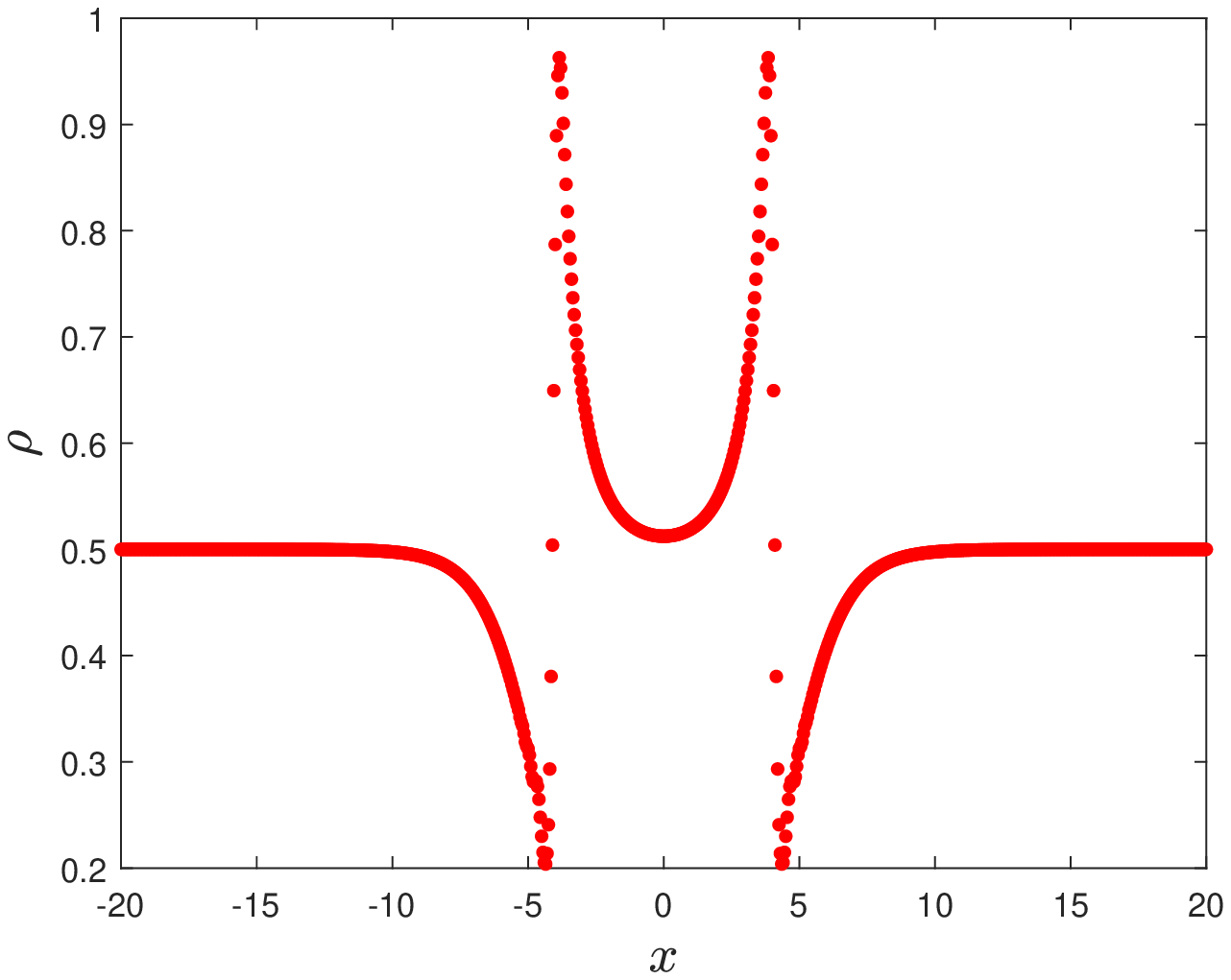}
	}\hspace{-5.3mm}\subfigure[$t=3$]{\centering
		\includegraphics[width=0.35\textwidth]{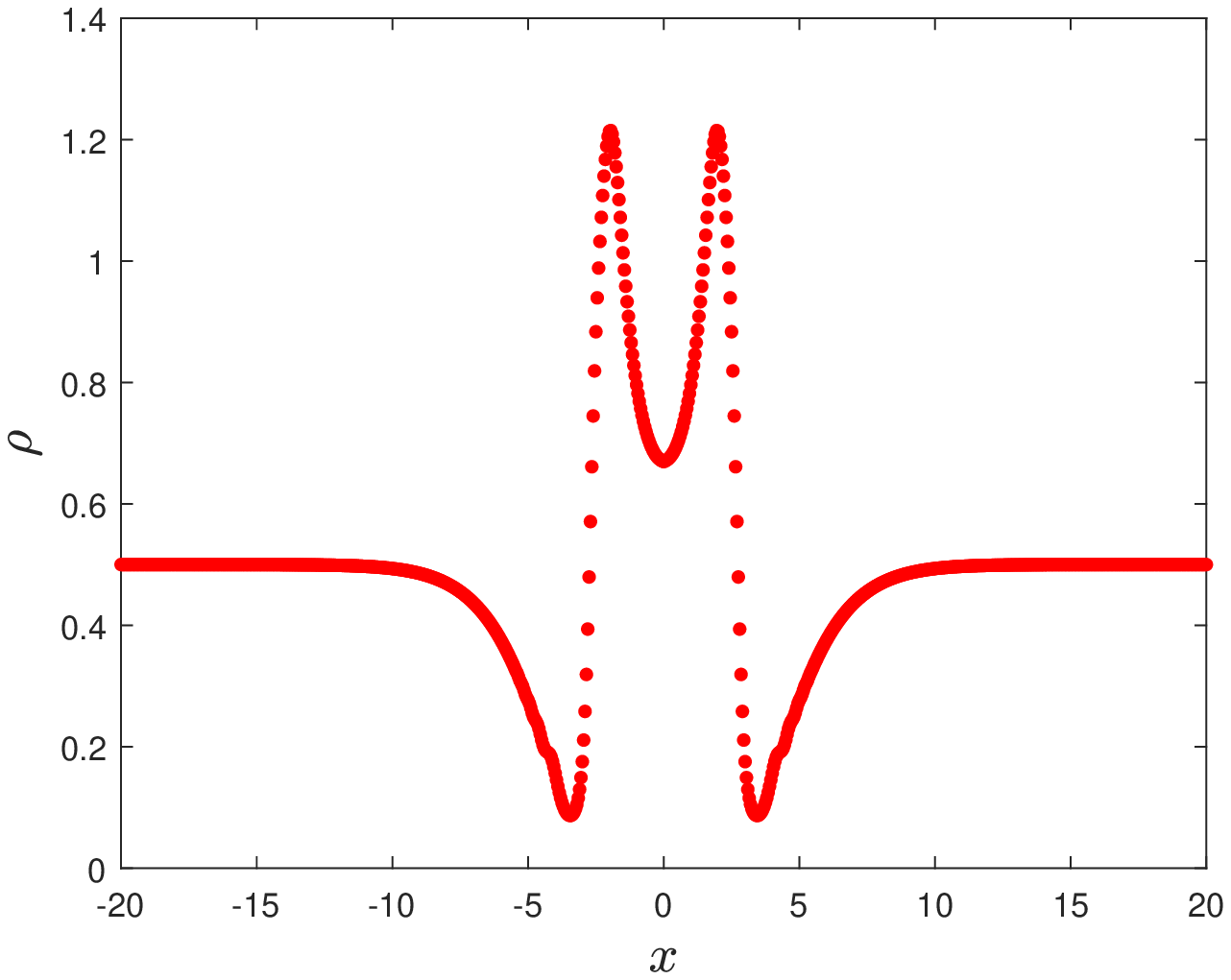}
	}\hspace{-5.3mm}\subfigure[$t=5$]{\centering
		\includegraphics[width=0.35\textwidth]{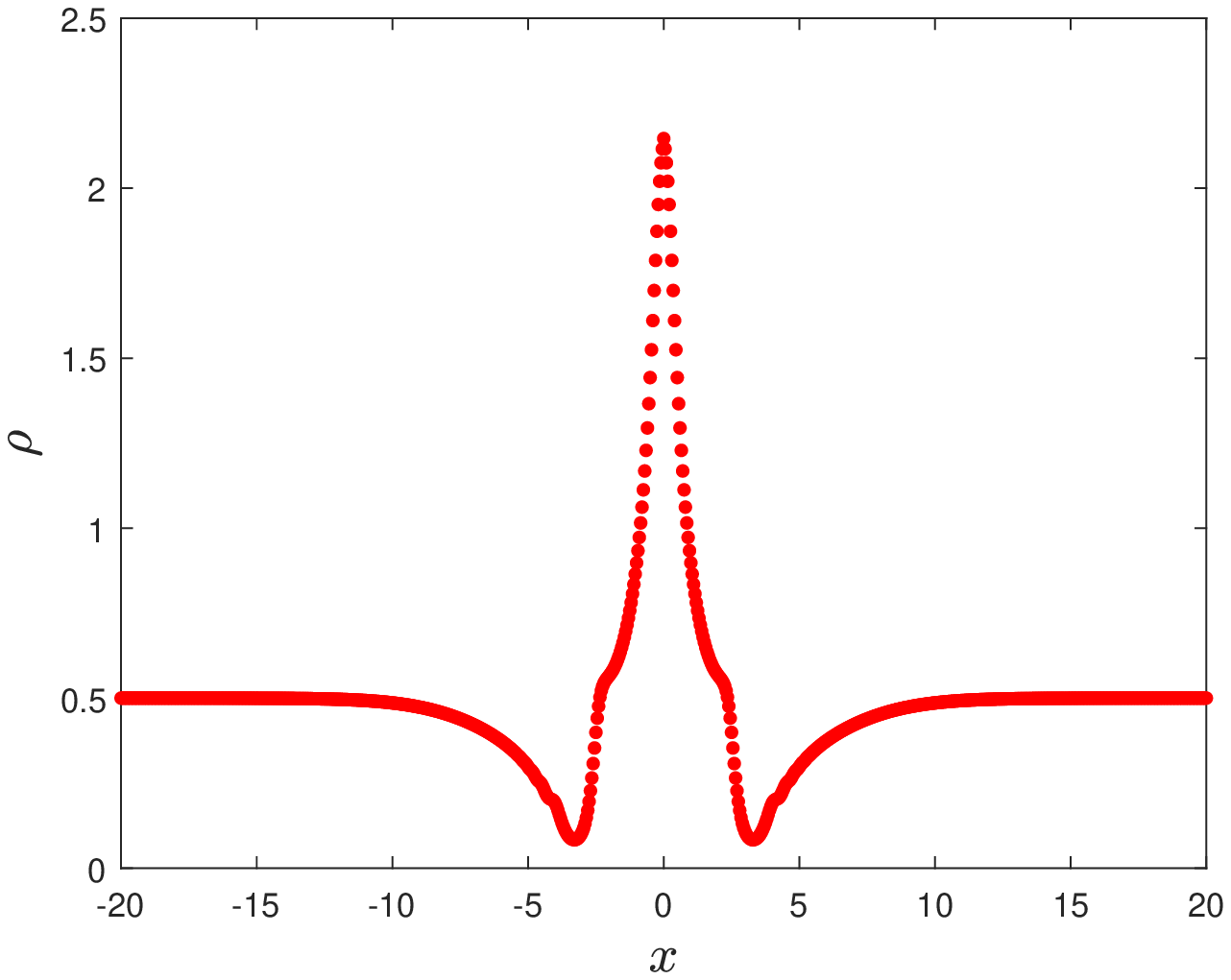}
	} \vspace{-3mm}
	\subfigure[$t=6$]{\centering
		\includegraphics[width=0.35\textwidth]{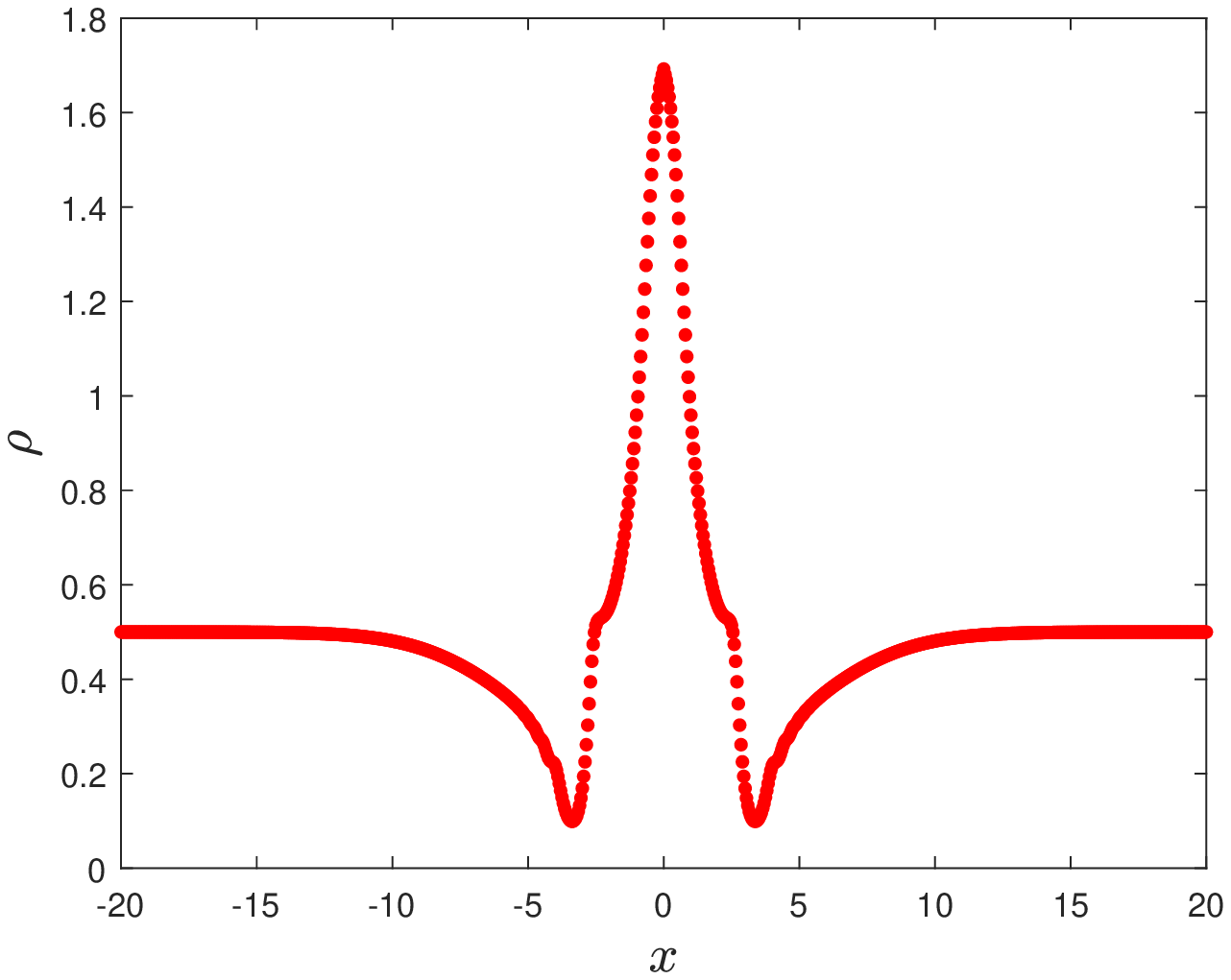}
	}\hspace{-5.3mm}\subfigure[$t=8$]{\centering
		\includegraphics[width=0.35\textwidth]{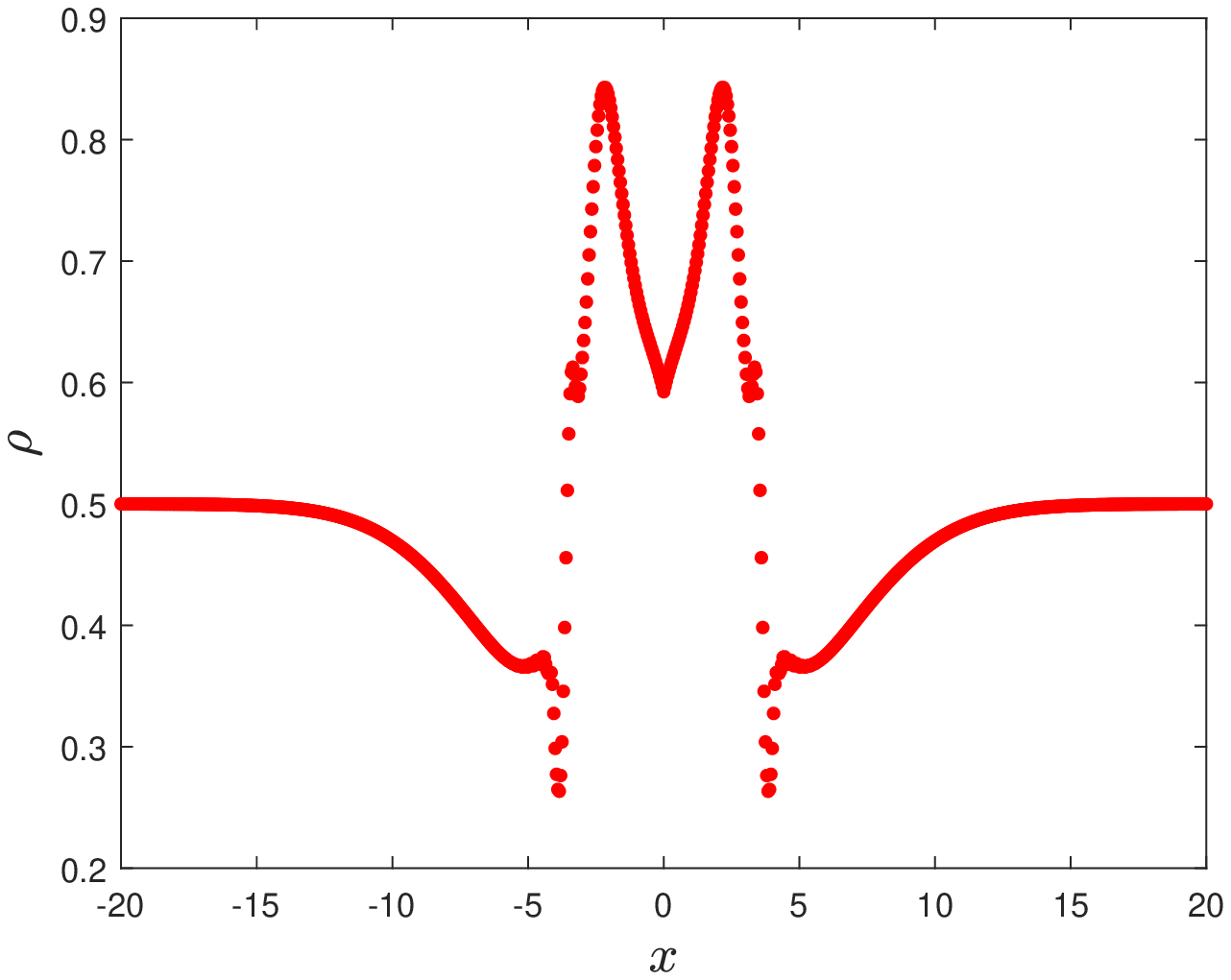}
	}\hspace{-5.3mm}\subfigure[$t=10$]{\centering
		\includegraphics[width=0.35\textwidth]{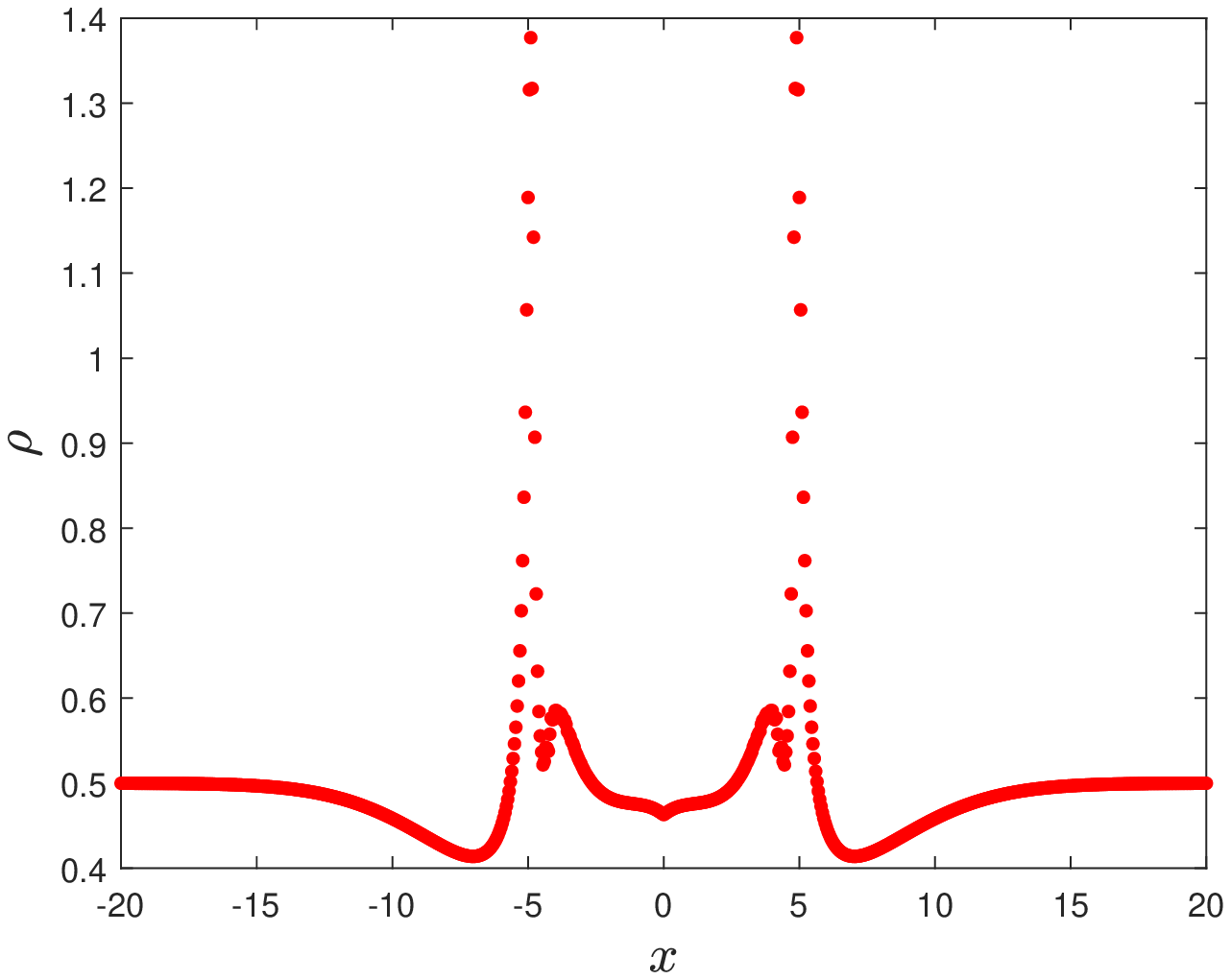}
	} \vspace{-2mm}
	\caption{The heights $\rho(x,t)$ for the R2CH system in \textbf{Case} (\uppercase\expandafter{\romannumeral1}) at different times with stepsizes $h=0.05$ and $\tau = 0.0005$.} \label{fig17}
\end{figure}
\begin{figure}[htbp]
	\subfigure[$t=1$]{\centering
		\includegraphics[width=0.35\textwidth]{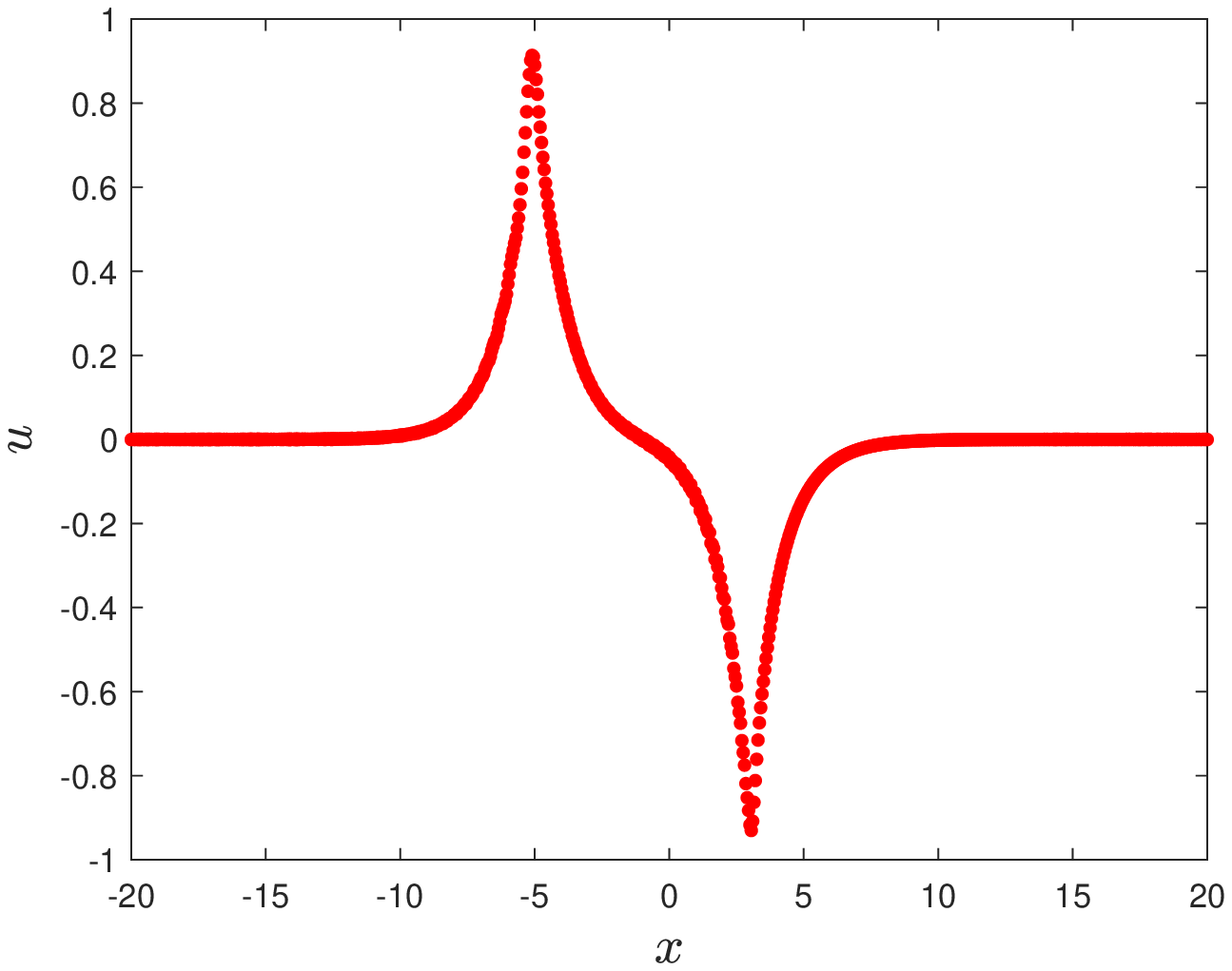}
	}\hspace{-5.3mm}\subfigure[$t=3$]{\centering
		\includegraphics[width=0.35\textwidth]{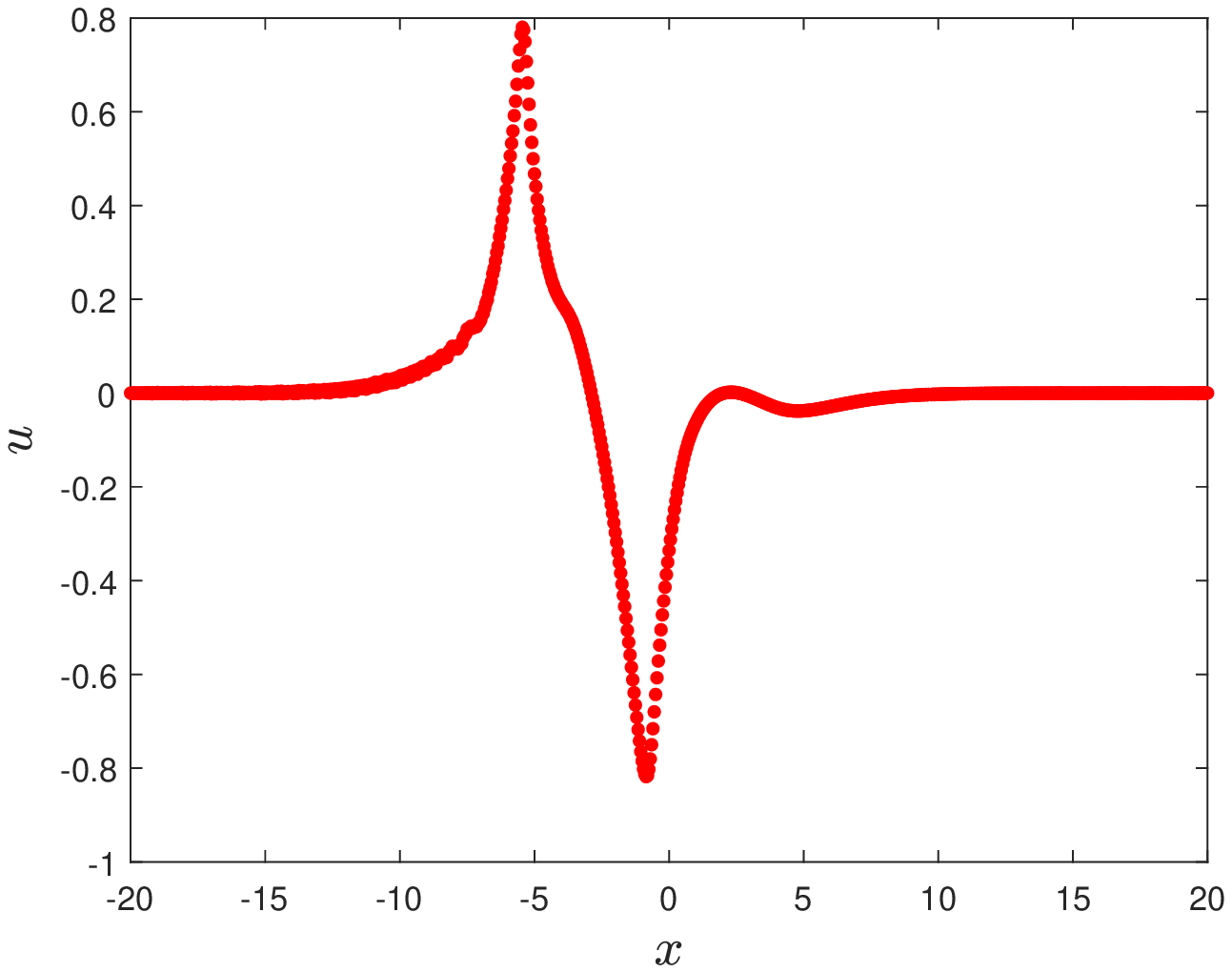}
	}\hspace{-5.3mm}\subfigure[$t=5$]{\centering
		\includegraphics[width=0.35\textwidth]{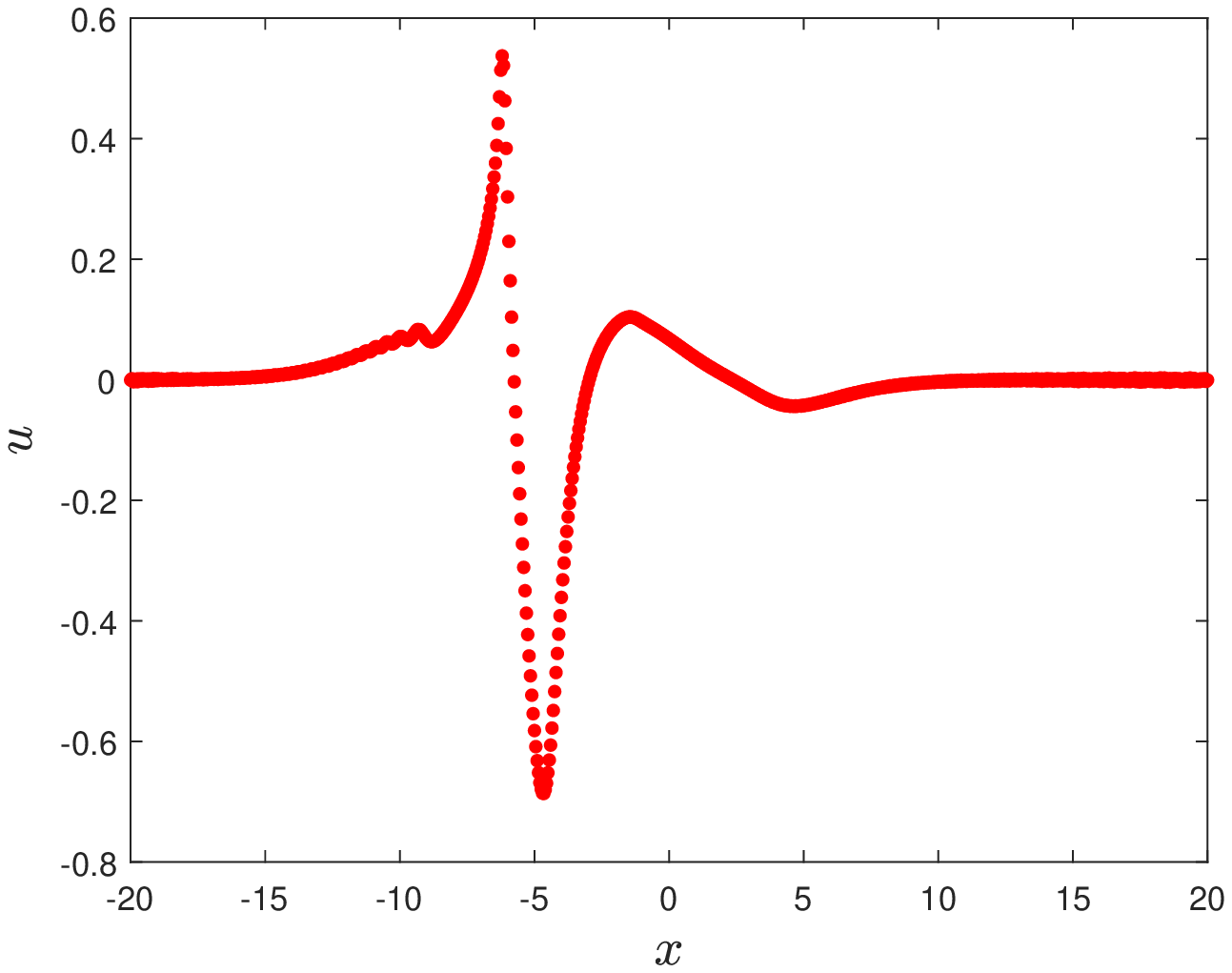}
	}  \vspace{-3mm}
	\subfigure[$t=6$]{\centering
		\includegraphics[width=0.35\textwidth]{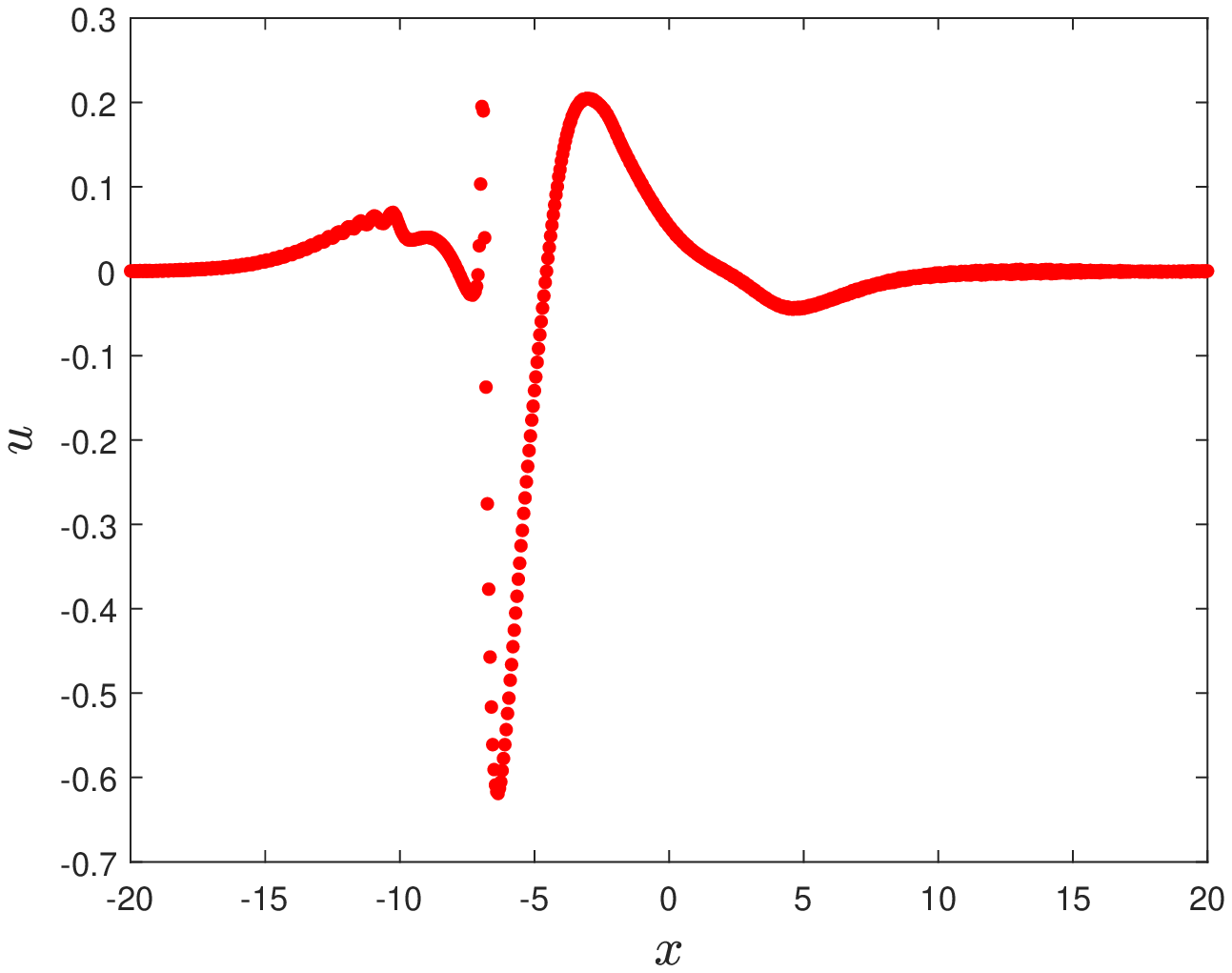}
	}\hspace{-5.3mm}\subfigure[$t=8$]{\centering
		\includegraphics[width=0.35\textwidth]{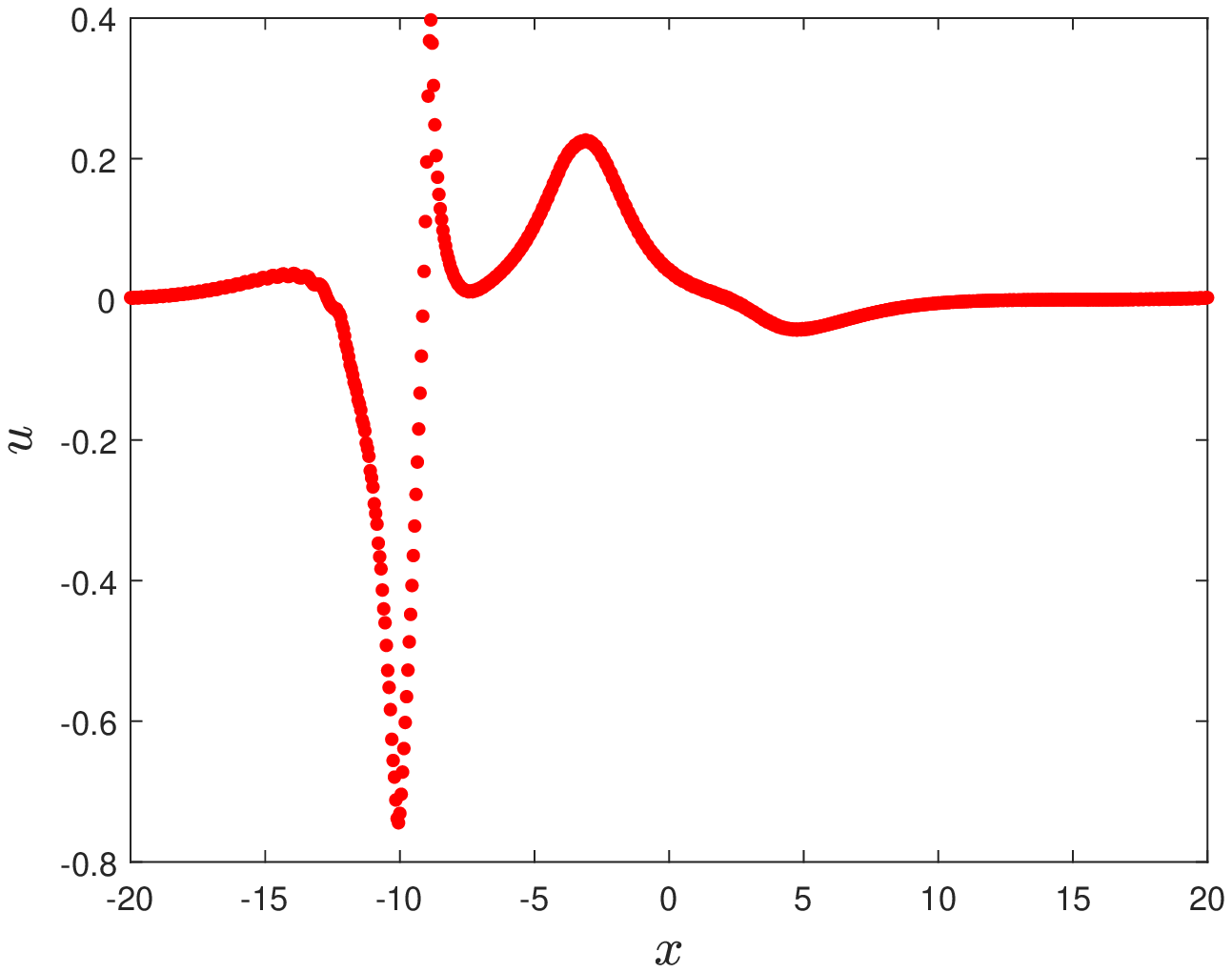}
	}\hspace{-5.3mm}\subfigure[$t=10$]{\centering
		\includegraphics[width=0.35\textwidth]{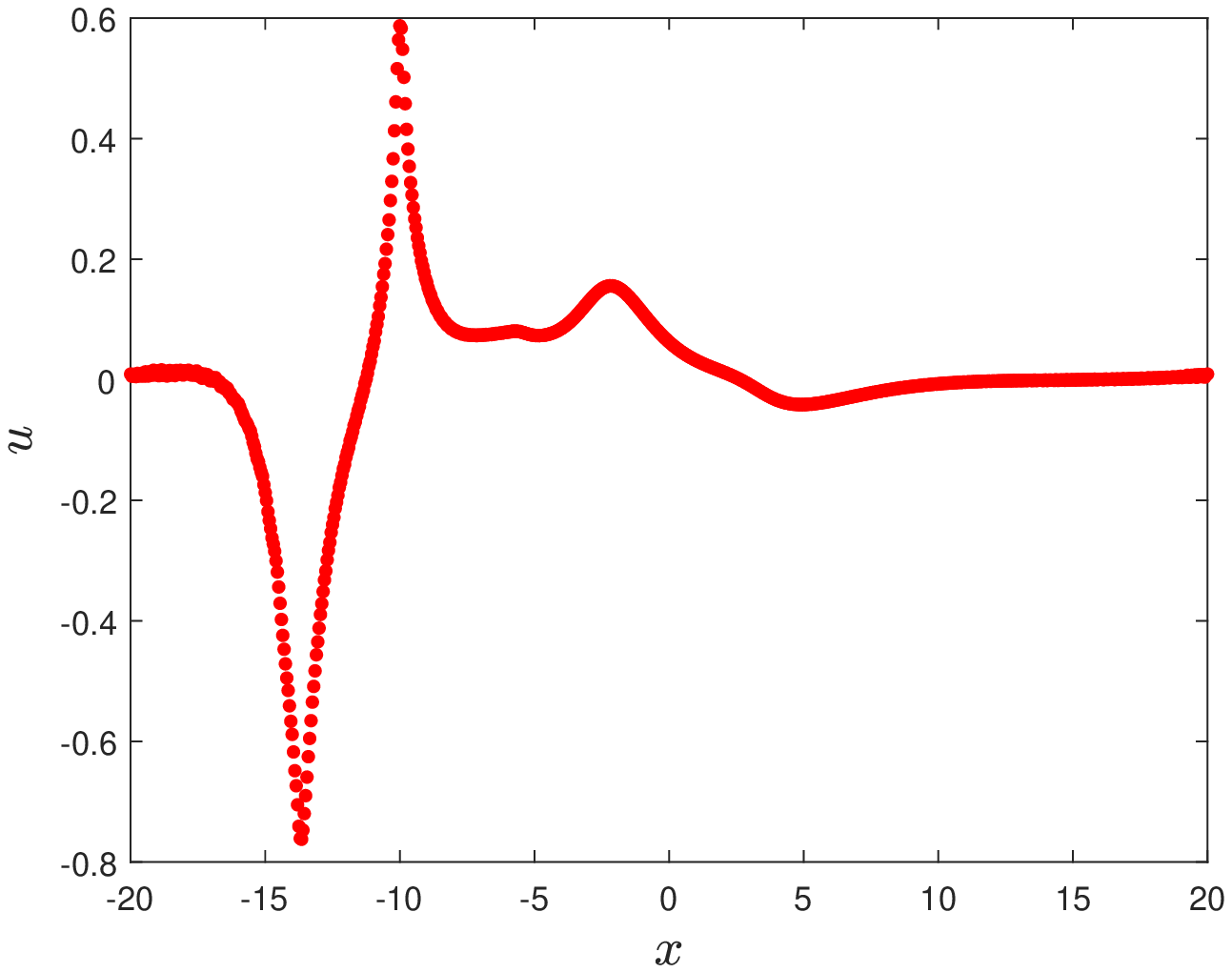}
	} \vspace{-2mm}
	\caption{The velocities $u(x,t)$ for the R2CH system in \textbf{Case} (\uppercase\expandafter{\romannumeral3}) at different times with stepsizes $h=0.05$ and $\tau = 0.0005$.} \label{fig18}
\vspace{-2mm}
\end{figure}

\begin{figure}[htbp]
	\subfigure[$t=1$]{\centering
		\includegraphics[width=0.35\textwidth]{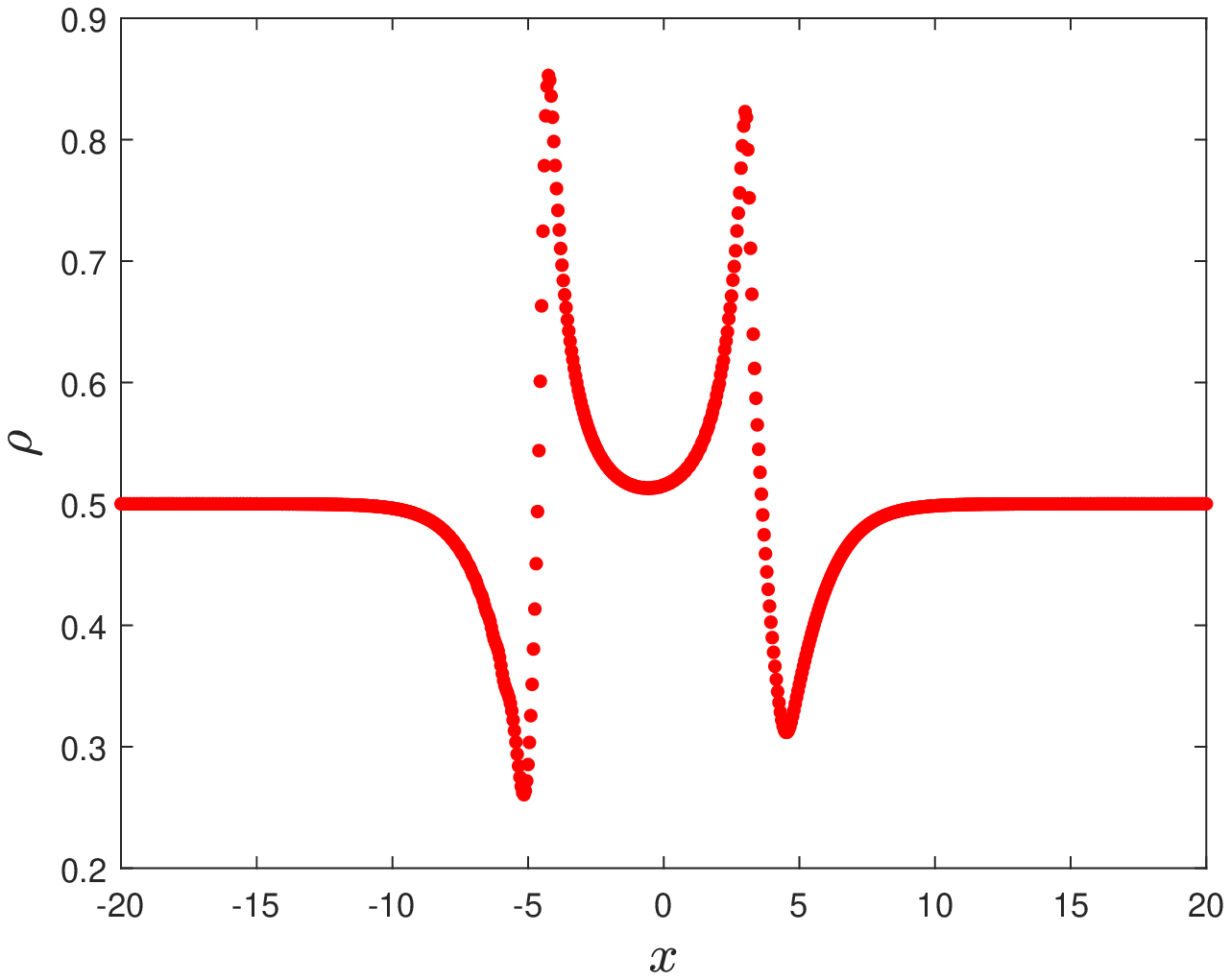}
	}\hspace{-5.3mm}\subfigure[$t=3$]{\centering
		\includegraphics[width=0.35\textwidth]{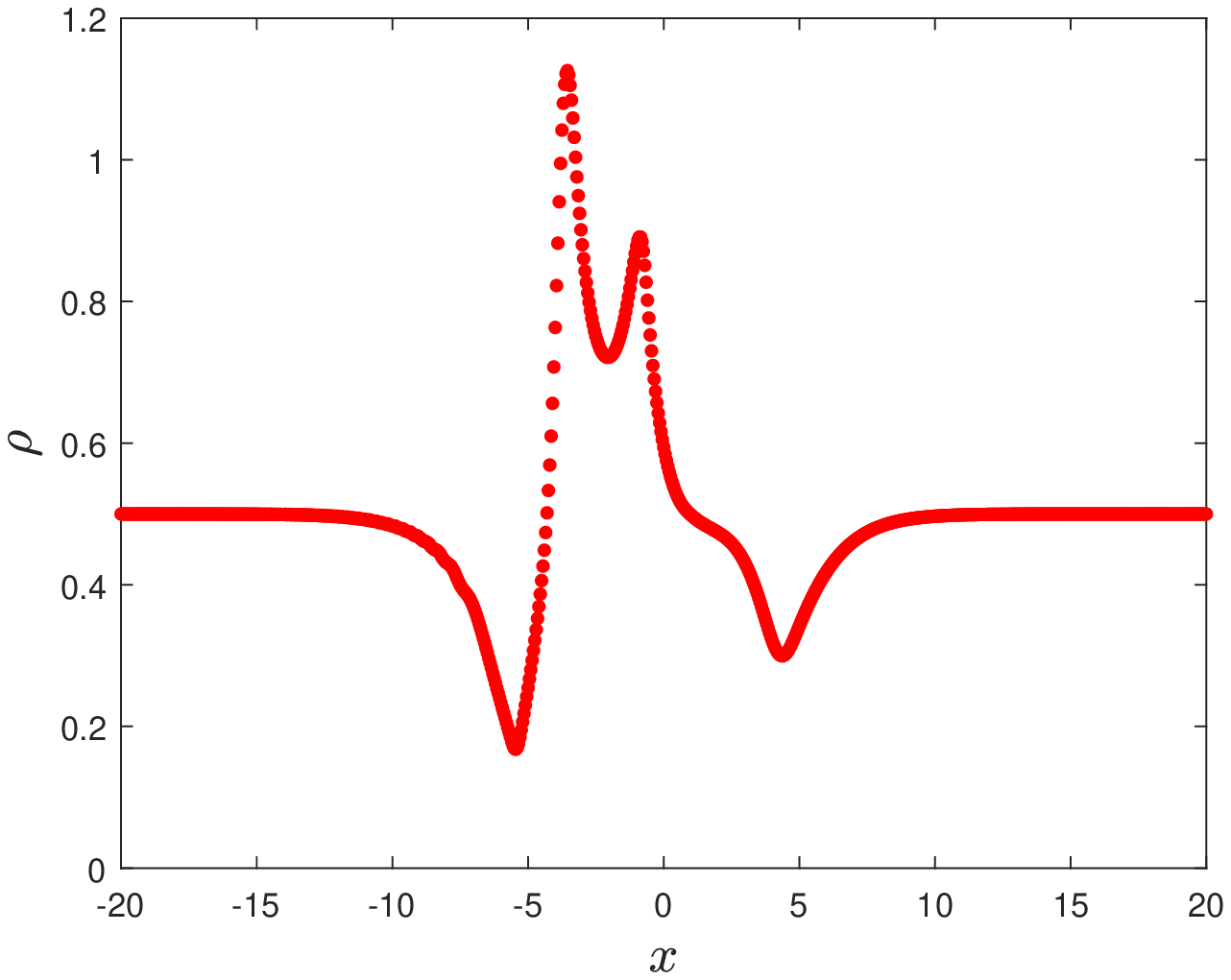}
	}\hspace{-5.3mm}\subfigure[$t=5$]{\centering
		\includegraphics[width=0.35\textwidth]{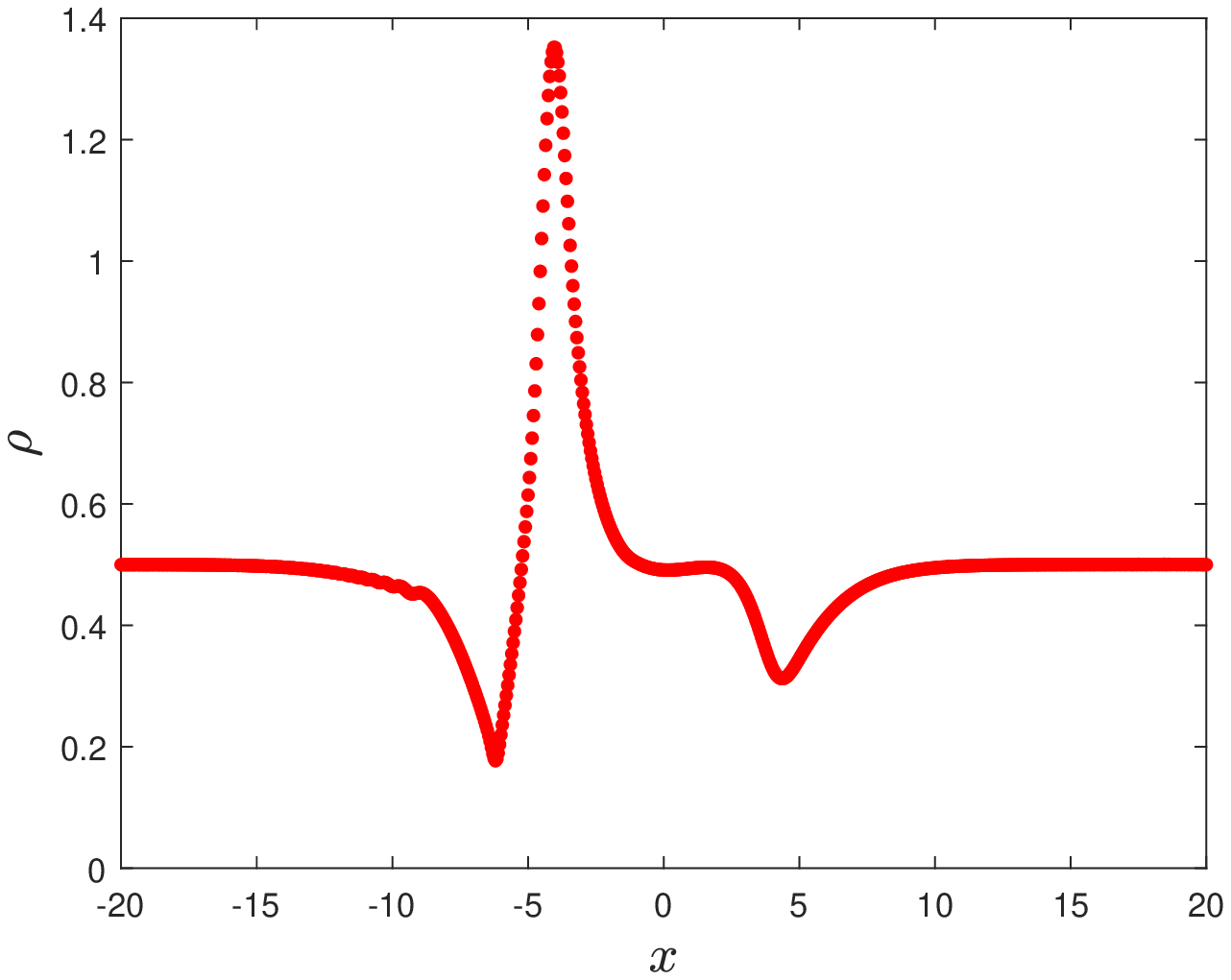}
	} \vspace{-2mm}
	\subfigure[$t=6$]{\centering
		\includegraphics[width=0.35\textwidth]{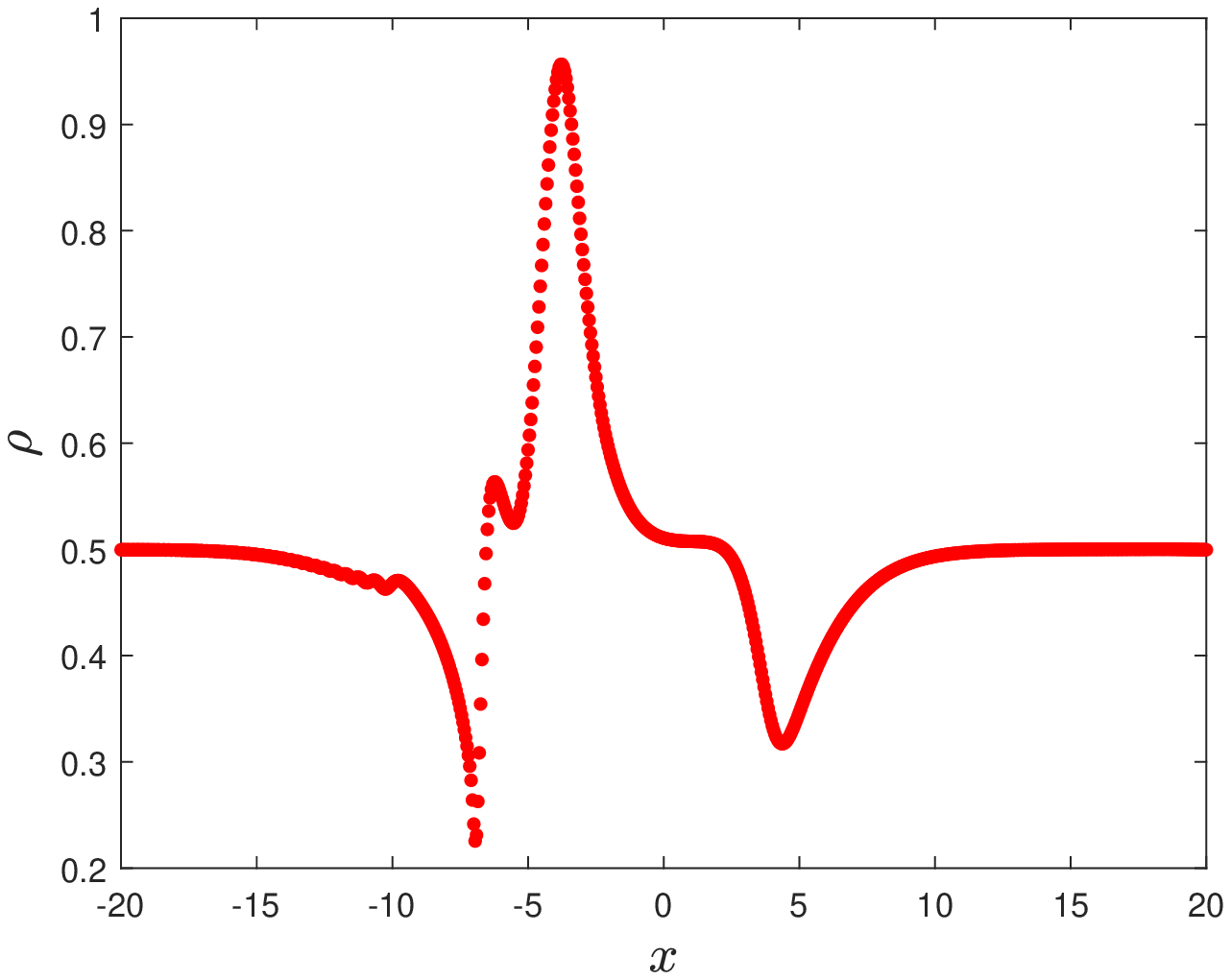}
	}\hspace{-5.3mm}\subfigure[$t=8$]{\centering
		\includegraphics[width=0.35\textwidth]{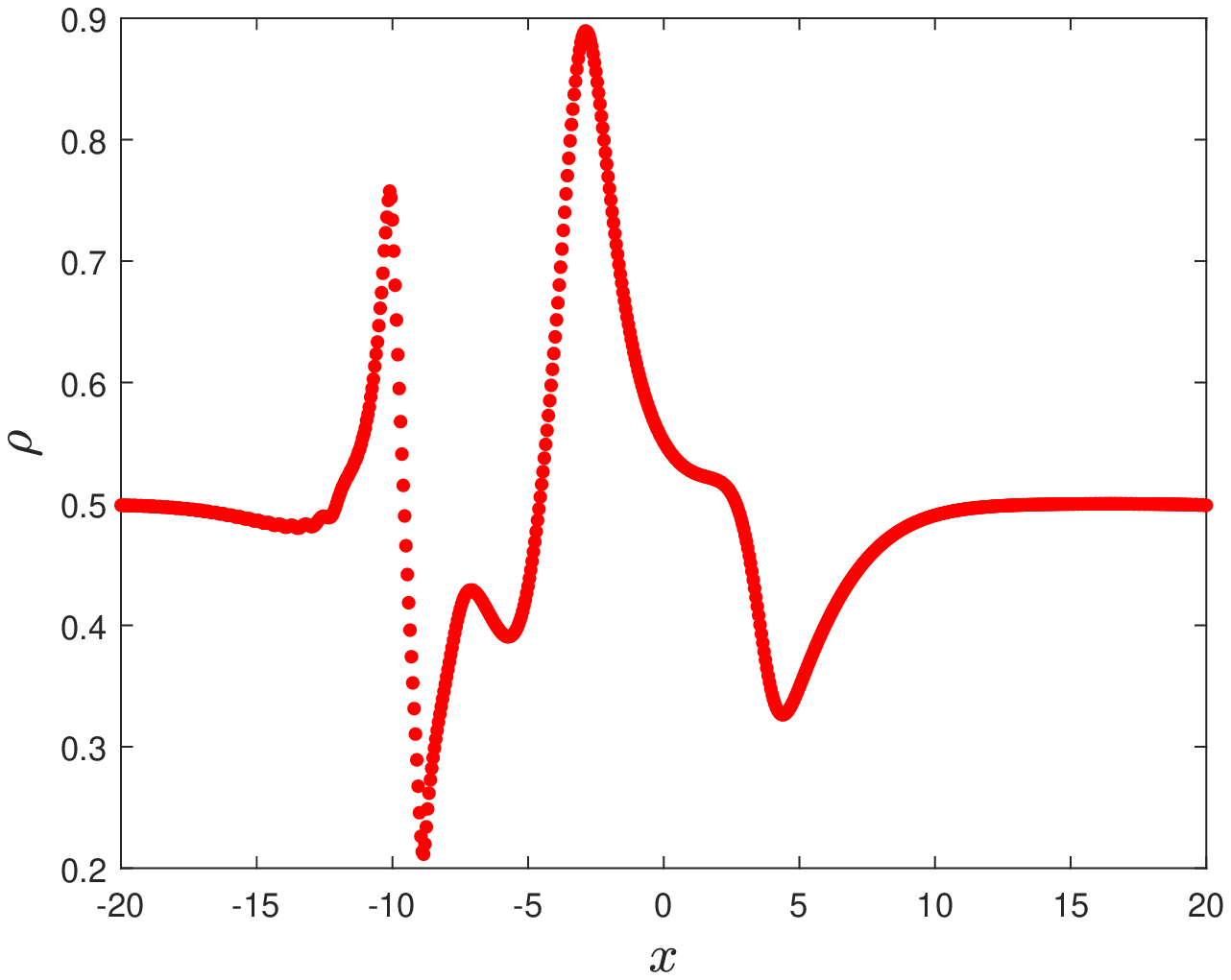}
	}\hspace{-5.3mm}\subfigure[$t=10$]{\centering
		\includegraphics[width=0.35\textwidth]{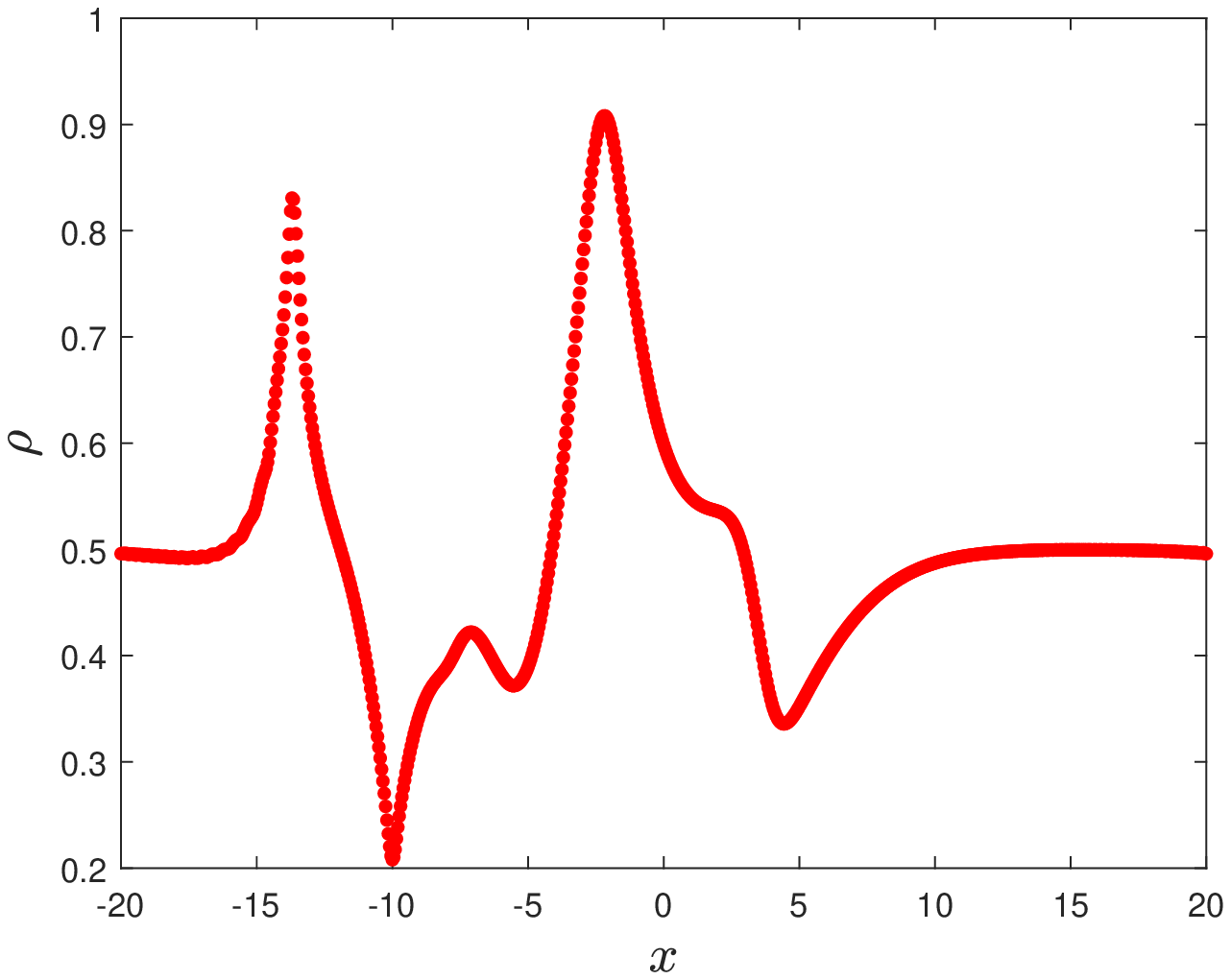}
	} \vspace{-2mm}
	\caption{The heights $\rho(x,t)$ for the R2CH system in \textbf{Case} (\uppercase\expandafter{\romannumeral3}) at different times with stepsizes $h=0.05$ and $\tau = 0.0005$.}  \label{fig19}
\end{figure}

\begin{figure}[htbp]
	\centering
	%	\subfigure[Velocity variable, view(45,70)]{\centering
		%		\includegraphics[width=0.45\textwidth]{MultiPeakon/CaseI_Velocity1.eps}
		%	}
	\subfigure[Velocity variable, view(0,90)]{\centering
		\includegraphics[width=0.4\textwidth]{CaseI_Velocity2.eps}
	}
	%	\subfigure[Height variable, view(45,70)]{\centering
		%		\includegraphics[width=0.45\textwidth]{MultiPeakon/CaseI_Height1.eps}
		%	}
	\subfigure[Height variable, view(0,90)]{\centering
		\includegraphics[width=0.4\textwidth]{CaseI_Height2.eps}
	}
	\caption{The predicted peakon solutions for the R2CH system in \textbf{Case} (\uppercase\expandafter{\romannumeral1}) show the evolution of the velocity $u(x,t)$ and the height $\rho(x,t)$ with $t = 35$. } \label{fig20}
\end{figure}

\begin{figure}[htbp]
	\centering
	%	\subfigure[Velocity variable, view(45,70)]{\centering
		%		\includegraphics[width=0.45\textwidth]{MultiPeakon/CaseIII_Velocity1.eps}
		%	}
	\subfigure[Velocity variable, view(0,90)]{\centering
		\includegraphics[width=0.4\textwidth]{CaseIII_Velocity2.eps}
	}
	%	\subfigure[Height variable, view(45,70)]{\centering
		%		\includegraphics[width=0.45\textwidth]{MultiPeakon/CaseIII_Height1.eps}
		%	}
	\subfigure[Height variable, view(0,90)]{\centering
		\includegraphics[width=0.4\textwidth]{CaseIII_Height2.eps}
	}
	\caption{The predicted peakon solutions for the R2CH system in \textbf{Case} (\uppercase\expandafter{\romannumeral3}) show the
		evolution of the velocity $u(x,t)$ and the height $\rho(x,t)$ with $t = 35$. } \label{fig21}
\end{figure}
\subsection{Part III: the conserved quantity $H$}\label{Sec5.3}
Recalling the conserved quantity $H$ mentioned in the introduction, it can be defined discretely as
\begin{align}
	H^n = \sum_{i=1}^{M}\bigg[(u_i^n)^3 + u_i^n \Big(\frac{u_{i+1}^n - u_{i-1}^n}{2h}\Big)^2 - A(u_i^n)^2 -\mu \Big(\frac{u_{i+1}^n - u_{i-1}^n}{2h}\Big)^2 + u_i^n (\rho_i^n)^2 \bigg],\quad n=1,2,\cdots,N. \label{eq5.3}
\end{align}
We expect that the proposed scheme \eqref{eq3.1}--\eqref{eq3.3}  also can preserve the conserved quantity. Toward this end we conduct a conservation test for $H$ through two different types of numerical examples including a smooth initial data problem and two nonsmooth initial data problem. For the calculation of Example \ref{Exam5.1}, the parameters are selected from \textbf{Case} (\uppercase\expandafter{\romannumeral1}) and \textbf{Case} (\uppercase\expandafter{\romannumeral2}). For the nonsmooth initial data problem, Example \ref{Exam5.4} and Example \ref{Exam5.5} are selected. All the parameters in \textbf{Case} (\uppercase\expandafter{\romannumeral1}) are used. Due to the symmetry of initial values, it immediately obtains that $H^0=0$ according to \eqref{eq5.3}. Figure \ref{fig22} depicts the errors between $H^n$ and $H^0$ at different times, and we can find that $H^n$ can approximate to $H$ numerically.
\begin{figure}[htbp]
	\centering
	\subfigure[$H^n-H^0$ in {\rm Example} \ref{Exam5.1}]{\centering
		\includegraphics[width=0.35\textwidth]{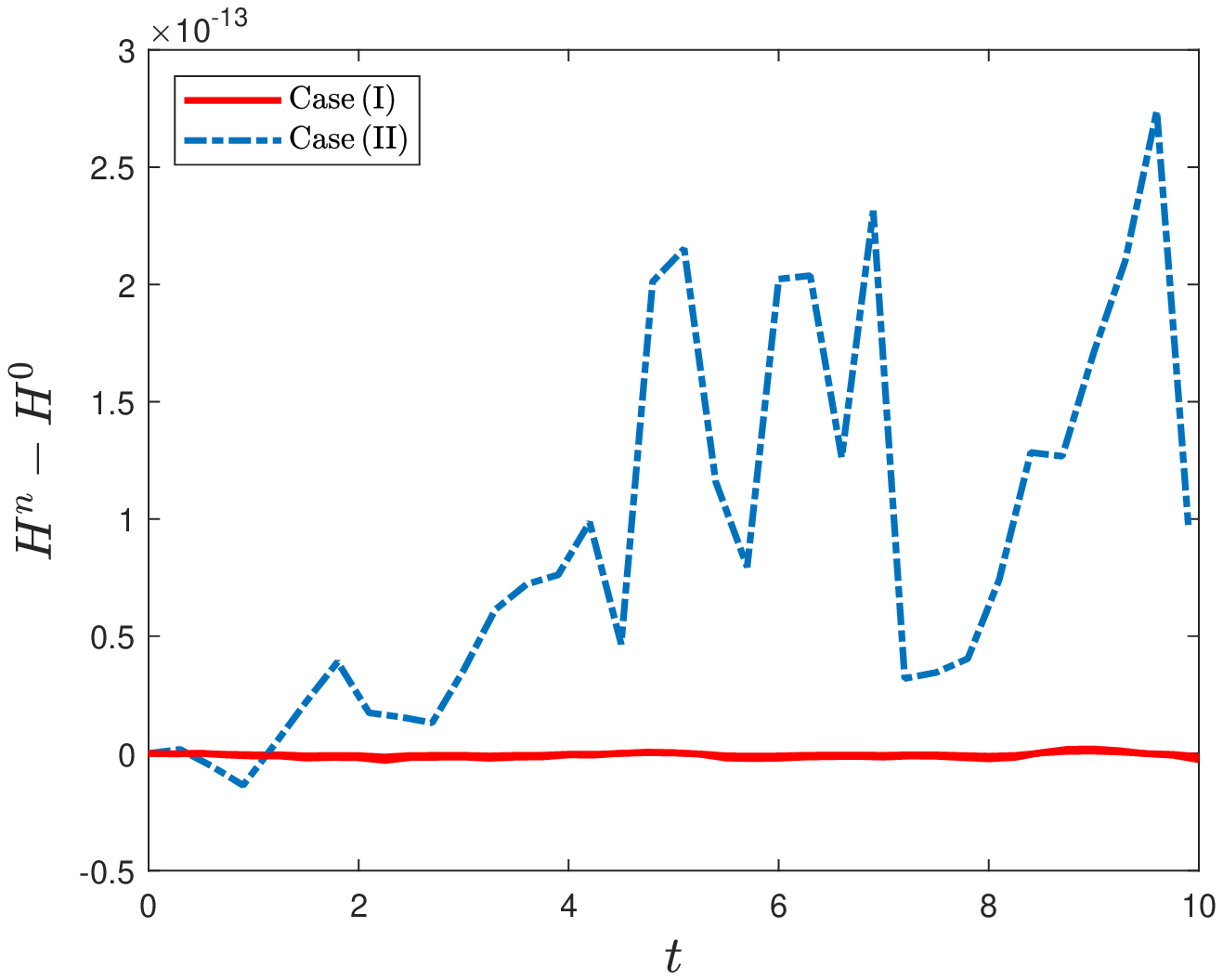}
	}\hspace{-5.3mm}\subfigure[$H^n-H^0$ in {\rm Example} \ref{Exam5.4}]{\centering
		\includegraphics[width=0.35\textwidth]{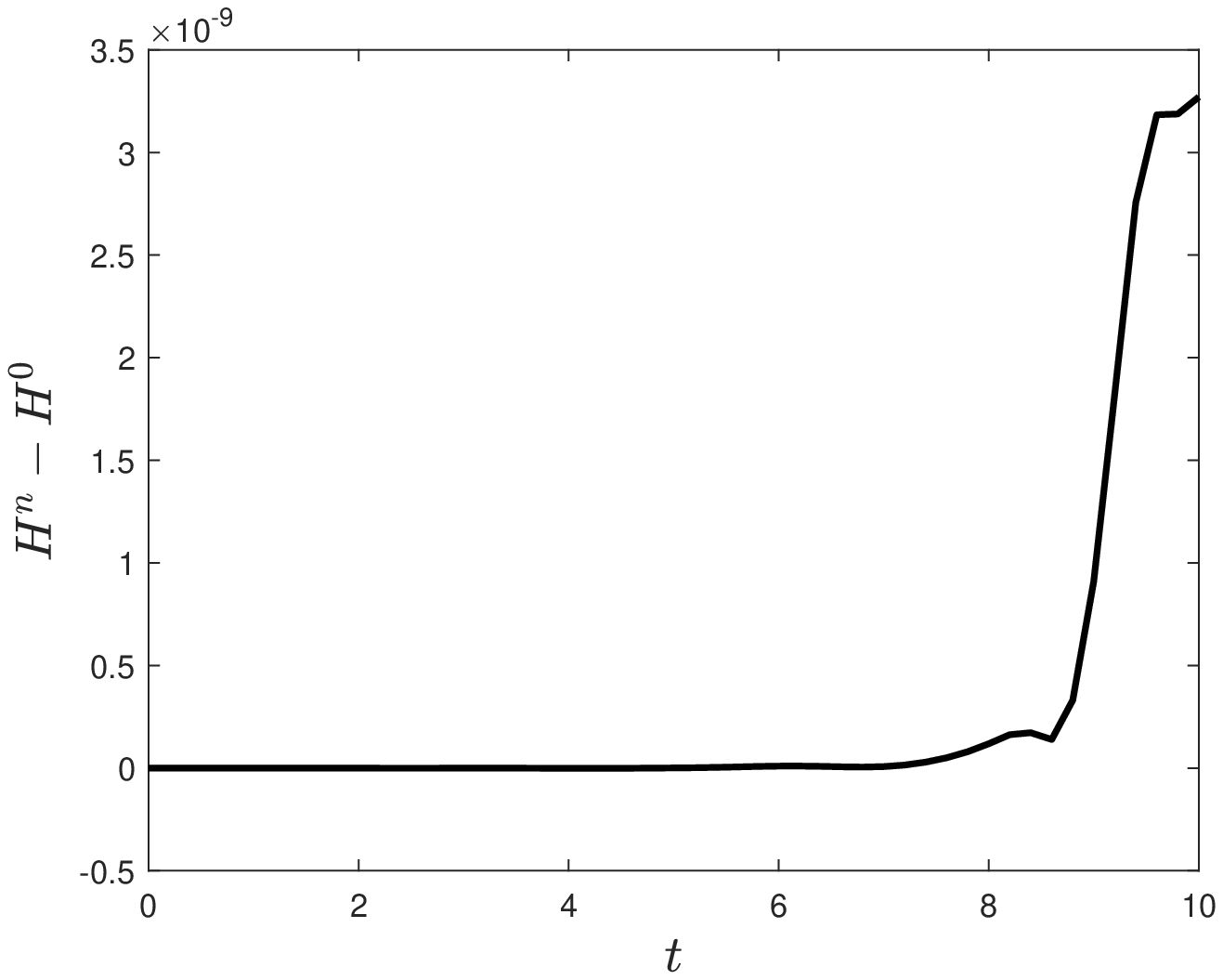}
	}\hspace{-5.3mm}\subfigure[$H^n-H^0$ in {\rm Example} \ref{Exam5.5}]{\centering
		\includegraphics[width=0.35\textwidth]{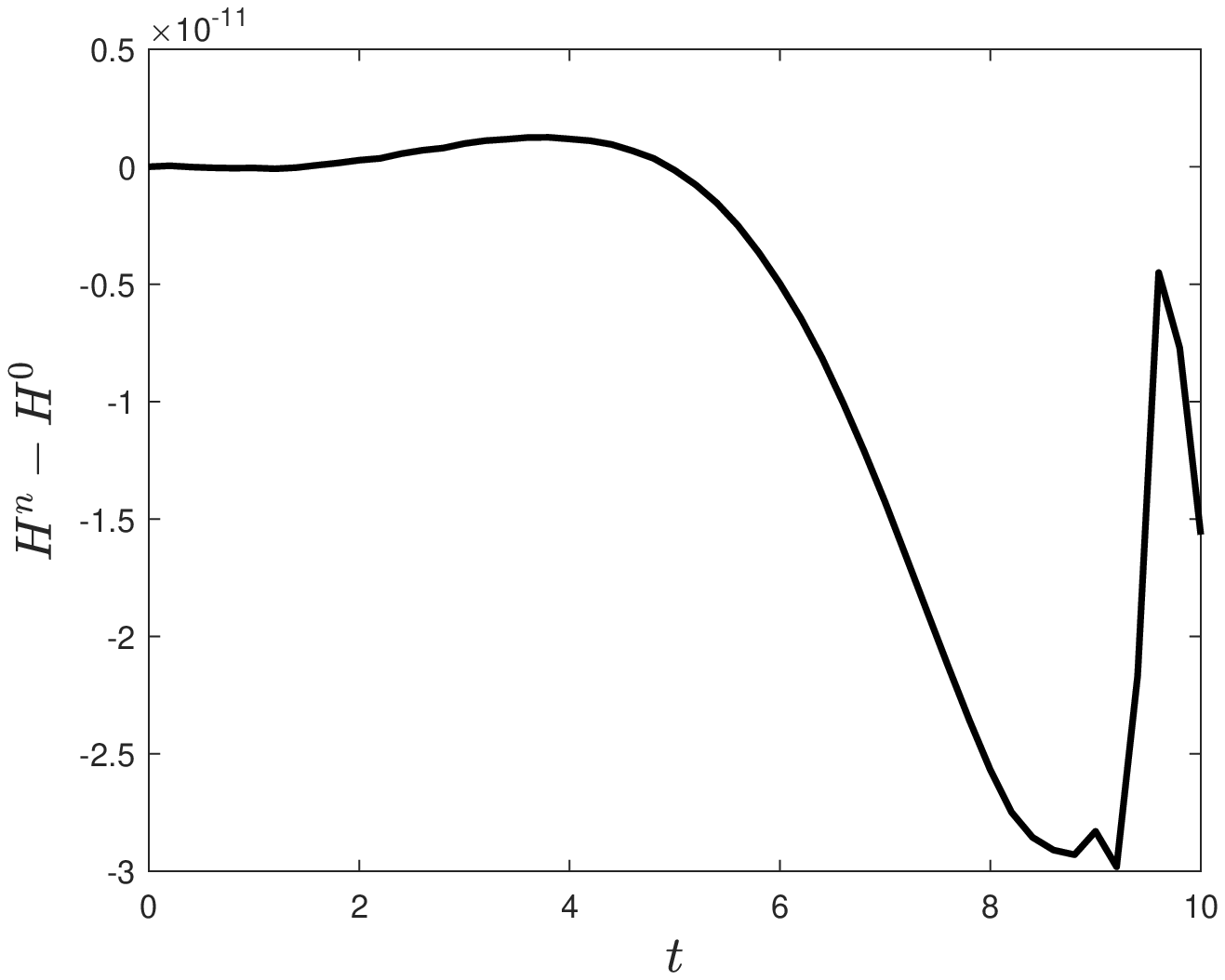}}
	\caption{(a) shows the computed errors of the conserved quantity $H$ with $h=0.5$ and $\tau=0.005$ in \textbf{Case} (\uppercase\expandafter{\romannumeral1}) and \textbf{Case} (\uppercase\expandafter{\romannumeral2})  in subsection \ref{sec5_1} for {\rm Example} \ref{Exam5.1}; (b) and (c) show the computed errors of the conserved quantity $H$ with $h=0.2$ and $\tau=0.0025$ in \textbf{Case} (\uppercase\expandafter{\romannumeral1}) for {\rm Example} \ref{Exam5.4} and {\rm Example} \ref{Exam5.5} in subsection \ref{sec5_2}, respectively. }  \label{fig22}
\end{figure}

\section{Concluding remarks}\label{sec6}
In this paper, we have carried out extensively numerical study on the R2CH system under both the smooth and nonsmooth initial data based on a well-designed conservative finite difference discretization. The evolution of several asymmetric and non-smooth solitary wave solutions are depicted for the first time. The numerical conservation laws enable the calculation schemes to accurately capture the evolution of the smooth/nonsmooth solutions of the benchmark problems. In particular, the numerical threshold technique plays a key role in grasping drastic change of peakon solutions.
Last but surely not the least, there is still plenty of rooms for further study. Especially, the strict error estimate of the difference scheme is a challenging topic, which is not covered in this paper.

%In addition, when the fluid velocity is more than $\frac{1}{2\Omega}-A$, the theoretical result has showed that there
%exists kink waves for the R2CH system, see e.g., \cite{ZW2021}. However, due to the limited capacity of numerical methods,
%it does not depict the portrait of such waves numerically at present. More accurate and robust
%numerical methods need to be further developed.
%\begin{itemize}
% \item The strict error estimate of the difference scheme is a challenging topic, which is not covered in this paper;
%  \item A tentative attempt to simulate a third-order discrete conservation law is carried out in the last example,
%however, a strict theoretical explanation on the discrete conservation law is lacking and needs further discussion;
%  \item When the fluid velocity is more than $\frac{1}{2\Omega}-A$, the theoretical result has showed that there
%exists kink waves for the R2CH system, see e.g., \cite{ZW2021}. However, due to the limited capacity of numerical methods,
%it does not depict the portrait of such waves numerically at present. Developing more accurate and robust
%numerical methods will be the future work.
%\end{itemize}
%\clearpage
\section*{Appendix}
%\appendix
%\appendixpage
\setcounter{equation}{0}
\renewcommand\theequation{A.\arabic{equation}} %?A.1?
Noticing when $\sigma=1$ and $\Omega=0$, the system \eqref{eq1.1}--\eqref{eq1.2} can be written as a system of hyperbolic type
\bcase
u_t + uu_x + \partial_x G * \Big(u^2 + \frac{1}{2}u_x^2 - Au + \mu u_{xx} + \frac{1}{2}\rho^2\Big) = 0, \label{eqA.1a}  \\
\rho_t + u\rho_x = - u_x\rho, \quad x \in R, t > 0,   \label{eqA.1b}
\ecase
then we show the system above has the conservation law
\begin{align*}
	H = \int_{R} (u^3 + uu_{x}^2 - Au^2 - \mu u_{x}^2 + u \rho^2 ) {\rm d} x.   % \label{eq6.2}
\end{align*}
\begin{proof}
	For simplicity, we set
	\begin{align}
		g(x) = G*\Big(u^2 + \frac{1}{2}u_x^2 - Au + \mu u_{xx}\Big), \quad h(x) = G*\Big(\frac{1}{2}\rho^2\Big).  \label{eqA.3}
	\end{align}
	Then \eqref{eqA.1a} takes the equivalent form
	\begin{align}
		u_t + uu_x + g_x + h_x = 0.  \label{eqA.4}
	\end{align}
	Using \eqref{eqA.4} and noticing
	$$\int_{R}u\rho\rho_t\mathrm{d}x = -\int_{R}(u\rho)(u\rho)_x\mathrm{d}x = 0,$$
	we have
	\begin{align}
		&\frac{{\rm d}}{{\rm d}t} \int_{R} (u^3 + uu_{x}^2 - Au^2 - \mu u_{x}^2 + u \rho^2 ) \mathrm{d}x  \notag \\
		&\quad = \int_{R}(3u^2u_t + u_tu_{x}^2 + 2uu_xu_{xt} -2Auu_t -2\mu u_xu_{xt} + u_t \rho^2 ) \mathrm{d}x  \notag \\
		&\quad = \int_{R}(3u^2 + u_x^2 -2Au)u_t \mathrm{d}x + \int_{R}(2uu_x-2\mu u_x)u_{xt}\mathrm{d}x + \int_{R}u_t\rho^2\mathrm{d}x \notag \\
		&\quad = -\int_{R}(3u^2 + u_x^2 -2Au)(uu_x + g_x + h_x) \mathrm{d}x  \notag\\
		&\qquad\, -\int_{R}(2uu_x-2\mu u_x)(u_x^2 + uu_{xx}+g_{xx}+h_{xx})\mathrm{d}x \notag\\
		&\qquad\, -\int_{R}(uu_x+g_x+h_x)\rho^2\mathrm{d}x  \notag\\
		&\quad \triangleq   K_1 + K_2 + K_3.   \label{eqA.5}
	\end{align}
	Below, we calculate each term at the right hand of \eqref{eqA.5}. First, it easily obtains that
	\begin{align*}
		K_1
		&= -\int_{R}(3u^2 + u_x^2 -2Au)(uu_x + g_x + h_x) \mathrm{d}x \notag\\
		&= -\int_{R}uu_x^3\mathrm{d}x - \int_{R}(3u^2+u_x^2-2Au)g_x\mathrm{d}x - \int_{R} (3u^2+u_x^2-2Au)h_x\mathrm{d}x.
	\end{align*}
	Using \eqref{eqA.3}, we have
	\begin{align*}
		K_2
		&= -\int_{R}(2uu_x-2\mu u_x)(u_x^2 + uu_{xx}+g_{xx}+h_{xx})\mathrm{d}x \notag\\
		&= - \int_{R}(2uu_x^3 + 2u^2u_xu_{xx})\mathrm{d}x + \int_{R}(2\mu u_x^3 + 2\mu uu_xu_{xx})\mathrm{d}x  \notag\\
		&\quad\, - \int_{R}(2uu_x - 2\mu u_x)\Big(g-u^2-\frac{1}{2}u_x^2+Au-\mu u_{xx}\Big)\mathrm{d}x \notag\\
		&\quad\,- \int_{R}(2uu_x - 2\mu u_x)\Big(h-\frac{1}{2}\rho^2\Big)\mathrm{d}x \notag\\
		&=-\int_{R}2uu_xg\mathrm{d}x + \int_{R}uu_x^3\mathrm{d}x + \int_{R}2\mu u_xg\mathrm{d}x - \int_{R}\mu u_x^3\mathrm{d}x  \notag\\
		&\quad\; - \int_{R}2uu_xh\mathrm{d}x + \int_{R}uu_x\rho^2\mathrm{d}x + \int_{R}2\mu u_xh\mathrm{d}x - \int_{R}\mu u_x\rho^2\mathrm{d}x.
	\end{align*}
	Similarly, we have
	\begin{align*}
		K_3
		= -\int_{R}\rho^2(uu_x + g_x + h_x) \mathrm{d}x
		= -\int_{R}uu_x\rho^2\mathrm{d}x - \int_{R}2(h-h_{xx})g_x\mathrm{d}x.
	\end{align*}
	Adding $K_1$, $K_2$ and $K_3$ together, we obtain
	\begin{align*}
		\frac{\mathrm{d}H}{\mathrm{d}t}
		&= -\int_{R}(2u^2 + u_x^2 -2Au + 2\mu u_{xx})g_x\mathrm{d}x + \int_{R}2\mu u_{xx}g_x\mathrm{d}x + \int_{R} 2\mu u_x g\mathrm{d}x  \notag\\
		&\quad\, -\int_{R}(2u^2+u_x^2-2Au+2\mu u_{xx})h_x\mathrm{d}x  + \int_{R}2\mu u_{xx}h_x\mathrm{d}x + \int_{R}2\mu u_xh\mathrm{d}x \notag\\
		&\quad\, -\int_{R}\mu u_x^3\mathrm{d}x - \int_{R}2\mu u_x(h-h_{xx})\mathrm{d}x  - \int_{R}2(h-h_{xx})g_x\mathrm{d}x \notag\\
		&= -\int_{R}2(g-g_{xx})g_x\mathrm{d}x - \int_{R}2\mu u_xg_{xx}\mathrm{d}x + \int_{R}2\mu u_xg\mathrm{d}x \notag\\
		&\quad\, -\int_{R}2(g-g_{xx})h_x\mathrm{d}x -\int_{R}2\mu u_xh_{xx} \mathrm{d}x + \int_{R}2\mu u_x h\mathrm{d}x\notag\\
		&\quad\, -\int_{R}\mu u_x^3\mathrm{d}x - \int_{R}2\mu u_x(h-h_{xx})\mathrm{d}x  - \int_{R}2(h-h_{xx})g_x\mathrm{d}x \notag\\
		&= - \int_{R}2\mu u_xg_{xx}\mathrm{d}x + \int_{R}2\mu u_xg\mathrm{d}x -\int_{R}\mu u_x^3\mathrm{d}x  \notag\\
		&= -\int_{R} 2\mu u_x\Big(g-u^2-\frac{1}{2}u_x^2 + Au-\mu u_{xx}\Big)\mathrm{d}x+ \int_{R}2\mu u_xg\mathrm{d}x -\int_{R}\mu u_x^3\mathrm{d}x \notag\\
		&= 0,
	\end{align*}
	in which
	\begin{align*}
		-\int_{R}u^2u_xu_{xx}\mathrm{d}x =  \int_{R} uu_x^3 \mathrm{d}x \quad \text{and} \quad
		-\int_{R}2\mu uu_xu_{xx}\mathrm{d}x = \int_{R}\mu u_x^3\mathrm{d}x
	\end{align*}
	have been used several times during the calculation.
\end{proof}

%%%% Acknowledgments %%%%%%%%
%\section*{Acknowledgments}
%The author would like to thank  ....

%%%% Bibliography  %%%%%%%%%%

\end{document}